\newcommand{\Z}{\ensuremath{\mathbb{Z}}}
\newcommand{\Q}{\ensuremath{\mathbb{Q}}}
\newcommand{\F}{\ensuremath{\mathbb{F}}}
\theoremstyle{plain}
\newtheorem{theorem}{Theorem}[section]
\newtheorem{lemma}[theorem]{Lemma}
\newtheorem{corollary}[theorem]{Corollary}
\newtheorem{proposition}[theorem]{Proposition}
\theoremstyle{definition}
\newtheorem{definition}[theorem]{Definition}
\newtheorem{remark}[theorem]{Remark}
\DeclareMathOperator{\Spec}{Spec}
\DeclareMathOperator{\Pic}{Pic}
\DeclareMathOperator{\ord}{ord}
\DeclareMathOperator{\divisor}{div}
\DeclareMathOperator{\Sym}{Sym}
\DeclareMathOperator{\degi}{ideg}
\DeclareMathOperator{\degs}{sdeg}
\DeclareMathOperator{\Jac}{Jac}
\DeclareMathOperator{\rk}{rk}
\DeclareMathOperator{\car}{char}
\DeclareMathOperator{\Norm}{N}
\newcommand{\PP}{\mathbb{P}}
\newcommand{\E}{\mathcal{E}}
\newcommand{\LN}{\mathrm{LN}}
\newcommand{\Tr}{\mathrm{Tr}}
\begin{document}

\title{Counting rational points on elliptic and hyperelliptic curves over function fields}

\author{Jean Gillibert \and Emmanuel Hallouin \and Aaron Levin}

\date{October 2025}

\maketitle

\begin{abstract}
Combining $2$-descent techniques with Riemann-Roch and B{\'e}zout's theorems, we give an upper bound on the number of rational points of bounded height on elliptic and hyperelliptic curves over function fields of characteristic $\neq 2$. We deduce an upper bound on the number of $S$-integral points, where $S$ is a finite set of places. As a primary application, over small finite fields we bound the $3$-torsion of Jacobians of hyperelliptic curves and the $2$-torsion of Jacobians of trigonal curves. In this setting, these bounds improve on both the trivial geometric bound and the naive inequality coming from the Weil bound, as well as recent upper bounds on $2$-torsion in the work of Bhargava \emph{et al.}.
\end{abstract}


\section{Introduction}

Let $k$ be an algebraically closed field of characteristic not $2$, and let $B$ be a smooth projective irreducible $k$-curve of genus $g$.
In this paper, we consider a hyperelliptic curve $\mathcal{C}$ over $k(B)$ defined by a Weierstrass equation of the form
\begin{equation}
\label{eq:C}
y^2 = f(x),
\end{equation}
where $f$ is a monic separable polynomial of odd degree $d\geq 3$ with coefficients in $k(B)$.

We shall use $2$-descent computations to study rational points of bounded naive height on this Weierstrass model of $\mathcal{C}$, that is, the set
\begin{align*}
\mathcal{C}(k(B))_{\leq c}:=\{(x_0,y_0)\in \mathcal{C}(k(B))\mid \deg x_0\leq c\}.
\end{align*}
It is known that this set is finite under each of the following assumptions (see \S{}\ref{sec:heights}):

\begin{enumerate}
\item[(A1)] $\mathcal{C}$ is a non-constant elliptic curve
\item[(A2)] $\car(k)=0$ and $\mathcal{C}$ is non-constant
\item[(A3)] $\car(k)>0$ and $\mathcal{C}$ is not isotrivial.
\end{enumerate}

Recall that $\mathcal{C}$ is \emph{constant} if one can obtain it by base-change from a curve defined over $k$, and that $\mathcal{C}$ is \emph{isotrivial} if it becomes constant over some finite extension of $k(B)$.

In fact, when $\mathcal{C}$ has genus at least $2$ then the whole set of rational points of $\mathcal{C}$ is known to be finite under (A2) or (A3) (this is the function field version of Mordell's conjecture).

\subsection*{Counting rational points}

Our main result is the following:

\begin{theorem}
\label{thm:intro1}
Let $\mathcal{C}$ be the hyperelliptic curve defined over $k(B)$ by the equation $y^2=f(x)$ where $f$ is monic and separable, of odd degree $d\geq 3$. Let $h_B(f)$ be the height of $f$ (see \S{}\ref{sec:heights}). Let $C_f$ be the smooth projective curve defined over $k$ by the equation $f(x)=0$, and let $\omega(f)$ be the number of irreducible components of $C_f$. If $C_f$ is irreducible, we denote its genus by $g_f$. Finally, let $J$ be the Jacobian of $\mathcal{C}$ over $k(B)$, let $\Tr_{k(B)/k}(J)$ be the $k(B)/k$-trace of $J$, and let
$$
\LN(J) := J(k(B))/\Tr_{k(B)/k}(J)(k)
$$
be the Lang-N{\'e}ron group of $J$ relative to $k(B)/k$ (which, according to the Lang-N{\'e}ron theorem, is finitely generated).
If $c$ is a positive integer, we let
$$
\Omega(c,f,g) :=
\begin{cases}
 \max\left\{d(c+g)+h_B(f)-g_f, \frac{1}{2}(d(c+g)+h_B(f))\right\} & \text{if $C_f$ is irreducible} \\
 d(c+g)+h_B(f) + \omega(f) -1 & \text{otherwise.}
\end{cases}
$$
Assume that one of the assumptions (A1), (A2) or (A3) holds. Then
\begin{enumerate}
\item[(1)] For any positive integer $c$ we have
\begin{align*}
|\mathcal{C}(k(B))_{\leq c}| \leq 2^{\Omega(c,f,g) + \max\{c, \frac{1}{2}(c+g)\} +1 + \rk_\Z \LN(J) + \dim_{\F_2} \LN(J)[2]}.
\end{align*}
\item[(2)] When the base field is $k(t)=k(\mathbb{P}^1)$, and $f$ has coefficients in $k[t]$, one can improve this as follows:
\begin{align*}
|\mathcal{C}(k(B))_{\leq c}| \leq 2^{\Omega\left(\left\lceil\frac{c}{2}\right\rceil,f,0\right)
+ \left\lceil\frac{c}{2}\right\rceil +1 + \rk_\Z \LN(J) + \dim_{\F_2} \LN(J)[2]}.
\end{align*}
\end{enumerate}
\end{theorem}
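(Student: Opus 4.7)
The natural tool is $2$-descent. Set $L:=k(B)[x]/(f(x))\cong\prod_i k(C_{f,i})$, an \'etale $k(B)$-algebra, and let $\alpha\in L$ denote the class of $x$. Standard $2$-descent yields an injective homomorphism $\delta\colon J(k(B))/2J(k(B))\hookrightarrow L^*/L^{*2}$ landing in $\ker(N_{L/k(B)})$; on affine points it sends $(x_0,y_0)\mapsto x_0-\alpha$ via the Abel-Jacobi embedding $\mathcal{C}(k(B))\hookrightarrow J(k(B))$. The plan is to decompose
\[
|\mathcal{C}(k(B))_{\leq c}|\leq\bigl|J(k(B))/2J(k(B))\bigr|\cdot\max_{\xi}\bigl|\delta^{-1}(\xi)\cap\mathcal{C}(k(B))_{\leq c}\bigr|
\]
and bound each factor separately.

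The first factor is pure Lang-N\'eron: since $k$ is algebraically closed, $\Tr_{k(B)/k}(J)(k)$ is $2$-divisible (as an abelian variety over an algebraically closed field), hence the natural map $\LN(J)/2\LN(J)\to J(k(B))/2J(k(B))$ is an isomorphism and the latter group has cardinality $2^{\rk_\Z\LN(J)+\dim_{\F_2}\LN(J)[2]}$.

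For the second factor, translating by any fixed preimage reduces to bounding $|\delta^{-1}(0)\cap\mathcal{C}(k(B))_{\leq c}|$, i.e., pairs $(x_0,y_0)$ with $x_0-\alpha=s^2$ in $L$ for some $s\in L^*$. Treating $s$ as a rational function on $C_f$, the constraint $\deg x_0\leq c$ together with the height $h_B(f)$ forces the divisor of $s$ on $C_f$ to have degree at most $\tfrac12(d(c+g)+h_B(f))$ up to genus corrections. Riemann-Roch on $C_f$ (irreducible case; the two branches of $\Omega$ correspond to trading between an effective-divisor count that saves $g_f$ and a direct $\Pic/2\Pic$ count that keeps the $\tfrac12$-degree bound) or on each irreducible component combined with B\'ezout applied to the intersection $\{x=x_0\}\cap C_f$ (reducible case, producing the $\omega(f)-1$ correction) bounds the number of admissible $s$-classes modulo squares by $2^{\Omega(c,f,g)}$. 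Then the $k(B)$-linearity condition $\alpha+s^2\in k(B)$ pins down $x_0$ inside a Riemann-Roch space on $B$ of dimension at most $\max\{c,(c+g)/2\}+1$; once $x_0$ is known, $y_0$ is determined up to the hyperelliptic involution (absorbed in the ``$+1$''). This yields the kernel bound $2^{\Omega(c,f,g)+\max\{c,(c+g)/2\}+1}$, and multiplying the two factors gives part (1). Part (2) follows by rerunning the argument on $B=\mathbb{P}^1$ with $x_0=P/Q$ for $P,Q\in k[t]$ of degree $\leq\lceil c/2\rceil$, which effectively halves the $B$-input and sets the base-genus contribution to $0$.

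The main obstacle I foresee is the Riemann-Roch and B\'ezout bookkeeping for the kernel in the reducible case of $C_f$: coordinating the divisors of $s$ on each component $C_{f,i}$ with the $k(B)$-linearity constraint on $\alpha+s^2$, and absorbing the intersection correction $\omega(f)-1$ into the single clean exponent $\Omega(c,f,g)$, requires careful degree tracking. A secondary subtlety is cleanly separating the ``$s$-degree'' contribution $2^\Omega$ from the ``$x_0$-dimension'' contribution $2^{\max\{c,(c+g)/2\}+1}$, which rests on the geometric fact that the squaring map $L\to L$ restricts to a controlled finite cover above the one-dimensional affine subspace $k(B)-\alpha\subset L$.
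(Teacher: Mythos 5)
Your high-level skeleton matches the paper's: factor the count as (size of the image of the $2$-descent map) $\times$ (maximal fiber size), bound the image by $2^{\rk_\Z\LN(J)+\dim_{\F_2}\LN(J)[2]}$ using $2$-divisibility of $\Tr_{k(B)/k}(J)(k)$ and the snake lemma, and bound the fiber via Riemann--Roch on $C_f$. The image bound is correct and is exactly the paper's argument. The fiber bound, however, contains two genuine gaps. First, the reduction ``translating by any fixed preimage reduces to bounding $|\delta^{-1}(0)\cap\mathcal{C}(k(B))_{\leq c}|$'' does not work: $\delta$ is a homomorphism on $J(k(B))$, not on $\mathcal{C}(k(B))$, and no translation in $L^*/L^{*2}$ carries the affine subspace $k(B)-\alpha$ to itself while preserving the height constraint. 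The fiber over the class of $x_0-\alpha$ consists of the $x_1$ with $(x_1-\alpha)=(x_0-\alpha)s^2$, and one must bound this set directly (as the paper does by forming $\psi_0\psi_1=\phi^2$ for two points in the same fiber), not the set with $x_1-\alpha=s^2$.

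Second, and more seriously, the step ``Riemann--Roch \dots bounds the number of admissible $s$-classes modulo squares by $2^{\Omega(c,f,g)}$'' is not a valid count: over an algebraically closed field the relevant Riemann--Roch space is an infinite set, and finiteness comes only from the simultaneous requirement that $(x_0-\alpha)s^2+\alpha$ be a function pulled back from $B$ of bounded degree. The paper's device for converting the dimension $n=h^0(E_0)-1$ into the exponent is geometric: the functions $\phi^2/\psi_0$ sweep out the image $W$ of a $2$-uple Veronese (of degree $2^n$) under a linear projection inside $\mathbb{P}(L(2(c+g)D_0+2D_\infty))$, the candidate functions $u_1-v_1^2x$ sweep out a second variety $W'$ of degree at most $2^{n'}$ with $n'=h^0((c+g)P_0)-1$, and a refined B\'ezout theorem bounds the number of irreducible components of $W\cap W'$ by $2^{n+n'}$, each component accounting for at most one $x_1$. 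This is precisely the mechanism by which the two contributions $2^{\Omega}$ and $2^{\max\{c,(c+g)/2\}}$ multiply, and it is absent from your sketch (you flag it yourself as the ``secondary subtlety,'' but it is the core of the proof). Finally, the reducible case is not handled by intersecting $\{x=x_0\}$ with $C_f$: the paper uses $h^0(E_0)\leq\deg E_0+\omega(f)$ for effective divisors on a reducible curve, and then recovers the $-1$ in $\omega(f)-1$ by replacing $\delta$ with a modified descent map valued in $H^1(C_\varphi\setminus D_2',\mu_2)$ after splitting off a rational root of $f$; the two branches of $\Omega$ in the irreducible case come from Riemann--Roch versus Clifford's theorem applied to $E_0$, not from a ``$\Pic/2\Pic$ count.''
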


It follows from the Grothendieck-Ogg-Shafarevich formula (see \S{}\ref{sec:intro1proof}) that
\begin{equation}
\label{eq:GOSpimp}
\rk_\Z \LN(J) + \dim_{\F_2} \LN(J)[2] \leq 2d_0 +(d-1)(2g-2)+ \deg(\mathfrak{f}_J)+\omega(f) -1
\end{equation}
where $d_0$ is the dimension of $\Tr_{k(B)/k}(J)$, and $\mathfrak{f}_J$ is the conductor of $J$. This allows one to deduce from the statements above an upper bound in terms of more computable invariants of $J$.

Alternatively, one can get a bound in terms of $\mathcal{C}$ by observing that $\deg(\mathfrak{f}_J)\leq \deg(\mathfrak{f}_\mathcal{C})$ where $\mathfrak{f}_\mathcal{C}$ is the Artin conductor of $\mathcal{C}$.

Note that $\omega(f)$ is none other than the number of irreducible factors of $f$ as a polynomial over $k(B)$, and that $\dim_{\F_2} J(k(B))[2] = \omega(f)-1$.

In the case $d=3$, our curve $\mathcal{C}$ is (under the assumptions of Theorem~\ref{thm:intro1}) a non-constant elliptic curve, hence its $k(B)/k$-trace vanishes. This means that $d_0=0$ and that the Lang-N{\'e}ron group agrees with the group of $k(B)$-rational points of $\mathcal{C}$.
In this case, it is a classical result of N{\'e}ron that $|\mathcal{C}(k(B))_{\leq c}|\sim \beta\cdot c^{r/2}$ as $c$ tends to infinity, where $r$ is the rank of $\mathcal{C}(k(B))$ and $\beta$ is a constant depending on $\mathcal{C}$. Although the bounds in Theorem~\ref{thm:intro1} are asymptotically weaker in comparison, the key point is that they give an explicit estimate of the number of points of small height; the precise form of our bounds will be crucial in the applications in Section \ref{section4} to bounding the $3$-torsion of hyperelliptic Jacobians over finite fields.

The strategy of our proof is mainly geometric: it relies on a counting argument for the number of points of bounded height which map to the same class under the $2$-descent map. This amounts to counting functions in certain linear systems, the main tool being the Riemann-Roch theorem. One of its strengths is that it is characteristic-free. One weakness is its geometric nature: one cannot expect an improvement when the base field is not algebraically closed. The end of the proof is classical: we bound the size of the image of the $2$-descent map in terms of the rank and the size of the $2$-torsion subgroup. This part is sensitive to the base field, and indeed in our primary application we exploit this by taking advantage of an upper bound on the rank of an elliptic curve over $\F_q(t)$ due to Brumer.

\subsection*{Counting integral points}

We now choose a finite non-empty set $S\subset B$ and denote by $R_S\subset k(B)$ the ring of functions with no poles outside $S$. We assume that $f$ has coefficients in $R_S$.
We are now interested in the set of $S$-integral points on the given Weierstrass model of $\mathcal{C}$, namely
\begin{align*}
\mathcal{C}(R_S):=\{(x_0,y_0)\in \mathcal{C}(k(B))\mid x_0,y_0\in R_S\}.
\end{align*}

Under assumption (A2) or (A3) this set is finite. More precisely, when $\mathcal{C}$ has genus $1$ then the set of $S$-integral points is known to be finite by results of Lang \cite{lang1960} when $\car(k)=0$, and Voloch \cite{voloch90} when $\car(k)>0$; when $\mathcal{C}$ has genus at least two then the set of rational points is finite. See \S{}\ref{sec:heights} for the details.

In $\S{}\ref{sec:intpointsheightbound}$ we give an upper bound on the height of $S$-integral points on $\mathcal{C}$, following previous work of Hindry-Silverman. This allows us to derive from Theorem~\ref{thm:intro1} an upper bound on the number of $S$-integral points, which reads as follows:

\begin{theorem}
\label{thm:intro2}
With the same notation as in Theorem~\ref{thm:intro1}, assume (A2) holds, or (A3) with $\car(k)>d$. Assume that $f$ has coefficients in $R_S$, and let $\Delta_f$ be the discriminant of $f$. Let $\rho$ be the inseparable degree defined in Theorem~\ref{thm:naiveheightbound}, and let
$$
c_{\max} :=
\begin{cases}
 4(2g - 2 + |S| + \deg\Delta_f) + \frac{3h_B(f)}{d} & \text{if $\car(k)=0$} \\
 6\rho(2g - 2 + |S| + \deg\Delta_f) + \frac{3h_B(f)}{d} & \text{if $\car(k)>d$.}
\end{cases}
$$
Then we have
$$
|\mathcal{C}(R_S)| \leq
\begin{cases}
2^{(d+1)c_{\max}+h_B(f) + dg - g_f +1 + \rk_\Z \LN(J)} & \text{if $C_f$ is irreducible} \\
2^{(d+1)c_{\max}+h_B(f) + dg + \omega(f) + \rk_\Z \LN(J) + \dim_{\F_2} \LN(J)[2]} & \text{otherwise.}
\end{cases}
$$
\end{theorem}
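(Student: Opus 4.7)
The plan is to deduce Theorem~\ref{thm:intro2} from Theorem~\ref{thm:intro1}(1) by first producing an explicit upper bound $c_{\max}$ on the height $\deg x_0$ of any $S$-integral point. This reduction is natural because any $S$-integral point lies automatically in $\mathcal{C}(k(B))_{\leq c_{\max}}$, so one obtains a bound on $|\mathcal{C}(R_S)|$ by substituting $c = c_{\max}$ into the counting theorem already proved.

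First I would establish in \S{}\ref{sec:intpointsheightbound} (as Theorem~\ref{thm:naiveheightbound}) the height bound defining $c_{\max}$, following the strategy of Hindry-Silverman. The idea is to exploit the fact that for $(x_0,y_0)\in\mathcal{C}(R_S)$ the function $x_0-\alpha$ (where $\alpha$ runs over the roots of $f$ in a splitting field) has poles only in $S$ and zeros controlled by $y_0^2 = f(x_0)$; feeding this into a function-field $abc$-type inequality bounds $\deg x_0$ linearly in $g$, $|S|$, $\deg\Delta_f$, and $h_B(f)/d$. In characteristic zero this yields the factor $4$; in positive characteristic $>d$ the potential inseparability of the underlying maps forces me to introduce the inseparable degree $\rho$ and replace $4$ by $6\rho$. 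This is the step I expect to carry most of the technical weight.

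Second, with $c_{\max}$ in hand I would set $c=\lceil c_{\max}\rceil$ in Theorem~\ref{thm:intro1}(1) to obtain
\begin{equation*}
|\mathcal{C}(R_S)| \leq 2^{\Omega(c_{\max},f,g) + \max\{c_{\max},\tfrac{1}{2}(c_{\max}+g)\} + 1 + \rk_\Z \LN(J) + \dim_{\F_2}\LN(J)[2]}
\end{equation*}
and then simplify. From the explicit form of $c_{\max}$ one sees $c_{\max}\geq g$, so the outer maximum collapses to $c_{\max}$. In the irreducible case, $\omega(f)=1$ forces $J(k(B))[2]=0$ and hence $\dim_{\F_2}\LN(J)[2]=0$; moreover $c_{\max}$ is comfortably large enough (as can be checked from a Riemann-Hurwitz bound on $g_f$ via the cover $C_f\to B$ in terms of $\deg\Delta_f$) that $d(c_{\max}+g)+h_B(f)\geq 2g_f$, so $\Omega=d(c_{\max}+g)+h_B(f)-g_f$. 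Collecting terms recovers $(d+1)c_{\max}+h_B(f)+dg-g_f+1+\rk_\Z \LN(J)$, i.e., the first branch. In the reducible case, $\Omega=d(c_{\max}+g)+h_B(f)+\omega(f)-1$ by definition, and reading off the exponent gives the second branch directly.

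The main obstacle is clearly the first step: once Theorem~\ref{thm:naiveheightbound} is established, the deduction of Theorem~\ref{thm:intro2} is a mechanical substitution. Almost all the content therefore sits in the explicit Hindry-Silverman-type height bound, particularly in handling the wild behaviour in positive characteristic $>d$ and in extracting the clean constants $4$ and $6\rho$ appearing in the formula for $c_{\max}$.
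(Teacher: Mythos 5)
Your proposal is correct and follows essentially the same route as the paper: establish the height bound of Theorem~\ref{thm:naiveheightbound} (using $|S\cup\Sigma|\leq |S|+\deg\Delta_f$), substitute $c_{\max}$ into Theorem~\ref{thm:intro1}, and verify that both maxima are attained at their first entries, with the Riemann--Hurwitz comparison of $g_f$ to $d(2g-2+|T|)$ being exactly the paper's argument for $d(c_{\max}+g)+h_B(f)\geq 2g_f$. The only detail you elide is the verification of $c_{\max}\geq g$ in the edge case $g=0$, where the paper invokes the fact that a non-constant fibration of $\PP^1$ has at least two bad fibers to guarantee $|S\cup\Sigma|\geq 2$.
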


In order to bound the height of integral points, the key ingredient is the \emph{abc}-theorem over function fields, that we apply over a splitting field of $f$. The condition $\car(k)>d$ is used to ensure that this extension is tamely ramified over $k(B)$.

In the case when the base curve is $\mathbb{P}^1$, one has an improved bound as in Theorem~\ref{thm:intro1}~(2). One can also deduce an alternative bound by combining this with the inequality \eqref{eq:GOSpimp}.

When $k$ has characteristic $0$ and $d=3$ (\emph{i.e.} $\mathcal{C}$ is an elliptic curve), Hindry and Silverman \cite{HS} proved, under the assumption that the Weierstrass equation of $\mathcal{C}$ is minimal over the ring $R_S$, that
$$
|\mathcal{C}(R_S)| \leq
\left\{
\begin{array}{ll}
144\left(10^{7.1}\sqrt{|S|}\right)^r  & \text{if}\deg(\mathcal{D})\geq 24(g-1) \\
(8\pi^2(g-1))^{2/3}\left(10^{7+12g}\sqrt{|S|}\right)^r & \text{otherwise},
\end{array}\right.
$$
where $\mathcal{D}$ is the discriminant of (the Weierstrass equation of) $\mathcal{C}$, and $r=\rk_\Z \mathcal{C}(k(B))$ is the rank of $\mathcal{C}$. This result was extended to function fields of positive characteristic by Pacheco \cite{pacheco98}.  Our bound improves on Hindry and Silverman's one only in specific ranges (e.g. if $S$ is small enough), but also applies without a minimality hypothesis. Yet another bound on the number of integral points, valid in characteristic 0 and without a minimality hypothesis, was given by Chi, Lai, and Tan \cite{CLT04}; their bound may have advantages over the Hindry-Silverman bound in certain cases when $k$ is not algebraically closed.

\subsection*{Bounding the $3$-torsion of Jacobians of hyperelliptic curves}

When the base curve is $\mathbb{P}^1$, $S=\{\infty\}$, and $\mathcal{C}$ is an elliptic curve, we are able to improve the bound for the number of integral points in $\mathcal{C}(k[t])$ with small naive height by using refinements of Riemann-Roch specific to trigonal curves, originating in classical results of Maroni (see Theorem~\ref{generalMaronibound}).

By relating $3$-torsion points of Jacobians of hyperelliptic curves to integral points on certain elliptic curves, we deduce the following result.

\begin{theorem}
\label{thm:intro3}
Let $q=p^r$ for some prime $p\geq 5$. Let $X$ be a hyperelliptic curve of genus $g$ over $\mathbb{F}_q$, with a rational Weierstrass point, and let $\Jac(X)$ be the Jacobian of $X$. Then
\begin{align*}
|\Jac(X)(\mathbb{F}_q)[3]|\leq q^{\frac{g}{2}+\gamma\frac{g}{\log g}}
\end{align*}
for some explicit constant $\gamma$ depending only on $q$.
\end{theorem}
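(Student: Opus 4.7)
The plan is to parametrize nonzero $\mathbb{F}_q$-rational $3$-torsion classes of $\Jac(X)$ by polynomial triples $(A,B,G)$ satisfying an identity of the form $A^2-B^2F=cG^3$, then to reinterpret such triples---with $B$ fixed---as $\mathbb{F}_q[t]$-integral points on an associated elliptic curve $E_{B,c}$ over $\mathbb{F}_q(t)$, and finally to combine the trigonal/Maroni refinement of Theorem~\ref{thm:intro1}(2) from Section~\ref{section4} (Theorem~\ref{generalMaronibound}) with Brumer's upper bound on the rank of elliptic curves over $\mathbb{F}_q(t)$.

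I would begin by writing $X:y^2=F(t)$, with $F\in\mathbb{F}_q[t]$ monic separable of degree $2g+1$ so that infinity is the rational Weierstrass point. A nonzero class $T\in\Jac(X)(\mathbb{F}_q)[3]$ is represented by an effective divisor $D$ of degree $g$ with $3D\sim 3g\cdot\infty$, yielding a function $h_T=A(t)+B(t)y\in L(3g\cdot\infty)$ with divisor $3D-3g\cdot\infty$. The Riemann--Roch basis of $L(3g\cdot\infty)$ forces $\deg A\leq\lfloor 3g/2\rfloor$ and $\deg B\leq\lfloor(g-1)/2\rfloor$. Since $\operatorname{div}(h_T)$ is divisible by $3$, so is the zero divisor on $\mathbb{P}^1$ of the norm $A^2-B^2F$, and hence
\begin{equation*}
A^2-B^2F=c\,G^3
\end{equation*}
for some $c\in\mathbb{F}_q^\times$ (determined modulo cubes) and some $G\in\mathbb{F}_q[t]$ of degree $\leq g$. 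The hyperelliptic involution swaps $T\leftrightarrow -T$ and $B\leftrightarrow -B$, while $h_T$ is determined by $T$ up to $\mathbb{F}_q^\times$-scaling.

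Rearranged to $A^2=cG^3+B^2F$, this is a Weierstrass-type equation of an elliptic curve $E_{B,c}/\mathbb{F}_q(t)$ of $j$-invariant $0$, which is non-constant as soon as $B\neq 0$, and the pair $(G,A)$ is an $\mathbb{F}_q[t]$-integral point of $E_{B,c}$ whose first coordinate has $t$-degree $\leq g$. Assembling these data yields
\begin{equation*}
|\Jac(X)(\mathbb{F}_q)[3]|\ \leq\ 1+2\sum_{c}\sum_{B}\bigl|E_{B,c}(\overline{\mathbb{F}_q}(t))_{\leq g}\bigr|,
\end{equation*}
where $c$ runs over the at most $3$ cube-classes of $\mathbb{F}_q^\times$ and $B$ over polynomials of degree $\leq\lfloor(g-1)/2\rfloor$; the latter count is $O(q^{\lceil g/2\rceil})$ and supplies the leading $q^{g/2}$ of the final bound. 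For each fixed $(c,B)$, I would then apply Theorem~\ref{generalMaronibound} to $E_{B,c}$: its discriminant $-432c^2B^4F^2$ has $t$-degree $O(g)$, and Brumer's theorem on the rank of elliptic curves over $\mathbb{F}_q(t)$ yields $\rk_\Z E_{B,c}(\mathbb{F}_q(t))\leq \gamma'g/\log g$ for an explicit $\gamma'=\gamma'(q)$, so that the refined count becomes $q^{\gamma''g/\log g}$. Multiplying by the number of admissible $B$ gives the desired bound $q^{g/2+\gamma g/\log g}$.

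The hard part will be the careful quantitative combination of the Maroni-refined small-height count with Brumer's rank bound, ensuring that all contributions arising from the height, the conductor, and the $2$-torsion of $E_{B,c}$ are absorbed into the subleading term $\gamma g/\log g$ rather than contaminating the leading $g/2$. A secondary subtlety is the treatment of the cube-class of $c$ when $3\mid q-1$: one must either sum separately over the three cube-classes or pass to a cubic extension and descend, while keeping the degree bounds on $(A,B,G)$ sharp throughout.
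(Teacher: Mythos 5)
Your overall route is the same as the paper's: parametrize nonzero $3$-torsion classes by functions $A+By\in L(3g\cdot\infty)$, take norms to get $A^2-B^2F=cG^3$ with $\deg B\leq\frac{g-1}{2}$ and $\deg G\leq g$, view $(G,A)$ as a $k[t]$-integral point of bounded height on a $j=0$ elliptic curve indexed by $B$ (the paper's Lemma~\ref{lemma:maintrick}, which absorbs $c$ into the coefficient and so avoids your separate sum over cube classes), and then invoke Theorem~\ref{generalMaronibound}/Theorem~\ref{j0bound} together with Brumer's rank bound. Two small slips: the displayed sum should run over $E_{B,c}(\mathbb{F}_q[t])$, not $\overline{\mathbb{F}_q}(t)$-points (Brumer bounds the arithmetic rank, and it is $|E(\mathbb{F}_q(t))/2|\leq 2^{r}$ that controls the number of descent classes), and the map is already injective, so no factor of $2$ is needed.

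The genuine gap is in the quantitative assembly. Your claim that for each fixed $B$ the Maroni-refined count is $q^{\gamma''g/\log g}$ is false: for the curve $y^2=x^3+B^2F$ with $\deg B=b$, the trigonal curve has genus $\approx 2g+1+b$ and the exponent produced by Theorem~\ref{j0bound} at height $c\approx g$ is $\approx \frac{g}{6}-\frac{b}{3}+r$, which for small $b$ is a positive constant times $g$, not $O(g/\log g)$. Consequently ``(number of $B$)\,$\times$\,(uniform per-curve count)'' overshoots the exponent by $\Theta(g)$. The correct argument must sum over $B$ stratified by degree and by factorization type, exploiting the trade-off that the per-curve exponent \emph{decreases} by roughly $\frac{1}{3}\deg B$ while the number of admissible $B$ grows like $q^{\deg B}$; the two effects combine to give exactly $q^{d/4}\approx q^{g/2}$. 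This stratification must also track $\deg\gcd(B,F)$, which lowers the genus of the trigonal curve and can violate the hypothesis $c<\frac{2}{3}g_C$ of Theorem~\ref{j0bound}, forcing a separate case handled by Theorem~\ref{generalMaronibound}; counting the possible values of $\gcd(B,F)$ requires bounding the number of irreducible factors of $F$ (the paper's Lemma~\ref{irreduciblefactors}), and that count is one of the sources of the $\gamma\frac{g}{\log g}$ error term, alongside Brumer's bound. So the ingredients you list are the right ones, but the step you defer as ``the hard part'' is precisely where your stated bookkeeping would fail and where the actual proof lives.
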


When $q<81$ this asymptotically improves on the trivial bound $|\Jac(X)(\mathbb{F}_q)[3]|\leq 3^{2g}$. 
When $\mathbb{F}_q$ does not contain a primitive third root of unity (\emph{i.e.} when $q\not\equiv 1\pmod{3}$) then $|\Jac(X)(\mathbb{F}_q)[3]|\leq 3^{g}$ by Galois-invariance of the Weil pairing. We (asymptotically) improve on this bound when $q<9$.

Weil \cite{Weil} proved the inequalities $(\sqrt{q}-1)^{2g}\leq |\Jac(X)(\mathbb{F}_q)|\leq (\sqrt{q}+1)^{2g}$, and in particular $|\Jac(X)(\mathbb{F}_q)[3]|\leq (\sqrt{q}+1)^{2g}$. An argument of Soundararajan outlined in \cite[p.~19]{HV} (see also \cite{Yudovina}), when applied over function fields using the (known) generalized Riemann hypothesis in that setting, improves this to $|\Jac(X)(\mathbb{F}_q)[3]|\leq q^{\frac{2}{3}g+\epsilon}$ for any $\epsilon>0$. Our result (asymptotically) improves  both of these bounds.

To compare with analogous results over number fields, we note that curves $X$ of genus $g$ and gonality $n$ over $\mathbb{F}_q$ are analogous to number fields $k$ of degree $n$ over $\mathbb{Q}$, and the absolute discriminant $\Delta_k$ of $k$ is analogous to $q^{2g}$. If we write $\Delta_X=q^{2g}$, then our bound is of the form $\Delta_X^{\frac{1}{4}+\epsilon}$, and the hyperelliptic curves $X$ are analogous to quadratic fields over $\mathbb{Q}$. After work of Pierce \cite{Pierce05,Pierce06} and Helfgott and Venkatesh \cite{HV}, the best known upper bound for the size of the $3$-part of the ideal class group of a quadratic field $k$ over $\mathbb{Q}$ is $\Delta_k^{\frac{1}{3}+\epsilon}$, due to Ellenberg and Venkatesh \cite{EV}. Thus, we obtain an improvement over these results in the function field setting.

\begin{remark}
\label{remark:goodreductiontrick}
Let $X$ be a hyperelliptic curve of genus $g$ over $\Q$, with a rational Weierstrass point. If $X$ has good reduction at $5$, then
$$
|\Jac(X)(\Q)[3]|\leq 5^{\frac{g}{2}+\gamma\frac{g}{\log g}}
$$
for some absolute constant $\gamma$, which asymptotically improves on the trivial bound $3^g$. Indeed, the reduction-mod-$5$ map is injective on $3$-torsion hence the result follows from Theorem~\ref{thm:intro3}. Under the weaker assumption that $\Jac(X)$ has good reduction at $5$, one has a slightly modified variant, because in this case the reduction of $\Jac(X)$ is a product of Jacobians of hyperelliptic curves, whose sum of genera is equal to $g$ (see Remark~\ref{remark:Xbadred}).

A similar statement holds when $X$ has good reduction at $7$. If $X$ has bad reduction, one can still give an upper bound depending on the reduction type of $X$ (see Remark~\ref{remark:Xbadred}).
\end{remark}

\begin{remark}
Let $T\to \PP^1$ be a trigonal curve, whose Galois closure $\tilde{T}\to \PP^1$ has group $S_3$, and let $X\to \PP^1$ be the unique hyperelliptic subcover of $\tilde{T}$. Spencer \cite{spencer2025} constructs a Galois-equivariant map $\Jac(T)[3] \to \Jac(X)[3]$ which is injective when $g(X) = g(T)$ or $g(X) = g(T) + 1$. In these cases, one can derive from Theorem~\ref{thm:intro3} an upper bound on $\Jac(T)(\mathbb{F}_q)[3]$.
\end{remark}

Finally let us make a small comment on the case when $\car(k)=2$. In order to extend our results to this case, one should replace {\'e}tale cohomology by flat cohomology. The $2$-descent map can be still described explicitly, but the formulas are more involved. Let us cite the results of Kramer \cite{kramer77} who worked out the case of an ordinary elliptic curve over a field of characteristic $2$. In this case multiplication-by-$2$ can be decomposed into Frobenius and Verschiebung isogenies, and the $2$-descent mixes Kummer theory and Artin-Schreier theory. This is beyond the scope of the current paper.

\subsection*{Bounding the $2$-torsion of Jacobians of trigonal curves}

Using techniques similar to those described in the previous section, by relating $2$-torsion points of Jacobians of trigonal curves to integral points on certain elliptic curves, we deduce the following result.

\begin{theorem}
\label{thm:intro4}
Let $q=p^r$ for some prime $p\geq 5$. Let $\pi:X\to\mathbb{P}^1$ be a trigonal curve of genus $g$ over $\mathbb{F}_q$, with a rational totally ramified point, and let $\Jac(X)$ be the Jacobian of $X$. Then
\begin{align*}
|\Jac(X)(\mathbb{F}_q)[2]|\leq (2q)^{\frac{g}{3}+\gamma\frac{g}{\log g}},
\end{align*}
for some explicit constant $\gamma$ depending only on $q$.
\end{theorem}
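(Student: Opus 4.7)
The plan is to mirror, in the trigonal setting, the strategy used to establish Theorem~\ref{thm:intro3}: embed $\Jac(X)(\mathbb{F}_q)[2]$ (up to a kernel of bounded size) into the set of $\mathbb{F}_q[t]$-integral points on an auxiliary elliptic curve over $\mathbb{F}_q(t)$, and then invoke Theorem~\ref{thm:intro2} together with Brumer's bound on the rank of elliptic curves over $\mathbb{F}_q(t)$.

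First, since $p\ge 5$ (in particular $\car\mathbb{F}_q\neq 3$) and $X$ has a rational totally ramified point for the trigonal map $\pi\colon X\to\mathbb{P}^1$, we normalise the defining equation of $X$ by placing that point at infinity and performing a Tschirnhaus substitution, bringing it to the form
$$y^3 + a(x)\,y + b(x) = 0,$$
with $a,b\in\mathbb{F}_q[x]$. Riemann--Hurwitz applied to $\pi$ controls $\deg a$ and $\deg b$ linearly in $g$; in particular the discriminant $\Delta = -4a^3-27b^2$ of the cubic in $y$ has degree $O(g)$, and the coefficients of any Weierstrass equation built from $(a,b)$ will inherit $O(g)$-sized heights.

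Next, to each class $\tau\in\Jac(X)(\mathbb{F}_q)[2]$ we attach an $\mathbb{F}_q[x]$-integral point on an explicit elliptic curve $\E=\E_X$ over $\mathbb{F}_q(t)$. Writing $\tau=[D]$ with $2D$ principal, Riemann--Roch furnishes $\phi\in\mathbb{F}_q(X)^{\times}$ with $\divisor(\phi)=2D$; expanding $\phi = u(x)+v(x)\,y+w(x)\,y^2$ in the basis $\{1,y,y^2\}$ of $\mathbb{F}_q(X)$ over $\mathbb{F}_q(x)$ and imposing that $\phi$ be a square modulo the defining equation yields polynomial identities among $u,v,w$ and $(a,b)$ that, after a standard change of variables, cut out a Weierstrass model $\E_X$ over $\mathbb{F}_q[x]$ whose discriminant divides a power of $\Delta$. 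The normalisation above guarantees that the coordinates of the resulting point $P_\tau\in\E_X$ lie in $\mathbb{F}_q[x]$, and one checks that the assignment $\tau\mapsto P_\tau$ has finite kernel of size bounded by a constant depending only on $q$.

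Finally, we apply Theorem~\ref{thm:intro2} to $\E_X$ with $k=\mathbb{F}_q$, $B=\mathbb{P}^1$, $S=\{\infty\}$ and $d=3$. The height $h_B(f)$, the quantity $\deg\Delta_f$, and the remaining parameters entering the bound are each $O(g)$, while $\rk_{\Z}\LN(\E_X)$ is controlled via Brumer's rank estimate by the conductor of $\E_X$, contributing an $O\!\left(g/\log g\right)$ term with an implicit constant depending only on $q$. Bookkeeping the exponent exactly as in the proof of Theorem~\ref{thm:intro3}, and exploiting the trigonal refinements of Riemann--Roch (the Maroni-type bound stated as Theorem~\ref{generalMaronibound}) to secure the correct leading coefficient, one obtains a main term of $(g/3)\log_2(2q)$ plus an error of $\gamma g/\log g$, yielding $(2q)^{g/3+\gamma g/\log g}$ after exponentiation. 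The main obstacle is the first step: constructing $\E_X$ explicitly and proving that $\tau\mapsto P_\tau$ is injective up to a uniformly bounded kernel; once this is in place, the integral-point bound of Theorem~\ref{thm:intro2} combined with Brumer's rank bound delivers the theorem.
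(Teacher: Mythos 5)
There is a genuine gap, and it sits exactly where you flag "the main obstacle": the construction of the auxiliary elliptic curve. You propose a \emph{single} curve $\E_X$ and hope that every $2$-torsion class yields an integral point on it, by expanding $\phi=u(x)+v(x)y+w(x)y^2$ and "imposing that $\phi$ be a square." This cannot work as stated. First, $\divisor(\phi)=2D-2gP_\infty$ only forces the divisor of $\phi$ to be even, not $\phi$ itself to be a square, and when $w\neq 0$ the norm $\Norm_{X/\PP^1}(\phi)$ is a resultant in $u,v,w$ that is not of the form $G(x_0)$ for any fixed cubic $G$; so there is no single Weierstrass model receiving all classes. Second, a single-curve construction would be \emph{too strong}: by Theorem~\ref{generalMaronibound} with $c\approx 2g/3$ and genus $g$, one curve carries at most $2^{g/3+o(g)}$ integral points of the relevant height, so an injection (up to bounded kernel) into one curve would prove $|\Jac(X)(\F_q)[2]|\leq 2^{g/3+o(g)}$ --- the factor $q^{g/3}$ in the stated bound is there precisely because the paper needs a \emph{family} of $q^{\lceil g/3\rceil}-1$ elliptic curves. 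The actual construction (Lemma~\ref{lemma:maintrick2}) takes $L(2gP_\infty)$, which contains $1,t,\dots,t^{\lfloor 2g/3\rfloor}$, completes these to a basis by functions spanning a complement $V$ of dimension $\lceil g/3\rceil$, and observes that any $\phi$ with $\divisor(\phi)=2D-2gP_\infty$ decomposes as $x_0(t)-\psi$ with $x_0\in\F_q[t]_{\leq 2g/3}$ and $\psi\in V\setminus\{0\}$ generating the cubic extension; taking norms gives $F_\psi(x_0)=cy_0^2$, i.e.\ an integral point on $y^2=F_\psi(x)$, whose associated curve $F_\psi=0$ is isomorphic to $X$ itself (hence $g_C=g$). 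The map is at most $3$-to-$1$, and the sum over the $q^{\lceil g/3\rceil}-1$ curves produces the $q^{g/3}$ factor.

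A secondary problem is your reliance on Theorem~\ref{thm:intro2}. That theorem bounds integral points via the \emph{abc}-theorem height bound $c_{\max}$, which is of size roughly $6\rho(2g-2+|S|+\deg\Delta_f)+3h_B(f)/d$; feeding this into the exponent $(d+1)c_{\max}+\cdots$ gives a bound far weaker than $(2q)^{g/3}$. In the correct argument no height bound needs to be proved at all: the points produced by the norm construction automatically satisfy $\deg x_0\leq 2g/3$ because $\phi\in L(2gP_\infty)$, and one then applies the Maroni refinement (Theorem~\ref{generalMaronibound}) directly with $c=\lfloor 2g/3\rfloor$ and $g_C=g$ to get $2^{(g+4)/3+r_\E}$ points per curve, with $r_\E$ controlled by Brumer. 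Your final bookkeeping paragraph is the right shape, but it presupposes the two ingredients above that the proposal does not supply.
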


Bhargava \emph{et al.} \cite[Theorem~7.1]{bhargava17} proved the general upper bound (without a trigonal hypothesis)
\begin{align*}
|\Jac(X)(\mathbb{F}_q)[2]|\leq \frac{q^{g+1}-1}{q-1},
\end{align*}
and in the case of $n$-gonal curves the bound
\begin{align*}
|\Jac(X)(\mathbb{F}_q)[2]|\ll_n q^{(1-\frac{1}{n})g}.
\end{align*}

It is worth noting that their proof relies on the Riemann-Roch Theorem, like ours.

In the case of trigonal curves ($n=3$) with a rational totally ramified point and $p\geq 5$, our Theorem~\ref{thm:intro4} asymptotically improves on these bounds for all valid values of $q$. When $q<32$, 
we asymptotically improve on the trivial bound $|\Jac(X)(\mathbb{F}_q)[2]|\leq 2^{2g}$.

\begin{remark}
Let $E$ be a non-constant elliptic curve over $\F_q(t)$. Assume that $E$ has at least one rational place of additive reduction of type $\mathrm{II}$, $\mathrm{IV}$, $\mathrm{II}^*$ or $\mathrm{IV}^*$ (hence the trigonal curve over $\F_q$ defined by the vanishing of the $y$-coordinate on $E$ has a rational totally ramified point).
When the conductor of $E$ has large degree but only a few irreducible components, one obtains by combining \cite[Theorem~1.1]{gl18a} and Theorem~\ref{thm:intro4}  an upper bound on the rank of $E$ over $\F_q(t)$ which asymptotically improves on the geometric rank bound (see the introduction of \cite{gl18a} for relevant terminology). Under suitable assumptions, a reduction trick as in Remark~\ref{remark:goodreductiontrick} allows to replace $\F_q$ by a number field.
\end{remark}

\subsection*{Structure of the paper}

In Section~\ref{section2} we recall properties of heights and the explicit formula for the $2$-descent map, and then we prove the main result, Theorem~\ref{thm:nbratpoint}, which gives an upper bound on the number of rational points of bounded height mapping to a given class under the $2$-descent map; we then derive Theorem~\ref{thm:intro1}. In Section~\ref{section3} we give an upper bound on the height of $S$-integral points on $\mathcal{C}$, then we prove Theorem~\ref{thm:intro2}. Then we focus on elliptic curves over $\PP^1$. The refinements of Riemann-Roch for the trigonal curve $C_f$ lead to improvements on the counting of points of small height. At the core of our ingredients is the notion of Maroni invariant of a trigonal curve. In Section~\ref{section4}, we take advantage of these refinements to prove Theorem~\ref{thm:intro3} and Theorem~\ref{thm:intro4}.


\section{Counting rational points}
\label{section2}

As in the introduction, $k$ is an algebraically closed field of characteristic not $2$, and $B$ is a smooth projective geometrically connected $k$-curve of genus $g$. We consider a hyperelliptic curve $\mathcal{C}$ over $k(B)$ defined by a Weierstrass equation of the form \eqref{eq:C}.
We let $C_f$ be the smooth projective $k$-curve defined by the equation $f(x)=0$, and we denote by $\pi:C_f\to B$ the natural degree $d$ map. We let $\omega(f)$ be the number of irreducible factors of $f$ over $k(B)$, which is equal to the number of irreducible components of $C_f$. We denote by $k(C_f) = k(B)[X]/f(X)$ the ring of rational functions on $C_f$, which is a $k(B)$-algebra of degree $d$. If $C_1,\dots,C_{\omega(f)}$ are the irreducible components of $C_f$, then we have a splitting $k(C_f) = k(C_1)\times\dots \times k(C_{\omega(f)})$ where the $k(C_i)$ are fields.


\subsection{Heights}
\label{sec:heights}

The degree of a non-constant rational function on $B$ is by definition the degree of the divisor of its poles, equivalently the degree of the corresponding map $B\to \PP^1$. By convention, the degree of a constant map is zero.

Recall that we have the properties
$$
\deg(u^r)=r\deg(u); \qquad \deg(uv) \leq \deg(u) + \deg(v); \qquad \deg(u+v) \leq \deg(u) + \deg(v).
$$

If $P=(x_0,y_0)$ is a $k(B)$-rational point on $\mathcal{C}$, we consider $\deg(x_0)$ as being its naive height (as does Silverman in \cite[Chap.~III, \S{}4]{silvermanII}). In order to keep the notation as simple as possible, we shall refer to the degree in all statements, avoiding the language of heights.

Let us point out that this naive height depends on the choice of an equation for $\mathcal{C}$. Once an equation is fixed, the naive height is closely related to the N{\'e}ron-Tate height (on the Jacobian of $\mathcal{C}$); more precisely, the difference between $\frac{1}{2}\deg(x_0)$ and the N{\'e}ron-Tate height of the divisor class of $(x_0,y_0)-\infty$ is bounded by an absolute constant depending only on the Weierstrass equation of $\mathcal{C}$. In the case of elliptic curves, this is proved in \cite[Chap.~III, \S{}4]{silvermanII}. The hyperelliptic case is similar.
We give an explicit inequality in the simplest case of an elliptic curve over $k(t)$; the result is implicit in the literature, but lacking a direct reference we provide a proof.

\begin{theorem}
\label{comparison}
Let $E:y^2=x^3+Ax+B$ be a nonconstant elliptic curve over $k(t)$ with $A,B\in k[t]$. Let $\chi=\max\{\lceil\frac{1}{4}\deg A\rceil, \lceil\frac{1}{6}\deg B\rceil\}$ and let $j$ be the $j$-invariant of $E$. For $P=(x_0,y_0)\in E(k(t))$, we have
\begin{align*}
-2\chi\leq \deg(x_0)-2\hat{h}(P)&\leq \frac{1}{12}\deg(j)+2\chi.
\end{align*}
\end{theorem}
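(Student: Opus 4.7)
The plan is to follow the classical Tate--Silverman strategy: bound $h([2]P) - 4h(P)$ in one step with explicit constants, then telescope. Extend $h$ by setting $h(O)=0$. Since $2\hat h(P)=\lim_{n\to\infty} h([2^n]P)/4^n$, the telescoping identity
\[
h(P) - 2\hat h(P) = \sum_{n=0}^{\infty} \frac{4\,h([2^n]P)-h([2^{n+1}]P)}{4^{n+1}},
\]
together with $\sum_{n\ge 0}4^{-n-1}=\tfrac{1}{3}$, reduces the theorem to establishing the single-step bound
\[
-6\chi \;\le\; 4h(Q)-h([2]Q) \;\le\; \tfrac{1}{4}\deg(j)+6\chi
\]
uniformly in $Q\in E(k(t))$. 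The degenerate case $[2]Q=O$ is handled separately: then $Q$ is $2$-torsion, so $\hat h(Q)=0$ and $x(Q)$ is a root of $x^3+Ax+B$, whose degree is bounded directly in terms of $\deg A,\deg B$.

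Writing $x(Q)=a/b$ with $a,b\in k[t]$ coprime, the duplication formula yields $x([2]Q)=N/D$, where
\[
N=a^4-2Aa^2b^2-8Bab^3+A^2b^4, \qquad D=4b(a^3+Aab^2+Bb^3).
\]
The upper bound on $h([2]Q)$ comes from a term-by-term degree count. Both $\phi(x)=x^4-2Ax^2-8Bx+A^2$ and $\psi_2(x)^2=4(x^3+Ax+B)$ are weighted-homogeneous in $(x,A,B)$ with respective weights $8$ and $6$ (for weights $2,4,6$ on $x,A,B$), so $\deg A\le 4\chi$ and $\deg B\le 6\chi$ force each monomial in $N$ and $D$ to have degree at most $4h(Q)+O(\chi)$. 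The opposite inequality is obtained from $h([2]Q)=\max(\deg N,\deg D)-\deg\gcd(N,D)$ and a bound on the common factor: any divisor of both $N$ and $D$ in $k[t]$ must divide the resultant of $\phi$ and $\psi_2^2$ viewed as polynomials in $x$ over $k[t]$, which, up to a nonzero constant, is a power of the discriminant $\Delta=-16(4A^3+27B^2)$. The relation $j=-1728(4A)^3/\Delta$ then converts the $\deg\Delta$-estimate into one involving $\deg(j)$, which is crucial since $\deg j$ can be much smaller than $\deg\Delta$.

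The main obstacle will be tracking the constants precisely. In particular, the naive bound $\max(\deg N,\deg D)\le 4h(Q)+8\chi$ forced by the $A^2b^4$ term must be sharpened to $4h(Q)+6\chi$ by exploiting that this term only dominates when $\deg a\approx\deg b$, together with a matching cancellation in the reduction of $N/D$ to lowest terms; and the resultant/\,gcd analysis must be arranged to produce $\deg(j)$ rather than the larger $\deg\Delta$. Once both one-step inequalities hold with the stated constants, dividing by $3$ via the telescoping identity yields exactly the claimed $-2\chi \le \deg(x_0)-2\hat h(P) \le \tfrac{1}{12}\deg(j)+2\chi$.
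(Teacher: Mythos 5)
Your telescoping framework and constant bookkeeping are fine (the factor $\sum_{n\ge0}4^{-n-1}=\frac13$ does convert one-step constants $6\chi$ and $\frac14\deg(j)+6\chi$ into the claimed $2\chi$ and $\frac1{12}\deg(j)+2\chi$), but the two steps you defer as ``the main obstacle'' are exactly where all the content lies, and the first of them is not merely unproved but false. Take $B=0$ and $\deg A=4\chi$, so $N=(a^2-Ab^2)^2$ and $D=4ab(a^2+Ab^2)$. For a rational point with $\operatorname{ord}_\infty(x_0)>0$ (i.e.\ $\deg b>\deg a$) and $\gcd(a,A)=1$, the leading terms of $a^2$ and $Ab^2$ cannot cancel, so $\deg N=2\deg A+4\deg b=4h(Q)+8\chi$, while the coprimality forces $\gcd(N,D)=1$; hence $h([2]Q)=4h(Q)+8\chi$. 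No ``matching cancellation in the reduction to lowest terms'' is available, and such points exist on curves of positive rank (this is consistent with the theorem only because these particular $Q$ have $h(Q)-2\hat h(Q)$ near its minimum $-2\chi$). With the true uniform one-step constant $8\chi$, telescoping yields only $-\frac{8}{3}\chi\le \deg(x_0)-2\hat h(P)$, strictly weaker than the statement. The second deferred step is equally serious: the resultant of $\phi$ and $\psi_2^2$ is a power of $\Delta$, so a global gcd bound naturally produces $\deg\Delta$, and there is no indicated mechanism for replacing it by $\deg(j)$, which can be far smaller (indeed $\deg(j)=0$ for isotrivial nonconstant curves, where your resultant bound would give nothing of the required strength).

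For contrast, the paper avoids duplication entirely: it decomposes $\hat h=\sum_v\lambda_v$ into Tate's local heights, invokes the local estimate $-\frac16 v(\Delta)\le\max\{0,-v(x(P))\}-2\lambda_v(P)\le\frac1{12}\max\{0,-v(j)\}$ at every finite place (where $A,B$ are $v$-integral), handles $v=\infty$ by the explicit rescaling $(x,y)\mapsto(x/t^{2\chi},y/t^{3\chi})$ at a cost of $2\chi$ on each side, and sums over places using $\sum_v v(\Delta)=0$. The appearance of $\max\{0,-v(j)\}$ is precisely the local, reduction-type-sensitive information that a global duplication argument cannot see. If you want to salvage your route, you would essentially have to redo the one-step estimate place by place with local heights, at which point you have reproduced the paper's proof without the telescoping.
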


\begin{proof}
We may identify the set of places of $k(t)$ with the set of maximal ideals of $k[t]$ along with a unique place $\infty$, where $v(f/g)=\deg g-\deg f$ when $v=\infty$ and $f,g\in k[t]\setminus\{0\}$ (identifying a place with its associated discrete valuation).  For every place $v\neq\infty$, since $A$ and $B$ are polynomials (and so $v$-integral) it follows from a result of Tate (see \cite[Theorem~4.5]{Lang78} and \cite[Theorem~4.1]{Sil90}) that
\begin{align}
\label{Tate}
-\frac{1}{6}v(\Delta)\leq \max\{0,-v(x(P))\}-2\lambda_v(P)\leq \frac{1}{12}\max\{0, -v(j)\},
\end{align}
where $\hat{h}(P)=\sum_v \lambda_v(P)$ and $\lambda_v(P)$ does not depend on the choice of Weierstrass equation \cite[p.~64]{Lang78}. Now consider $v=\infty$. We change coordinates so that the coefficients in the Weierstrass equation are $v$-integral and work with the point $P'=(x',y')=(x(P)/t^{2\chi},y/t^{3\chi})$ on the curve $E':y'^2=x'^3+A/t^{4\chi}x'+B/t^{6\chi}$, with discriminant $\Delta'$ and $j$-invariant $j'$. Then applying \eqref{Tate} to $P'$ and $E'$, and using $\lambda_v(P')=\lambda_v(P)$, $j'=j$, $v(x'(P'))=v(x(P))+2\chi$, and $v(\Delta')=v(\Delta)+12\chi$, we obtain
\begin{align*}
-\frac{1}{6}v(\Delta)-2\chi\leq \max\{0,-v(x(P))-2\chi\}-2\lambda_v(P)\leq \frac{1}{12}\max\{0, -v(j)\},
\end{align*}
which implies
\begin{align*}
-\frac{1}{6}v(\Delta)-2\chi\leq \max\{0,-v(x(P))\}-2\lambda_v(P)\leq \frac{1}{12}\max\{0, -v(j)\}+2\chi.
\end{align*}
Now summing over all places yields the inequality. 
\end{proof}

Under our running assumptions, the naive height satisfies the Northcott property, \emph{i.e.} there are only finitely many rational points of bounded height on $\mathcal{C}$.

\begin{proposition}[Northcott]
Assume (A1), (A2) or (A3) holds. Then for any $c>0$ the set
$$
\mathcal{C}(k(B))_{\leq c} := \{(x_0,y_0)\in \mathcal{C}(k(B)) \mid \deg(x_0)\leq c\}
$$
is finite.
\end{proposition}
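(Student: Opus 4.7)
The plan is to split the argument according to the genus $(d-1)/2$ of $\mathcal{C}$.

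First suppose $d \geq 5$, so that $\mathcal{C}$ has genus at least $2$. Then assumption (A1) is excluded and we are necessarily under (A2) or (A3). As the introduction already recalls, in both of these situations the function field version of Mordell's conjecture guarantees that the full set $\mathcal{C}(k(B))$ is already finite, so $\mathcal{C}(k(B))_{\leq c}$ is trivially finite and there is nothing more to do.

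Now suppose $d = 3$, so that $\mathcal{C}$ is a non-constant elliptic curve under any of (A1), (A2), (A3). I would first observe that the $k(B)/k$-trace $\Tr_{k(B)/k}(\mathcal{C})$ is trivial: since $\mathcal{C}$ is one-dimensional, any nonzero abelian subvariety of $\mathcal{C}$ equals $\mathcal{C}$ itself, which would force $\mathcal{C}$ to be a base change from $k$ and contradict non-constancy. The Lang-N\'eron theorem then tells us that $\mathcal{C}(k(B))$ is a finitely generated abelian group. On such a group the N\'eron-Tate height $\hat h$ is a positive definite quadratic form modulo torsion, so the subset of rational points with bounded canonical height is finite. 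The final step is to transport this finiteness to the naive height via the comparison $|\deg x_0 - 2\hat h(P)| = O(1)$, where the implicit constant depends only on the Weierstrass equation of $\mathcal{C}$: Theorem~\ref{comparison} makes this explicit for $B = \PP^1$, and the same place-by-place argument using Tate's local height estimates gives the analogue over an arbitrary base curve $B$.

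None of the steps represents a genuine obstacle: all the ingredients (Lang-N\'eron, positive definiteness of $\hat h$, and the naive/canonical height comparison) are classical. The most delicate point is arguably the triviality of the trace, which crucially uses that $\mathcal{C}$ is one-dimensional, together with the extension of the explicit comparison in Theorem~\ref{comparison} from $\PP^1$ to a general base $B$; both are routine but need to be stated.
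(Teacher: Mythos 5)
Your proposal is correct and follows essentially the same route as the paper: both split according to the genus of $\mathcal{C}$, invoking the function field Mordell conjecture (Manin, Grauert, Samuel) for genus at least $2$ under (A2)/(A3), and the Lang--N\'eron theory for the elliptic case, to which all three assumptions reduce when $d=3$. The only cosmetic difference is that the paper cites the Northcott property for the naive height as being established directly inside the proof of the Lang--N\'eron theorem (Conrad, Section~7), whereas you re-derive it from the finitely generated conclusion together with positive definiteness of $\hat h$ modulo torsion (which requires your observation that the trace vanishes, an assertion the paper also makes) and the naive/canonical height comparison.
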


\begin{proof}
Under assumption (A1), $\mathcal{C}$ is a non-constant elliptic curve hence, according to the Lang-N\'eron Theorem, the group $\mathcal{C}(k(B))$ is finitely generated. Actually, it is part of the proof of the Lang-N\'eron Theorem that the naive height satisfies the Northcott property. For a modern exposition, we refer the reader to Conrad \cite[Section~7]{conrad06}.

When $\mathcal{C}$ has genus at least $2$ then the whole set of rational points of $\mathcal{C}$ is known to be finite under (A2) or (A3), by results of Manin \cite{manin63} and Grauert \cite{grauert65} in the case when $\car(k)=0$, completed by Samuel \cite{samuel66} when $\car(k)>0$. 
\end{proof}

We shall also use the notion of height of a polynomial with coefficients in $k(B)$. More precisely, if
$$
f = X^d + a_{d-1}X^{d-1} + \dots + a_1X + a_0,
$$
then the height of the polynomial $f$ (with respect to $B$) is defined as usual by
$$
h_B(f) := -\sum_{v\in B} \min\{0,v(a_0),\dots,v(a_{d-1})\}.
$$
We refer to \cite[Chap.~I, \S{}2]{mason84} for basic properties of this height. In particular:
\begin{enumerate}
  \item $h_B(X+a_0)=\deg(a_0)$
  \item If $f$ and $g$ are monic polynomials, then $h_B(fg)=h_B(f)+h_B(g)$
  \item When changing the base curve, the height is multiplied by the degree of the corresponding function field extension.
\end{enumerate}

It follows by combining the three previous properties that, if $B'/B$ is a cover of curves over which the monic polynomial $f$ has all its roots $e_1,\dots, e_d$, then we have
\begin{equation}
\label{eq:deg-height1}
\sum_{i=1}^d \deg_{B'}(e_i) = h_{B'}(f) = [B':B]h_B(f).
\end{equation}

Going back to our construction of the curve $C_f$, one can deduce that
\begin{equation}
\label{eq:deg-height2}
\deg_{C_f}(x) = h_B(f),
\end{equation}
where the degree of $x$ is computed over the curve $C_f$. Let us sketch the proof: since the degree of a function is additive over irreducible components of $C_f$, we may assume that $f$ is irreducible, in which case its roots are all conjugates (in the splitting field $k(B')$) hence all have the same degree. Therefore, \eqref{eq:deg-height1} yields
$$
d\deg_{B'}(x) = h_{B'}(f) = [B':B]h_B(f),
$$
which can be rewritten as
$$
d[B':C_f]\deg_{C_f}(x) = d[B':C_f]h_B(f),
$$
hence the result.


\subsection{The $2$-descent map}

Before we define this map, let us introduce some notation. Given a $k$-curve $C$ and a divisor $D$ on $C$, we identify the {\'e}tale cohomology group $H^1(C\setminus D,\mu_2)$ as a subgroup of $k(C)^\times/(k(C)^\times)^2$ as follows:
$$
H^1(C\setminus D,\mu_2)=\{h\in k(C)^\times/(k(C)^\times)^2 \mid \forall v\in C\setminus D,\quad v(h)\equiv 0\pmod{2}\}.
$$
Since $k$ is algebraically closed, this group is finite; if $C$ is irreducible then
$$
\dim_{\F_2} H^1(C\setminus D,\mu_2) =
\begin{cases}
  2g(C) &\text{if~$D=\emptyset$}\\
  2g(C)+\#D-1 &\text{if~$D\neq \emptyset$}.
\end{cases}
$$

\begin{lemma}
\label{lem:descentmap}
Let $\Sigma_2\subset B$ be the set of places above which the order of the group of connected components of the N\'eron model of the Jacobian of $\mathcal{C}$ is even, and let $x$ be the function on $C_f$ defined by $x:=X\mod{f(X)}$. Then the map
\begin{align*}
\delta: \mathcal{C}(k(B)) &\longrightarrow H^1(C_f\setminus \pi^{-1}(\Sigma_2),\mu_2) \\
(x_0,y_0) &\longmapsto x_0-x & \text{when $f(x_0)\neq 0$}\\
(x_0,0) &\longmapsto (x_0-X) + \frac{f(X)}{(X-x_0)} \mod{f(X)} & \text{when $f(x_0)=0$}
\end{align*}
is well-defined. If $\mathcal{C}$ is an elliptic curve, this map is a group homomorphism.
\end{lemma}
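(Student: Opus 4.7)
The plan is to verify three things in sequence: that the target element lies in $k(C_f)^\times$, that its image in $k(C_f)^\times/(k(C_f)^\times)^2$ has even valuation away from $\pi^{-1}(\Sigma_2)$, and finally (when $d=3$) that the map respects the group law. I shall work with the identification $k(C_f)=k(B)[X]/f(X)$ and exploit that $f$ is separable.

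First, the well-definedness. When $f(x_0)\neq 0$, the element $x_0-x$ is nonzero in each factor $k(C_i)$ because $x$ restricts to a root $\alpha_i$ of $f$ and $x_0-\alpha_i\neq 0$ by hypothesis. When $f(x_0)=0$, separability gives a factorization $f(X)=(X-x_0)g(X)$ with $g(X)\in k(B)[X]$ of degree $d-1$ and $g(x_0)=f'(x_0)\neq 0$. The proposed element $(x_0-X)+g(X)\bmod f(X)$ specializes to $f'(x_0)\neq 0$ on the component where $x=x_0$ (a copy of $k(B)$), and to $x_0-\alpha_i\neq 0$ on each other component, since $g(\alpha_i)=f(\alpha_i)/(\alpha_i-x_0)=0$ there. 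Hence $\delta(P)\in k(C_f)^\times$ in both cases.

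Second, the parity of the valuations. The key identity is the norm computation
\[
\Norm_{k(C_f)/k(B)}(x_0-x)=\prod_{i=1}^d(x_0-\alpha_i)=f(x_0)=y_0^2,
\]
with the analogous formula $\Norm(\delta(P))=f'(x_0)^2$ in the 2-torsion case. This shows that the sum of $v$-adic valuations over all places $v$ of $C_f$ lying above any fixed place $w$ of $B$ is even. To upgrade this joint parity into parity at each individual $v$, one interprets $\delta$ via the Kummer sequence attached to the étale cover of $\mathcal{C}\setminus f^{-1}(0)$ by the affine variety $\{y^2=x_0-x\}$; locally at $w$, the obstruction to parity lives in the 2-primary part of the component group $\Phi_w$ of the Néron model of $J$, and vanishes wherever $|\Phi_w|$ is odd. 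Since $\Sigma_2$ is defined to be the set of places where $|\Phi_w|$ is even, the image of $\delta$ is unramified on $C_f\setminus\pi^{-1}(\Sigma_2)$, as required.

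Third, the homomorphism property when $d=3$. Take $P_i=(x_i,y_i)\in\mathcal{C}(k(B))$ with $P_1+P_2+P_3=O$ and none of the $P_i$ equal to $O$ or $2$-torsion. They lie on a line $y=\ell(X)=aX+b$ with $a,b\in k(B)$, and both sides of
\[
f(X)-\ell(X)^2=(X-x_1)(X-x_2)(X-x_3)
\]
are monic cubics with the same roots. Reducing modulo $f(X)$ yields $-\ell(x)^2\equiv\prod_i(x-x_i)$ in $k(C_f)$, whence $\prod_i(x_i-x)=\ell(x)^2\in(k(C_f)^\times)^2$, so $\sum_i\delta(P_i)=0$. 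The degenerate cases in which some $P_i$ is $O$ or $2$-torsion are handled by direct substitution, using the modified formula and the factorization $f(X)=(X-x_0)g(X)$ to replace a vanishing factor by its derivative; in each case one recovers a square on the nose.

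The main obstacle is the second step: the norm identity gives only a constraint on the sum of valuations above a given place of $B$, and isolating the precise set $\Sigma_2$ on which individual parity can fail requires a local analysis of the Kummer sequence against the component group of $J$. The first and third steps are essentially formal once the correct formulas are in hand; the bulk of the descent-theoretic content is concentrated in pinning down $\Sigma_2$.
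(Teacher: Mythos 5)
Your proposal is correct in outline but takes a genuinely different route from the paper, which proves almost nothing directly: it identifies $\delta$ with the composition of the cohomological $2$-descent map on the Jacobian (citing Schaefer for the explicit formulas, including the $f(x_0)=0$ case) with the Abel--Jacobi embedding, and it outsources the containment of the image in $H^1(C_f\setminus\pi^{-1}(\Sigma_2),\mu_2)$ to Proposition~4.1 of the authors' earlier paper \cite{GHL23}. You instead verify everything by hand: the component-by-component nonvanishing check, the norm identity $\Norm(x_0-x)=f(x_0)=y_0^2$, and the classical collinearity argument $f(X)-\ell(X)^2=\prod_i(X-x_i)$ for the homomorphism property (your sign bookkeeping using that $d=3$ is odd is right). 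What your approach buys is self-containedness and transparency about where the actual content sits; what it costs is that your second step --- upgrading the fiberwise parity from the norm identity to parity at each individual place of $C_f$ away from $\pi^{-1}(\Sigma_2)$ --- is only a sketch. You correctly identify the mechanism (over $k$ algebraically closed with $\car(k)\neq 2$, the identity component $J_0(K_v)$ is $2$-divisible, so $J(K_v)/2J(K_v)$ is a quotient of $\Phi_v/2\Phi_v$ and vanishes when $|\Phi_v|$ is odd, forcing the local Kummer class, i.e.\ the valuation parity, to be trivial), and this is exactly the content of the cited proposition; but as written your appeal to ``the \'etale cover of $\mathcal{C}\setminus f^{-1}(0)$ by $\{y^2=x_0-x\}$'' is not quite the right object --- the Kummer sequence must be run on $J$ (or on the fiber product of $\mathcal{C}$ with $[2]:J\to J$), not on a curve depending on the point $x_0$. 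Since you flag this step as the main obstacle and the paper itself defers it to a reference, I would not call this a gap so much as the one place where your self-contained route still needs the local computation written out.
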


\begin{proof}
If $\mathcal{C}$ is an elliptic curve, then this map $\delta$ is obtained by composing the classical $2$-descent map deduced from the Kummer exact sequence with the map induced (on the $H^1$) by the Weil pairing with the generic $2$-torsion point. In general, $\delta$ is the composition of the analogous map on the Jacobian of $\mathcal{C}$ with the embedding of $\mathcal{C}$ into its Jacobian relative to the point at infinity. One can work this out from the description given by Schaefer \cite[Theorem~1.2]{schaefer95}, including the case when $f(x_0)=0$ \cite[Lemma~2.2]{schaefer95}.

Finally, the argument proving that $\delta$ has values in the group $H^1(C_f\setminus \pi^{-1}(\Sigma_2),\mu_2)$ is the same as in the proof of Proposition~4.1 of \cite{GHL23}.
\end{proof}


\subsection{Upper bound for rational points of given height}

As in the introduction, we denote by $\mathcal{C}(k(B))_{\leq c}$ the set of $k(B)$-rational points $(x_0,y_0)$ with $\deg(x_0)\leq c$. The main result of this section is the following.

\begin{theorem}
\label{thm:nbratpoint}
Let $\mathcal{C}$ be the hyperelliptic curve over $k(B)$ defined by the equation $y^2=f(x)$, where $f \in k(B)[x]$ is a monic separable polynomial of odd degree $d\geq 3$. Let $h_B(f)$ be the height of $f$, let $C_f$ be the curve defined over $k$ by the equation $f(x)=0$, and let $\omega(f)$ be the number of irreducible components of $C_f$. If $C_f$ is irreducible, we denote its genus by $g_f$.

If $c$ is a positive integer, we let
$$
\Omega(c,f,g) :=
\begin{cases}
 \max\left\{d(c+g)+h_B(f)-g_f, \frac{1}{2}(d(c+g)+h_B(f))\right\} & \text{if $C_f$ is irreducible} \\
 d(c+g)+h_B(f) + \omega(f) -1 & \text{otherwise.}
\end{cases}
$$
Assume that one of (A1), (A2) or (A3) holds. Then there are at most
$$
2^{\Omega(c,f,g) + \max\{c, \frac{1}{2}(c+g)\} +1}
$$
points in the set $\mathcal{C}(k(B))_{\leq c}$ mapping (under $\delta$) to the same class in $H^1(C_f\setminus \pi^{-1}(\Sigma_2),\mu_2)$, where $\Sigma_2$ and $\delta$ are defined in Lemma~\ref{lem:descentmap}.
\end{theorem}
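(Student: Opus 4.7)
Fix a class $\alpha$ in the image of $\delta$ and a base point $P_0=(x_0,y_0)$ in the fiber. For any other $P_1=(x_1,y_1)$ in the fiber with $f(x_i)\neq 0$, the equality $\delta(P_0)=\delta(P_1)$ forces $(x_0-x)(x_1-x)$ to be a square in $k(C_f)^\times$, hence equal to $h^2$ for some $h\in k(C_f)^\times$ unique up to sign. Conversely $h$ determines $x_1=x+h^2/(x_0-x)$ (and then $y_1$ up to sign), so the fiber has cardinality at most twice the number of such $\pm h$. The cases with $f(x_i)=0$ are handled analogously using the alternative formula of Lemma~\ref{lem:descentmap}. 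Thus the problem is to bound the number of possible $\pm h$.

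The naive bound $\deg_{C_f}(\mathrm{pole}(h))\le dc+h_B(f)$ involves a divisor that depends on the particular pair $(P_0,P_1)$, so one cannot yet constrain $h$ to a fixed linear system. To uniformize, I fix an auxiliary effective divisor $A$ on $B$ of degree $c+g$ and use Riemann--Roch on $B$: since $h^0(A-E)\ge 1$ for any effective $E$ of degree $\le c$, there exists for each $P_i$ a nonzero multiplier $\varphi_i\in L(A-\mathrm{pole}(x_i))$ absorbing the poles of $x_i$. Consequently $\varphi_i(x_i-x)$ lies in the fixed linear system $L_{C_f}(\pi^*A+F_x)$, where $F_x=\mathrm{pole}_{C_f}(x)$ has degree $h_B(f)$. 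After a careful choice of these multipliers, exploiting the multiplicative structure in $h^2=(x_0-x)(x_1-x)$, one shows that an appropriately normalized function built from $h$ lies in a fixed linear system $L_{C_f}(D)$ on $C_f$ for $D$ of degree $\Omega_0:=d(c+g)+h_B(f)$, while $\varphi_1x_1$ lies in $L_B(A)$.

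Riemann--Roch together with Clifford's theorem gives, for an effective divisor of degree $n$ on a smooth irreducible curve of genus $\gamma$, the bound $h^0\le\max\{n-\gamma+1,\ n/2+1\}$; the two branches are precisely those appearing in $\Omega(c,f,g)$ and in $\max\{c,(c+g)/2\}$. Applied on $C_f$ (genus $g_f$, degree $\Omega_0$) this yields $\dim L_{C_f}(D)\le\Omega+1$; applied on $B$ it yields $\dim L_B(A)\le\max\{c,(c+g)/2\}+1$. When $C_f$ is reducible the whole argument is carried out componentwise through the splitting $k(C_f)=\prod_i k(C_i)$, which produces the term $\omega(f)-1$ in $\Omega$. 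The final count interprets each $P_1$ as an intersection point of the Veronese image of $\PP L_{C_f}(D)$ (of degree $2^{\Omega}$ in $\PP L_{C_f}(2D)$) with the linear subvariety coming from the $k(B)$-rationality and height constraints on $x_1$; Bezout's theorem, together with an accounting for the sign ambiguities (for $h$ and for $y_1$), yields the bound $2^{\Omega+\max\{c,(c+g)/2\}+1}$.

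The hardest step is obtaining the sharp uniform degree $\Omega_0=d(c+g)+h_B(f)$ in the second step rather than a multiple of it: the most naive attempts (for example using $\varphi_0\varphi_1 h$ directly) land in a linear system of roughly twice the expected degree, and the correct construction must exploit the cancellation inside $h^2=(x_0-x)(x_1-x)$. The Bezout step similarly requires care to check that the relevant intersection has the expected dimension in both the non-special range ($\Omega=\Omega_0-g_f$) and the Clifford range ($\Omega=\Omega_0/2$), and in the latter case one must argue directly from Clifford's inequality rather than from Riemann--Roch.
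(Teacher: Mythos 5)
Your overall strategy is the same as the paper's: interpret the fiber of $\delta$ via squares $(x_0-x)(x_1-x)=h^2$, clear denominators with Riemann--Roch on $B$, bound the relevant linear systems by Riemann--Roch/Clifford, and count via a Veronese image and B\'ezout. However, the two steps you yourself flag as hardest are exactly where the argument is incomplete, and one of them contains an inconsistency.

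First, the sharp degree $\Omega_0=d(c+g)+h_B(f)$ is asserted but not obtained. The paper's mechanism is the following: setting $\psi_i=v_i^2(x_i-x)$, the fact that $\psi_0$ represents a class in $H^1(C_f\setminus D_2,\mu_2)$ forces $\divisor(\psi_0)=2E_0+\sum_i P_i-2(c+g)D_0-2D_\infty$ with $E_0$ effective and the $P_i$ supported on $D_2$; hence $2E_0\sim 2(c+g)D_0+2D_\infty-\sum_i P_i$ gives $\deg E_0\le d(c+g)+h_B(f)$, and $\phi=v_0v_1h$ lies in $L(2(c+g)D_0+2D_\infty-E_0-\sum_i P_i)$, a system linearly equivalent to $E_0$. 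Without invoking the evenness of $\divisor(\psi_0)$ away from $D_2$ (i.e.\ without using where $\delta$ takes its values, per Lemma~\ref{lem:descentmap}), you cannot halve the naive degree $2(d(c+g)+h_B(f))$, and your ``appropriately normalized function built from $h$'' remains undefined. Second, your B\'ezout step describes the second intersection factor as a \emph{linear} subvariety encoding the rationality and height constraints on $x_1$. For general $x_1\in k(B)$ of degree $\le c$ this set is not linear: one must write $x_1=u_1/v_1$ with $u_1,v_1\in L_B((c+g)P_0)$, and the locus of functions $[u_1-v_1^2x]$ is the image of a weighted projective space under a map quadratic in $v_1$, of degree up to $2^{n'}$ with $n'=h^0((c+g)P_0)-1$. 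If the second factor really were linear, B\'ezout would give only $2^{\Omega}$ components and the term $\max\{c,\tfrac{1}{2}(c+g)\}$ in the exponent would be superfluous --- that is precisely the improvement available for \emph{integral} points over $\PP^1$ (Proposition~\ref{prop:P1nbintpoint}), not for rational points. As written, your count of $2^{\Omega+\max\{c,\frac{1}{2}(c+g)\}+1}$ does not follow from the geometry you describe. Finally, the points with $f(x_0)=0$ are not ``handled analogously'': Lemma~\ref{x1lem} and the square identity require $y_0\neq 0$, and the paper absorbs these at most $\omega(f)$ extra points by passing to a modified descent map with values in $H^1(C_\varphi\setminus D_2',\mu_2)$, gaining a factor of $2$. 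This last point is minor, but the first two are genuine gaps.
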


Before proving the theorem, we give two preliminary lemmas.

\begin{lemma}
\label{x1lem}
Let $(x_0,y_0) \in \mathcal{C}(k(B))$ be a rational point with $y_0\neq 0$,
and let $x_1\in k(B)$ be such that $x_1-x$ and $x_0-x$ define the same class in $k(C_f)^\times/(k(C_f)^\times)^2$. Then there exists $y_1\in k(B)$ such that $(x_1,y_1)$ belongs to $\mathcal{C}(k(B))$. In particular, assuming that one of (A1), (A2) or (A3) holds, there are only finitely many $x_1\in k(B)$ with $\deg x_1 \leq c$ such that $x_1-x$ and $x_0-x$ define the same class in $k(C_f)^\times/(k(C_f)^\times)^2$.
\end{lemma}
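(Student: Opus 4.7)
The plan is to use the norm map $N:k(C_f)^\times \to k(B)^\times$, which on each factor $k(C_i)$ of $k(C_f)=\prod_i k(C_i)$ is the usual field norm. Writing $f = \prod_i f_i$ with $f_i$ the irreducible factors over $k(B)$ and $x_i$ the image of $X$ in $k(C_i)=k(B)[X]/f_i(X)$, one checks by the characteristic-polynomial description of the norm that
\[
N(a-x) \;=\; \prod_i N_{k(C_i)/k(B)}(a-x_i) \;=\; \prod_i f_i(a) \;=\; f(a)
\]
for every $a \in k(B)$. In particular $N(x_0-x) = f(x_0) = y_0^2$.

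Now assume $x_1 - x$ and $x_0-x$ define the same class in $k(C_f)^\times/(k(C_f)^\times)^2$. Then there is $u \in k(C_f)^\times$ with $x_1 - x = (x_0-x)\, u^2$. Applying the norm,
\[
f(x_1) \;=\; N(x_1-x) \;=\; N(x_0-x)\cdot N(u)^2 \;=\; y_0^2\, N(u)^2,
\]
so setting $y_1 := y_0\, N(u) \in k(B)$ gives $y_1^2 = f(x_1)$, i.e.\ $(x_1,y_1) \in \mathcal{C}(k(B))$, which proves the first assertion.

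The finiteness assertion is then immediate: each $x_1 \in k(B)$ as in the hypothesis is the $x$-coordinate of some point of $\mathcal{C}(k(B))$, and under (A1), (A2) or (A3) the Northcott property from the preceding proposition says that $\mathcal{C}(k(B))_{\leq c}$ is finite; hence only finitely many such $x_1$ can have $\deg x_1 \leq c$. The only step requiring any care is the identification of the norm of $a-x$ in the (possibly non-field) algebra $k(C_f)$ with $f(a)$, which is handled component-wise via the factorization of $f$; once this is in hand everything else is formal.
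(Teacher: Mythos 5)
Your proof is correct and follows essentially the same route as the paper: both compute the norm $\Norm_{C_f/B}(a-x)=f(a)$ (the paper via the determinant of multiplication in the basis $\{1,x,\dots,x^{d-1}\}$, you component-wise over the factors of $f$), apply it to the relation $x_1-x=(x_0-x)u^2$ to produce $y_1=y_0\Norm(u)$, and then invoke the Northcott property. The two descriptions of the norm are equivalent, so there is no substantive difference.
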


\begin{proof}
Let us consider the norm map $\Norm_{C_f/B}:k(C_f)^\times\to k(B)^\times$. By definition, $\Norm_{C_f/B}(x_0-x)$ is the determinant of the $d\times d$ matrix (with coefficients in $k(B)$) corresponding to multiplication by $x_0-x$ in the basis $\{1,x,\dots,x^{d-1}\}$ of $k(C_f)/k(B)$. This is equal to the value at $x_0$ of the minimal polynomial of $x$, in other terms
\begin{equation}
\label{eq:normCf}
\Norm_{C_f/B}(x_0-x) = f(x_0).
\end{equation}

Now, let $x_1\in k(B)$ be such that $x_1-x$ and $x_0-x$ define the same class in $k(C_f)^\times/(k(C_f)^\times)^2$. This means that there exists a rational function $\phi$ on $C_f$ such that $x_1-x = (x_0-x)\phi^2$. In particular, we have the relation
$$
\Norm_{C_f/B}(x_1-x) = \Norm_{C_f/B}(x_0-x)\Norm_{C_f/B}(\phi)^2
$$
which, according to \eqref{eq:normCf}, can be written as
$$
f(x_1)=f(x_0)\Norm_{C_f/B}(\phi)^2.
$$

Since $(x_0,y_0)$ is a point on $\mathcal{C}$, we have $f(x_0)=y_0^2$, hence it follows from the above relation that $f(x_1)=(y_0\Norm_{C_f/B}(\phi))^2$, which proves the first claim, letting $y_1:=y_0\Norm_{C_f/B}(\phi)$.

The conclusion follows from Northcott's property (see \S{}\ref{sec:heights}): under (A1), (A2) or (A3) there are only finitely many rational points $(x_1,y_1)$ on $\mathcal{C}$ with $\deg x_1 \leq c$.
\end{proof}

Recall that for a function $z$ we let $\divisor(z)= (z)_0 - (z)_\infty$.
In the following Lemma, we prove that every function on the curve~$B$ can be written as the quotient of two functions with poles concentrated on a given
point.

\begin{lemma}
\label{lem:x0denominator}
Let $x_0\in k(B)$ of degree $\leq c$, and let $P_0$ be a closed point of $B$. 
Then there exist two functions $u_0, v_0 \in L_B((c+g)P_0)$ such that~$x_0=u_0/v_0$. In particular:
\begin{align*}
& (v_0)_\infty \leq (c+g)P_0
&
&\text{and}
&
& (v_0x_0)_\infty = (u_0)_\infty \leq (c+g)P_0.
\end{align*}
\end{lemma}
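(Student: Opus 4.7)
\medskip
\noindent\textbf{Proof proposal.} The plan is a one-shot application of Riemann--Roch on $B$: find the denominator $v_0$ first as a function that kills the poles of $x_0$ while having its own poles concentrated at $P_0$, and then take $u_0 := v_0 x_0$.

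More precisely, let $D_0 := (x_0)_\infty$ be the polar divisor of $x_0$, so $D_0 \geq 0$ and $\deg D_0 = \deg(x_0) \leq c$. Consider the divisor
\[
E := (c+g)P_0 - D_0,
\]
which satisfies $\deg E = c+g-\deg D_0 \geq g$. By the Riemann--Roch inequality $\ell(E)\geq \deg E + 1 - g$, one gets $\ell(E) \geq 1$, so there exists a nonzero function $v_0 \in L_B(E)$. By definition of $L_B(E)$, this means $\divisor(v_0) + (c+g)P_0 - D_0 \geq 0$, or equivalently
\[
(v_0)_\infty \leq (c+g)P_0 \qquad\text{and}\qquad (v_0)_0 \geq D_0.
\]
In particular $v_0 \in L_B((c+g)P_0)$.

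Now set $u_0 := v_0 x_0$. The poles of $u_0$ can only come from the poles of $v_0$ (outside $P_0$, $v_0$ is regular) and from the poles of $x_0$, which lie in $D_0$; but at each point of $D_0$ the zero of $v_0$ has order at least the pole order of $x_0$, so these poles are cancelled. Hence all the poles of $u_0$ are concentrated at $P_0$, with order at most $c+g$, giving $u_0 \in L_B((c+g)P_0)$. By construction $x_0 = u_0/v_0$, and the two displayed inequalities on $(v_0)_\infty$ and $(u_0)_\infty = (v_0 x_0)_\infty$ in the statement follow immediately.

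The argument is essentially routine: the only real step is the Riemann--Roch estimate, and the pole cancellation at $D_0$ is forced by the choice of $v_0$. There is no genuine obstacle; the lemma is a standard ``clearing denominators with controlled poles'' construction, and the specific constant $c+g$ is exactly what Riemann--Roch requires to guarantee a nonzero section.
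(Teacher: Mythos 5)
Your proof is correct and takes essentially the same route as the paper's: both apply Riemann--Roch to the degree~$\geq g$ divisor $(c+g)P_0-(x_0)_\infty$ to produce $v_0$, and then take $u_0=v_0x_0$. (One cosmetic remark: if $P_0$ lies in the support of $D_0=(x_0)_\infty$ then your intermediate assertion $(v_0)_0\geq D_0$ need not hold literally, but the inequality you actually use, $\divisor(v_0x_0)+(c+g)P_0\geq 0$, follows directly from $\divisor(v_0)\geq D_0-(c+g)P_0$, so nothing is lost.)
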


\begin{proof}
Since $-(x_0)_\infty +(c+g)P_0$ has degree $\geq g$, by Riemann-Roch there exists a function $v_0 \in k(B)^\times$ such that
$$
\divisor(v_0) - (x_0)_\infty +(c+g)P_0 \geq 0.
$$
It follows that $\divisor(v_0x_0) +(c+g)P_0 \geq 0$ and that $\divisor(v_0) +(c+g)P_0 \geq 0$ (as divisors on $B$).
\end{proof}

\begin{proof}[Proof of Theorem~\ref{thm:nbratpoint}]
In coherence with Lemma~\ref{lem:descentmap}, we let $D_2:=\pi^{-1}(\Sigma_2)$. Let us pick an arbitrary closed point $P_0$ of $B$, 
and let $D_0:=\pi^*(P_0)$.

Let $(x_0,y_0)\in \mathcal{C}(k(B))$ be a rational point on $\mathcal{C}$, such that $\deg x_0\leq c$ and $y_0\neq 0$.
Then according to Lemma~\ref{lem:x0denominator} there exist two functions $u_0,v_0 \in L((c+g)P_0)$ such that~$x_0=u_0/v_0$. On~$C_f$,
this leads to
$$
(v_0^2 x_0)_\infty \leq (v_0)_\infty + (u_0)_\infty \leq 2(c+g)D_0,
$$
and to
$$
(v_0^2 x)_\infty \leq 2(v_0)_\infty + (x)_\infty \leq 2(c+g)D_0 + (x)_\infty.
$$
Since the order of a pole of a sum is bounded above by the maximum of the order of the poles of each term,
we deduce that
$$
(v_0^2x_0-v_0^2 x)_\infty \leq 2(c+g)D_0 + (x)_\infty,
$$
or equivalently that
\begin{align}
\label{eq:v0squared}
\divisor(v_0^2x_0-v_0^2x) \geq -2(c+g)D_0 - (x)_\infty.
\end{align}

We let
$$
D_\infty:= \sum_{P~\text{pole of}~x} -\lfloor\ord_P(x)/2\rfloor.P
$$
By construction, $D_\infty$ is an effective divisor whose support is the same as $\divisor(x)_\infty$, and
\begin{equation}
\label{eq:D_x_infty}
D_\infty \leq (x)_\infty\leq 2D_\infty
\end{equation}

Let $\psi_0 :=v_0^2x_0-v_0^2x$. Then according to Lemma~\ref{lem:descentmap}, $\psi_0$ defines a class in $H^1(C_f\setminus D_2,\mu_2)$, which means that $v(\psi_0) \equiv 0 \pmod{2}$ for all $v\not\in D_2$. Summing all this up we have
\begin{equation}
\label{eq:divpsi0}
\divisor(\psi_0)=2E_0+\sum_{i=1}^s P_i -2(c+g)D_0 - 2D_\infty,
\end{equation}
where $E_0$ is an effective divisor and the points $P_i$ belong to the support of $D_2$. If a point $P_i$ appears with multiplicity two or more, one sends it inside $E_0$. If a point $P_i$ is a pole of odd order of $x$, it also appears in $D_\infty$. Then we note that $E_0$, $D_0$, $D_\infty$ and $P_i$ may have points in common.

Let $x_1$ be a rational function on $B$ with $\deg x_1\leq c$ such that $x_1-x$ and $x_0-x$ define the same class in $k(C_f)^\times/(k(C_f)^\times)^2$. As we did previously for $\psi_0$, let $\psi_1=v_1^2x_1-v_1^2x$. Then we may again write
$$
\divisor(\psi_1)=2E_1+\sum_{j=1}^l Q_j -2(c+g)D_0 - 2D_\infty,
$$
for some effective divisor $E_1$ and some points $Q_j$ in the support of $D_2$. Since $\psi_0$ and $\psi_1$ define the same class in a $2$-torsion group, we find that $\psi_0\psi_1$ is the trivial class, so is equal to $\phi^2$ for some rational function $\phi$.  Since the divisor of $\phi^2$ has even coefficients, we deduce that $\sum_{j=1}^l Q_j + \sum_{i=1}^s P_i$ has even coefficients, hence these two divisors agree. Therefore,
\begin{align*}
\divisor(\phi)=E_0+E_1+\sum_{i=1}^s P_i -2(c+g)D_0 - 2D_\infty.
\end{align*}
In particular, $\phi\in L(2(c+g)D_0 + 2D_\infty -E_0-\sum_{i=1}^sP_i)$.  Let us denote by~$V$ this space of functions and let~$\phi_0,\ldots, \phi_n$ be a basis for~$V$, where $n=h^0(2(c+g)D_0 + 2D_\infty -E_0-\sum_{i=1}^sP_i)-1$.
Then we may write $\phi=a_0\phi_0+\cdots +a_n\phi_n$ for some $a_i\in k$.  It follows that
\begin{align*}
[\phi^2]=[\psi_0\psi_1]=[(a_0\phi_0+\cdots +a_n\phi_n)^2],
\end{align*}
or
\begin{align*}
\left[\frac{\phi^2}{\psi_0}\right] =
[\psi_1]&=\left[\frac{1}{\psi_0}(a_0\phi_0+\cdots +a_n\phi_n)^2\right]\\
&=\left[\sum_{i,j}a_ia_j\frac{\phi_i\phi_j}{\psi_0}\right]
\end{align*}
(brackets mean projectively, \emph{i.e.} up to a non-zero constant of~$k$). The last equality takes place
inside~$\mathbb{P}\left(L(2(c+g)D_0 + 2D_\infty)\right)$,
and the map~$[\phi] \mapsto \left[\frac{\phi^2}{\psi_0}\right]$ is part of the following diagram:
$$
\begin{tikzpicture}[>=latex]
\matrix[matrix of math nodes,row sep=0.6cm,column sep=1cm]
{
                                                        & |(Lsym)| \mathbb{P}\left(\Sym^2(V)\right) &[-2cm]\\
\\
|(L)| \mathbb{P}\left(V\right) & |(W)| W & \subset\mathbb{P}\left(L(2(c+g)D_0 + 2D_\infty)\right) \\[-0.5cm]
|(phi)| [\phi] & |(phi2)| \left[\frac{\phi^2}{\psi_0}\right]\\
};
\draw[->, bend left] (L) to node[above,midway,anchor=east] {\text{$2$-uple Veronese}} (Lsym.west) ;
\draw[->] (Lsym.south) to node[right,midway,anchor=west] {\text{linear projection}} (W.north) ;
\draw[->] (L.east) to (W.west) ;
\draw[|->] (phi.east) to (phi2.west) ;
\end{tikzpicture}
$$
The linear projection takes into account the fact that the functions~$\frac{\phi_i\phi_j}{\psi_0}$ may not be linearly independent.
In any case, since the image of the 2-uple Veronese is known to be of degree~$2^n$ and since the degree can only decrease under a linear projection, 
the image~$W$ of the bottom map has degree bounded by~$2^n$.
By construction, elements of $W$ are functions which define the same class as $\psi_0$ in $H^1(C_f\setminus D_2,\mu_2)$.

On the other hand, we denote by $L_B((c+g)P_0)$ the Riemann-Roch space computed on $B$ (unadorned linear spaces being computed on $C_f$), and
inside~$\mathbb{P}\left(L(2(c+g)D_0 + 2D_\infty)\right)$ we consider the image $W'$ of the map
\begin{align*}
\mathbb{P}^{2,1}(L_B(2(c+g)P_0)\times L_B((c+g)P_0)) &\rightarrow \mathbb{P}\left(L(2(c+g)D_0 + 2D_\infty)\right) \\
[u_1,v_1] &\mapsto [u_1-v_1^2x].
\end{align*}
Here, $\mathbb{P}^{2,1}(L_1\times L_2)$ is the weighted projective space obtained by modding out the vector space $L_1\times L_2$ by the equivalence relation $(\lambda^2u,\lambda v)\sim (u,v)$.
By the same argument as above, involving a $2$-uple Veronese on the second linear factor, the subvariety $W'$ has degree bounded by $2^{n'}$, where $n'=h^0((c+g)P_0)-1$. Note that $W'$, unlike $W$, does not depend on $x_0$; by construction, elements of $W'$ are all functions (up to a multiplicative constant) of the form $u_1-v_1^2x$ where $u_1$ and $v_1$ are chosen in suitable Riemann-Roch spaces.

Consider the subvariety~$W_0'$ of $W'$ where $v_1\neq 0$, and the subvariety $W_0'\cap W$. We define a map
\begin{align*}
\alpha: (W_0'\cap W)(k) &\rightarrow k(C_f)^\times \\
[u_1-v_1^2x] &\mapsto \frac{u_1}{v_1^2} - x.
\end{align*}

By the construction of $W$ and $W'$, and by virtue of Lemma~\ref{lem:x0denominator}, the image of $\alpha$ contains the set of functions $x_1-x$ with $\deg x_1\leq c$ which define the same class as $x_0-x$ in $k(C_f)^\times/(k(C_f)^\times)^2$. According to Lemma~\ref{x1lem} this set is finite; let us call $m$ its cardinality.

We note that the image of $\alpha$ may be larger than the desired set, but is in any case finite, since $\deg(u_1)\leq 2(c+g)$ and $\deg(v_1^2)\leq 2(c+g)$ implies that $\deg(\frac{u_1}{v_1^2})\leq 4(c+g)$ (in fact, the functions $u_1$ and $v_1^2$ having a unique pole at the same point $P_0$, this can be reduced to $2(c+g)$), and we apply Lemma~\ref{x1lem} again. So the image of $\alpha$ is a finite set of functions, say $(x_0-x),(x_1-x),\ldots, (x_{M-1}-x)$, with $M\geq m$.

We claim that for all $i\in\{0,\dots,M-1\}$, $\alpha^{-1}(x_i-x)$ is the set of closed points of a Zariski closed subset of $W_0'\cap W$. Indeed, consider the space of functions 
\begin{align*}
V_i&=\{v(x_i-x)\mid v\in k(B), v(x_i-x)\in   L_B(2(c+g)P_0) + L_B(2(c+g)P_0)x\}\\
&=\{v(x_i-x)\mid v\in L_B(2(c+g)P_0), vx_i\in L_B(2(c+g)P_0)\}.
\end{align*}
From the latter description, $V_i$ is clearly a linear subspace of $L(2(c+g)D_0 + 2D_\infty)$. From the former description, $\alpha^{-1}(x_i-x)=\mathbb{P}(V_i)\cap (W_0'\cap W)(k)$, and the claim follows. Let $W_i=\mathbb{P}(V_i)\cap (W_0'\cap W)$. Then $W_0'\cap W=\cup_{i=0}^{M-1} W_i$, and from the definitions $W_i\cap W_j=\emptyset$ if $i\neq j$. It follows that $W'\cap W$ must have at least $M\geq m$ irreducible components. On the other hand, since $\deg W\leq 2^n$ and $\deg W'\leq 2^{n'}$, by a suitable version of B{\'e}zout's theorem \cite[Example 8.4.6]{Fulton98}, $W\cap W'$ has at most $2^{n+n'}$ irreducible components. Therefore $m\leq 2^{n+n'}$, and it follows that the number of points in ${\mathcal{C}}(k(B))_{\leq c}$ having the same image by the $2$-descent map is bounded above by $2^{n+n'+1}$ (we pick up an extra factor of $2$ since there are two rational points $(x_1,y_1)\in \mathcal{C}(k(B))$ corresponding to each $x_1$).

Finally, we note that $2(c+g)D_0 + 2D_\infty -\sum_{i=1}^sP_i\sim 2E_0$ by \eqref{eq:divpsi0}, and since $\deg(\sum_{i=1}^sP_i)\geq 0$ it follows that
\begin{align*}
\deg E_0 &\leq \deg\left((c+g)D_0 + D_\infty\right) \\
        &\leq d(c+g)+\deg x & \text{by \eqref{eq:D_x_infty}} \\
        &\leq d(c+g)+ h_B(f) & \text{by \eqref{eq:deg-height2}}
\end{align*}
where $h_B(f)$ denotes the height of the polynomial $f$, computed on the base curve $B$. We also derive from \eqref{eq:divpsi0} that $2(c+g)D_0 + 2D_\infty -E_0-\sum_{i=1}^sP_i\sim E_0$, hence
$$
n=h^0(2(c+g)D_0 + 2D_\infty -E_0-\sum_{i=1}^sP_i)-1=h^0(E_0)-1
$$
Assuming first that $C_f$ is irreducible, we have
\begin{align*}
n=h^0(E_0)-1 &\leq \max\{\deg E_0-g_f, \frac{1}{2}\deg E_0\}\\
  &\leq \max\left\{d(c+g)+h_B(f)-g_f, \frac{1}{2}(d(c+g)+h_B(f))\right\}
\end{align*}
by Riemann-Roch and Clifford's theorem. Similarly,
\begin{align*}
n'&=h^0((c+g)P_0))-1 \\
&\leq \max\{c, \frac{1}{2}(c+g)\}
\end{align*}
and the result follows. If $C_f$ is not irreducible, then the upper bound on $n$ no longer holds. However, since the divisor $E_0$ is effective, we have \cite[\S{}7.3.2, Prop.~3.25]{liu}
$$
h^0(E_0) \leq \deg E_0 + \dim_k H^0(C_f,\mathcal{O}_{C_f}) \leq \deg E_0 + \omega(f)
$$
which yields, by the same reasoning as above, an upper bound on the number of points which are not of the form ($x_0,0)$ and map to the same class.

We now consider points of the form $(x_0,0) \in \mathcal{C}(k(B))$, if any. The strategy is the following: if $f$ has a root $x_0$ then, by considering a modified $2$-descent map, one can slightly improve on the previous bound for all $c>0$, and the difference between the improved bound and the one in the statement is larger than $\omega(f)$, which is an obvious upper bound on the number of points of the form $(x_0,0)$. This argument does not requires us to consider the image of points $(x_0,0)$ by the $2$-descent map $\delta$.

Let us construct this modified $2$-descent map. Assume that $x_0\in k(B)$ is a root of $f$, and let us write $f(X)=(X-x_0)\varphi(X)$ for some monic polynomial $\varphi$. Then  $C_f=B \cup C_\varphi$ (disjoint union of smooth curves), where $C_\varphi$ is the $k$-curve defined by $\varphi=0$. We have
\begin{equation}
\label{eq:coh_splitting}
H^1(C_f\setminus D_2,\mu_2) = H^1(B\setminus \Sigma_2,\mu_2) \oplus H^1(C_\varphi\setminus D_2',\mu_2)
\end{equation}
where $D_2'$ is the restriction of $D_2$ to $C_\varphi$. We observe (Lemma~\ref{x1lem}) that the $2$-descent map $\delta$ takes its values in the kernel of the norm map $H^1(C_f\setminus D_2,\mu_2) \to H^1(B\setminus \Sigma_2,\mu_2)$. If we represent a class in $H^1(C_f\setminus D_2,\mu_2)$ as a couple $(\mu,\nu)\in k(B)^\times \times k(C_\varphi)^\times$ (modulo squares), then the norm of this class is represented by $\mu\cdot N_\varphi(\nu)$ (modulo squares) where $N_\varphi$ it the norm relative to $k(C_\varphi)/k(B)$. It follows that projecting on the second factor in \eqref{eq:coh_splitting} yields an isomorphism between the kernel of the norm map and $H^1(C_\varphi\setminus D_2',\mu_2)$, the inverse map being given by $\nu\mapsto (N_\varphi(\nu),\nu)$. Therefore, by composing $\delta$ with the projection onto the second factor of \eqref{eq:coh_splitting} we obtain a modified $2$-descent map $\mathcal{C}(k(B))\to H^1(C_\varphi\setminus D_2',\mu_2)$, defined by the same formula and having the same properties as the original one. The same counting argument applies to this modified $2$-descent map, with the small improvement that there is one component less on the curve $C_\varphi$, so that $\omega(f)$ is replaced by $\omega(f)-1$. The resulting bound is half the size of the previous bound, hence the result.
\end{proof}

When the base curve $B$ is the projective line, one can improve on Theorem~\ref{thm:nbratpoint} under the additional assumption that $f$ has coefficients in $k[t]$ (which can be achieved after a suitable change of coordinates).

\begin{proposition}
\label{prop:P1nbratpoint}
Assume that $\mathcal{C}$ is defined over $k(t)=k(\PP^1)$, and that the affine equation $y^2=f(x)$ of $\mathcal{C}$ has coefficients in $k[t]$. Then for an integer $c>0$, there are at most
\begin{align*}
2^{\Omega\left(\left\lceil\frac{c}{2}\right\rceil,f,0\right) +\left\lceil\frac{c}{2}\right\rceil+1}
\end{align*}
points in the set $\mathcal{C}(k(t))_{\leq c}$ mapping (under $\delta$) to the same class in $H^1(C_f\setminus \pi^{-1}(\Sigma_2),\mu_2)$, where $\Sigma_2$ and $\delta$ are defined in Lemma~\ref{lem:descentmap}.
\end{proposition}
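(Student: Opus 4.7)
The plan is to mirror the proof of Theorem~\ref{thm:nbratpoint}, now specialized to $B=\PP^1$ and exploiting two features of this setting: the flexibility afforded by two canonical points on the projective line, and the fact that $f \in k[t][x]$ forces the polar divisor $(x)_\infty$ on $C_f$ to be supported over the single point at infinity. Set $m := \lceil c/2 \rceil$ and fix two distinct points $P_0, P_\infty \in \PP^1$, with $P_\infty$ being the point at infinity in the coordinate~$t$.

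The first step is to establish a refinement of Lemma~\ref{lem:x0denominator} valid on $\PP^1$: for every $x_0 \in k(t)$ with $\deg x_0 \leq c$, there exist $u_0, v_0 \in L_B(mP_0 + mP_\infty)$ such that $x_0 = u_0/v_0$. Indeed, writing $x_0 = p(t)/q(t)$ in lowest form with $\max(\deg p, \deg q) \leq c \leq 2m$, one picks an integer $a$ satisfying $\max(\deg p, \deg q) - m \leq a \leq m$ (possible precisely because $c \leq 2m$) and sets $u_0 = p/t^a$, $v_0 = q/t^a$; both are Laurent polynomials with pole orders at $P_0$ and $P_\infty$ bounded by $m$. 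This is the $\PP^1$-analogue of Riemann--Roch padding with the added economy of a two-pole divisor of degree~$2$.

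Next, I would rerun the divisor bookkeeping of Theorem~\ref{thm:nbratpoint}, setting $D_0 := \pi^*(P_0 + P_\infty)$ on $C_f$ and replacing every occurrence of $(c+g)$ by $m$. The function $\psi_0 = v_0^2(x_0 - x)$ satisfies the analogue of \eqref{eq:v0squared}, and the $\phi^2$-decomposition produces an effective divisor $E_0$ with $\deg E_0 \leq dm + h_B(f)$, hence $h^0(E_0) - 1 \leq \Omega(m, f, 0)$. For the parametrization side, I would use the weighted projective space $\mathbb{P}^{2,1}(L_B(2m(P_0 + P_\infty)) \times L_B(m(P_0 + P_\infty)))$; via the linear equivalence $P_0 + P_\infty \sim 2P_\infty$ on $\PP^1$, multiplication by $t^m$ identifies $L_B(m(P_0 + P_\infty))$ with $L_B(2mP_\infty) \cong k[t]_{\leq 2m}$, and one would argue that the $2$-uple Veronese contribution to the degree of $W'$ is $2^m$ rather than the naive $2^{2m}$.

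The principal obstacle lies in this last Veronese estimate: one must carefully account for the one-parameter redundancy in the representation $x_0 = u_0/v_0$ (the shift $t^a$) to reduce the dimension of the parametrizing variety from $2m$ to~$m$, so that the B\'ezout-type bound on $W'$ contributes only $2^m$. Once this step is verified, the remaining ingredients of the proof of Theorem~\ref{thm:nbratpoint}---the $\psi_0\psi_1 = \phi^2$ identification, the B\'ezout intersection between $W$ and $W'$, the finiteness of fibers under the map $\alpha$ (via Lemma~\ref{x1lem}), and the modified $2$-descent handling points $(x_0, 0)$ through $H^1(C_\varphi \setminus D_2', \mu_2)$---all transfer verbatim and yield the stated bound $2^{\Omega(\lceil c/2\rceil,f,0) + \lceil c/2\rceil + 1}$.
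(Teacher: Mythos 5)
There is a genuine gap, and it is not only the Veronese step you flag. The mechanism the paper uses to halve the exponents is arithmetic, not geometric: since $y_0^2=f(x_0)$ with $f$ monic of odd degree $d$ and coefficients in $k[t]$, every finite pole $v$ of $x_0$ satisfies $2v(y_0)=dv(x_0)$, so $v(x_0)$ is even; hence $x_0=u_0/e^2$ with $u_0,e$ coprime polynomials and $\max\{\deg u_0,2\deg e\}=\deg x_0\leq c$. Taking $v_0=e$ gives $v_0\in L_B\left(\left\lceil\frac{c}{2}\right\rceil\infty\right)$ and $v_0^2x_0=u_0\in L_B\left(2\left\lceil\frac{c}{2}\right\rceil\infty\right)$, which is exactly what lets one replace $c+g$ by $\left\lceil\frac{c}{2}\right\rceil$ everywhere in the proof of Theorem~\ref{thm:nbratpoint}. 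Your decomposition $x_0=(p/t^a)/(q/t^a)$ only redistributes a denominator of total degree up to $c$ between the two points $P_0$ and $P_\infty$; it does not shrink it.

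Concretely, two things fail in your setup. First, with $D_0=\pi^*(P_0+P_\infty)$ of degree $2d$, the divisor identity $\divisor(\psi_0)=2E_0+\sum_i P_i-2mD_0-2D_\infty$ yields $\deg E_0\leq 2dm+h_B(f)\geq dc+h_B(f)$, not $dm+h_B(f)$ as you assert; so the bound $n\leq\Omega(m,f,0)$ is not obtained, and the exponent $\Omega\left(\left\lceil\frac{c}{2}\right\rceil,f,0\right)$ is out of reach by your route. Second, the space $L_B(m(P_0+P_\infty))$ has dimension $2m+1$, and the ``one-parameter redundancy'' in the shift $t^a$ can lower the relevant dimension by at most one, from $2m$ to $2m-1$, never to $m$; moreover the locus of pairs $(p/t^a,q/t^a)$ is a union of linear spaces indexed by $a$ rather than a single linear space, so it does not feed directly into the B\'ezout argument bounding $\deg W'$. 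Both failures are cured simultaneously by the square-denominator observation above, which is the actual content of the paper's proof.
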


\begin{proof}
Let $(x_0,y_0)\in \mathcal{C}(k(t))_{\leq c}$. Since $k[t]$ is a unique factorization domain and $f$ is monic, with coefficients in $k[t]$, one can deduce from the relation $y_0^2=f(x_0)$ that
$$
x_0=\frac{u_0}{e^2}
$$
where $u_0$ and $e$ are coprime polynomials, unique up to multiplication by a scalar. More precisely, if $v$ is a valuation of $k[t]$ such that $v(x_0)<0$, then $v(y_0^2)=v(f(x_0))=v(x_0^d)=dv(x_0)$ (since $x_0^d$ is the leading term in $f(x_0)$), hence $v(x_0)$ is even (since $d$ is odd).

It follows that $\max\{\deg(u_0),2\deg(e)\} = \deg(x_0) \leq c$. On the curve $C_f$,
\begin{align*}
\divisor(e^2x_0-e^2x) &\geq -c\pi^*(\infty) -(x)_\infty \\
                      &\geq -2\left\lceil\frac{c}{2}\right\rceil\pi^*(\infty) -(x)_\infty,
\end{align*}
improving the inequality \eqref{eq:v0squared} (choosing $P_0=\infty$ and $D_0=\pi^*(\infty)$). This improvement allows us to replace $c+g$ by $\lceil\frac{c}{2}\rceil$ everywhere in the proof of Theorem~\ref{thm:nbratpoint}.
\end{proof}

Working again over the projective line, a classical problem is to count integral points, that is, points with coordinates in $k[t]$. The following statement gives a slightly better bound for the number of such points, provided $c$ is large enough. The main improvement is that we replace the variety $W'$ in the proof of Theorem~\ref{thm:nbratpoint} by a suitable linear subvariety of the target space.

\begin{proposition}
\label{prop:P1nbintpoint}
Under the assumptions of Proposition~\ref{prop:P1nbratpoint}, if $c\geq h_B(f)$ then the number of points in the set $\mathcal{C}(k[t])_{\leq c}$ mapping to the same class under $\delta$ is bounded above by
$$
\begin{cases}
2^{\max\left\{d\left\lceil\frac{c}{2}\right\rceil -g_f, \frac{d}{2}\left\lceil\frac{c}{2}\right\rceil\right\}+1} & \text{if $C_f$ is irreducible} \\
2^{d\left\lceil\frac{c}{2}\right\rceil + \omega(f)}  & \text{otherwise.}
\end{cases}
$$
\end{proposition}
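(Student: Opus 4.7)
The plan is to revisit the proof of Proposition~\ref{prop:P1nbratpoint}, exploiting two simplifications that occur for integral points $(x_0,y_0)\in\mathcal{C}(k[t])$ with $\deg x_0\leq c$. First, since $x_0\in k[t]$ requires no denominator, one takes $v_0=1$ and works directly with $\psi_0:=x_0-x$. Second, the hypothesis $c\geq h_B(f)$ tightens the pole bound for $(x)_\infty$ on $C_f$: at each point $P$ above $\infty$ with ramification index $e_P$, the Newton polygon of the monic equation $x^d+a_{d-1}x^{d-1}+\cdots+a_0=0$ produces some $0\leq i<d$ with $-\ord_P x\leq e_P\deg(a_i)/(d-i)$. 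Since $\deg a_i\leq h_B(f)$ and $d-i\geq 1$, this gives $-\ord_P x\leq e_Ph_B(f)\leq 2\lceil c/2\rceil e_P$. Consequently $(x)_\infty\leq 2\lceil c/2\rceil D_0$ with $D_0=\pi^*(\infty)$, and therefore $\psi_0\in L(2\lceil c/2\rceil D_0)$: the $2D_\infty$ contribution present in Theorem~\ref{thm:nbratpoint} and Proposition~\ref{prop:P1nbratpoint} is eliminated.

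Running the decomposition of the proof of Proposition~\ref{prop:P1nbratpoint} in this smaller ambient space, I write
$$\divisor(\psi_0)+2\lceil c/2\rceil D_0=2E_0+\sum_{i=1}^s P_i$$
with $E_0$ effective and the $P_i$ lying in $\mathrm{supp}(D_2)$, so that $\deg E_0\leq \lceil c/2\rceil\deg D_0=d\lceil c/2\rceil$. The variety $W\subset\mathbb{P}(L(2\lceil c/2\rceil D_0))$ still has degree at most $2^n$ with $n=h^0(E_0)-1$. The decisive improvement is that $W'$ becomes \emph{linear}: with $v_1=1$ the parametrizing map collapses to $u_1\mapsto[u_1-x]$ for $u_1\in L_B(cP_0)$, which is affine-linear in $u_1$, so $W'$ is the projectivization of the linear subspace $L_B(cP_0)+kx$ and has degree $1$.

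By B\'ezout, the number of irreducible components of $W\cap W'$ is at most $2^n\cdot 1=2^n$; multiplying by $2$ to account for the two signs of $y$, at most $2^{n+1}$ points of $\mathcal{C}(k[t])_{\leq c}$ share a given $\delta$-image. When $C_f$ is irreducible, Riemann-Roch together with Clifford's theorem give $n\leq\max\{d\lceil c/2\rceil-g_f,\,d\lceil c/2\rceil/2\}$; otherwise the elementary inequality $h^0(E_0)\leq \deg E_0+\omega(f)$ yields $n+1\leq d\lceil c/2\rceil+\omega(f)$. These are exactly the two stated bounds. Points of the form $(x_0,0)$ can only occur in the non-irreducible case (they require a linear factor of $f$), and they are absorbed by the modified-$2$-descent trick from the end of the proof of Theorem~\ref{thm:nbratpoint}, which halves the bound at the cost of replacing $\omega(f)$ by $\omega(f)-1$.

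The main obstacle is the pole estimate $(x)_\infty\leq 2\lceil c/2\rceil D_0$: this is exactly where the hypothesis $c\geq h_B(f)$ is used, and without it a residual $D_\infty$ term would inflate $\deg E_0$ by up to $h_B(f)/2$ and spoil the $d\lceil c/2\rceil$ estimate. Everything else is a direct transcription of the proofs of Theorem~\ref{thm:nbratpoint} and Proposition~\ref{prop:P1nbratpoint} in the simplified setting where $v_0=v_1=1$.
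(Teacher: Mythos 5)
Your proposal is correct and follows essentially the same route as the paper: drop the $D_\infty$ term using $c\geq h_B(f)$ (the paper gets the pole bound from $\deg_{C_f}(x)=h_B(f)$ rather than via the Newton polygon, but this is the same estimate), replace $W'$ by the linear variety $\mathbb{P}(\langle x,k[t]_{\leq c}\rangle)$ so that B\'ezout gives at most $\deg W\leq 2^n$ components, and then bound $n=h^0(E_0)-1$ by Riemann--Roch/Clifford in the irreducible case and by $\deg E_0+\omega(f)-1$ otherwise, with the Weierstrass points $(x_0,0)$ absorbed by the modified-descent trick exactly as in Theorem~\ref{thm:nbratpoint}.
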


\begin{proof}
Let us go through the proof of Theorem~\ref{thm:nbratpoint}, with the same notation. According to Proposition~\ref{prop:P1nbratpoint}, one can replace $c+g$ by $\lceil\frac{c}{2}\rceil$ when counting rational points.

Since $f$ is monic, with coefficients in $k[t]$, the function $x$ has all its poles in the support of $\pi^*(\infty)$. Therefore, if $x_0 \in k[t]_{\leq c}$ is a polynomial of degree $\leq c$, and if $c\geq h_B(f)=\deg(x)$, then we have, on the curve $C_f$,
$$
\divisor(x_0-x) \geq -c\pi^*(\infty).
$$
It follows that, when counting integral points, one can remove the quantity $h_B(f)$ from the upper bound on the integer $n$.

Next, we observe that, in order to count functions of the form $x_1-x$ with $x_1\in k[t]_{\leq c}$, the variety $W'$ can be replaced by the linear variety $\mathbb{P}(\langle x, k[t]_{\leq c}\rangle)$, and its subvariety $W_0'$ can be replaced by the affine subvariety $\mathbb{P}(\langle x, k[t]_{\leq c}\rangle)_0$ corresponding to functions of the form $\lambda x +\sum_{i\leq c} \mu_it^i$ with $\lambda\neq 0$.
Up to rescaling $\lambda$, such a function can be uniquely represented by a function of the form $x_1-x$ with $x_1\in k[t]_{\leq c}$. Therefore, the $k$-points of $W\cap \mathbb{P}(\langle x, k[t]_{\leq c}\rangle)_0$ are in bijection with functions of the form $x_1-x$ with $x_1\in k[t]_{\leq c}$ which define the same class as $x_0-x$ in $k(C_f)^\times/(k(C_f)^\times)^2$. It then follows from Lemma~\ref{x1lem} that $W\cap \mathbb{P}(\langle x, k[t]_{\leq c}\rangle)_0$ is a finite variety; according to B{\'e}zout's theorem, its degree is bounded above by $\deg W\leq 2^n$. It follows that the number of points in $\mathcal{C}(k[t])_{\leq c}$  having the same image by $\delta$ is bounded above by $2^{n+1}$.
\end{proof}


\subsection{Proof of Theorem~\ref{thm:intro1}}
\label{sec:intro1proof}

\begin{proof}[Proof of Theorem~\ref{thm:intro1} (1)]
Since $d$ is odd, $\mathcal{C}$ has a unique rational point at infinity, which induces an embedding $\mathcal{C}\hookrightarrow J$ of $\mathcal{C}$ into its Jacobian. It is well-known \cite{schaefer95} that the map $\delta$ is the composition of this particular embedding with the (cohomological) $2$-descent map on $J$. Therefore, the image of $\delta$ is a subset of the image of the Mordell-Weil group of $J$.

On the other hand, the canonical map $\Tr_{k(B)/k}(J)\to J$ is injective on $k(B)$-points \cite[Theorem~6.12]{conrad06}, so we have by construction of $\LN(J)$ an exact sequence of abelian groups
$$
0 \longrightarrow \Tr_{k(B)/k}(J)(k) \longrightarrow  J(k(B)) \longrightarrow  \LN(J) \to 0.
$$
Since $k$ is algebraically closed, the group $\Tr_{k(B)/k}(J)(k)$ is $2$-divisible, hence it follows from the Snake Lemma that
\begin{equation}
\label{eq:2torsLN}
J(k(B))[2]/\Tr_{k(B)/k}(J)(k)[2] \simeq \LN(J)[2],
\end{equation}
and that
$$
J(k(B))/2J(k(B)) \simeq \LN(J)/2\LN(J).
$$
It follows from the last statement that the size of the image of $\delta$ is bounded above by
$$
2^{\rk_\Z \LN(J) + \dim_{\F_2} \LN(J)[2]}.
$$
Combining this with Theorem~\ref{thm:nbratpoint} yields the bound.
\end{proof}

\begin{proof}[Proof of Theorem~\ref{thm:intro1} (2)]
Same proof as above, just replace Theorem~\ref{thm:nbratpoint} by its improved version over the projective line: Proposition~\ref{prop:P1nbratpoint}.
\end{proof}

\begin{proof}[Proof of \eqref{eq:GOSpimp}]
The dimension of $J$ is the genus of the curve $\mathcal{C}$ which is equal to $(d-1)/2$, so letting $d_0 :=\dim \Tr_{k(B)/k}(J)$ we have \cite[Th\'eor\`eme 3]{raynaud95}
\begin{equation}
\label{eq:GOSraw}
\rk_\Z \LN(J) \leq 4d_0 + (d-1)(2g-2)+ \deg(\mathfrak{f}_J),
\end{equation}
where $\mathfrak{f}_J$ denotes the conductor of $J$.

Since $k$ is algebraically closed of characteristic not $2$, the $2$-torsion subgroup of $\Tr_{k(B)/k}(J)(k)$ is an $\F_2$-vector space of dimension $2d_0$, hence it follows from \eqref{eq:2torsLN} and \eqref{eq:GOSraw} that
$$
\rk_\Z \LN(J) + \dim_{\F_2} \LN(J)[2] \leq 2d_0 + (d-1)(2g-2)+ \deg(\mathfrak{f}_J) + \dim_{\F_2} J(k(B))[2].
$$

Finally, we replace the size of the $2$-torsion subgroup by its value
$$
\dim_{\F_2} J(k(B))[2] = \omega(f)-1,
$$
where $\omega(f)$ is the number of irreducible factors of $f$. This concludes the proof.
\end{proof}


\section{Counting integral points}
\label{section3}


\subsection{Upper bound on the height of $S$-integral points}
\label{sec:intpointsheightbound}

In this section, we fix a finite non-empty set $S\subset B$, that we also view as a reduced divisor on $B$. We denote by $R_S\subset k(B)$ the ring of rational functions on $B$ with no poles outside $S$; we call it the ring of $S$-integers in $k(B)$.

In order to count the number of $S$-integral points on the Weierstrass model of $\mathcal{C}$, it suffices to give an upper bound on the height of such points, and then apply Theorem~\ref{thm:intro1}.

Inspired by the proof given by Hindry and Silverman \cite[Proposition~8.2]{HS} and its version in positive characteristic by Pacheco \cite{pacheco98}, we obtain the following.

\begin{theorem}
\label{thm:naiveheightbound}
Let $\mathcal{C}$ be the hyperelliptic curve over $k(B)$ defined by the equation $y^2=f(x)$, where $f \in k(B)[x]$ is a monic separable polynomial of odd degree $d\geq 3$.
Assume that $f$ has coefficients in $R_S$; let $\Delta_f$ be the discriminant of $f$, and let $\Sigma :=\{v\in B \mid v(\Delta_f) > 0\}$.
\begin{enumerate}
\item Assume $\car(k)=0$ and $\mathcal{C}$ is non-constant. Then we have, for all $(x_0,y_0)\in \mathcal{C}(R_S)$,
$$
\deg{x_0} \leq 4(2g-2+|S\cup \Sigma|) + \frac{3h_B(f)}{d}
$$
\item Assume $\car(k)>d$ and $\mathcal{C}$ is not isotrivial. Then
$$
\deg{x_0} \leq 6\rho(2g-2+|S\cup \Sigma|) + \frac{3h_B(f)}{d},
$$
where the inseparable degree $\rho$ is defined below (Definition~\ref{def:insepdeg}).
\end{enumerate}
\end{theorem}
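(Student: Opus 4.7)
The strategy, following Hindry--Silverman~\cite{HS} and Pacheco~\cite{pacheco98}, is to apply the function-field $abc$-theorem (Mason--Stothers) over a splitting field of $f$. Let $B'\to B$ be the normalization of $B$ in the splitting field of $f$ over $k(B)$: since the Galois group embeds in $S_d$, the hypothesis $\car(k)=0$ or $\car(k)>d$ guarantees that this group has order coprime to $\car(k)$, so the cover is tamely ramified; and because $f$ has coefficients in $R_S$ with discriminant divisor supported on $\Sigma$, this cover is in fact unramified outside $T:=S\cup\Sigma$. Tame Riemann--Hurwitz then yields the key identity $2g(B')-2+|T'|=[B':B]\cdot(2g-2+|T|)$, where $T'$ is the preimage of $T$ in $B'$; this is what will let me translate a bound on $B'$ back to a bound on $B$.

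Over $B'$ I factor $f(X)=\prod_{i=1}^d(X-e_i)$, so that $y_0^2=\prod_i(x_0-e_i)$ for any $(x_0,y_0)\in\mathcal{C}(R_S)$. The essential structural input is that at any place $v'\notin T'$, the residues $\bar e_i$ are pairwise distinct and all the $e_i$ (as well as $x_0$) are integral there, so the factors $(x_0-e_i)$ are pairwise coprime at $v'$; their product being a square then forces each $v'(x_0-e_i)$ to be even, so every zero of $x_0-e_i$ outside $T'$ has multiplicity at least~$2$. Next I pick three indices $i_1,i_2,i_3$ minimizing $\sum_k\deg_{B'}(e_{i_k})$, which by an averaging argument applied to $\sum_i\deg_{B'}(e_i)=[B':B]h_B(f)$ is at most a fixed multiple of $[B':B]h_B(f)/d$. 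The Lagrange identity
\begin{equation*}
(e_{i_3}-e_{i_2})(x_0-e_{i_1})+(e_{i_1}-e_{i_3})(x_0-e_{i_2})+(e_{i_2}-e_{i_1})(x_0-e_{i_3})=0
\end{equation*}
is then the three-term $abc$-relation I feed into Mason--Stothers on $k(B')$.

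Mason yields $\max_k\deg_{B'}(A_k)\le 2g(B')-2+N$, where $A_k$ is the $k$-th Lagrange term and $N$ counts the distinct places where at least one $A_k$ has nonzero valuation. Outside $T'$, only zeros of the $(x_0-e_{i_k})$ contribute, and the multiplicity-$\ge 2$ property bounds the count by $\tfrac12$ of the relevant zero-degrees; summing disjointly over the three indices and invoking $y_0^2=\prod(x_0-e_i)$ bounds the latter by $\deg_{B'}(y_0)$. Combining this with the lower bound $\max_k\deg_{B'}(A_k)\ge\deg_{B'}(x_0)-\sum_k\deg_{B'}(e_{i_k})$ and a direct control of $\deg_{B'}(y_0)$ via the equation $y_0^2=f(x_0)$ and the integrality of $y_0$, one extracts a linear inequality in $\deg_{B'}(x_0)$. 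Dividing by $[B':B]$ and invoking the Riemann--Hurwitz identity gives $\deg_B(x_0)\le 4(2g-2+|T|)+3h_B(f)/d$ in characteristic zero.

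The main obstacle lies in the precise numerical bookkeeping required to arrive at the specific constants $4$ and $3/d$: each emerges from a careful balancing of the Mason upper bound, its lower bound in terms of $\deg x_0$, the squareness factor of $\tfrac12$, and the loss from choosing three roots of small but nonzero degree. In positive characteristic $>d$, the argument goes through with the same structure, but the splitting-field extension may introduce inseparable contributions at the residue fields of ramified places, quantified by the inseparable degree $\rho$ defined in the theorem; this is what inflates the constant $4$ to $6\rho$ when $\car(k)>d$.
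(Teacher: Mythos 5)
Your overall framework (pass to a cover unramified outside $T=S\cup\Sigma$, use tame Riemann--Hurwitz to transport the bound back to $B$, apply the function-field \emph{abc}-theorem to a three-term relation, and extract $\deg e_{i_k}\le h_{B'}(f)/d$ by ordering the roots) matches the paper. But there is a fatal gap in the core step: you apply Mason--Stothers to the Lagrange identity among the functions $x_0-e_{i_k}$ themselves, and these are \emph{not} $T'$-units --- they have zeros outside $T'$. Your own accounting shows the radical term $N$ is at least $|T'|$ plus (after the halving from even multiplicities) roughly $\tfrac12\sum_k\deg_{B'}(x_0-e_{i_k})\approx\tfrac32\deg_{B'}(x_0)$ (or $\deg_{B'}(y_0)\approx\tfrac d2\deg_{B'}(x_0)$ if you sum over all roots). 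Since $\max_k\deg_{B'}(A_k)\approx\deg_{B'}(x_0)$, Mason's inequality then reads $\deg_{B'}(x_0)\lesssim 2g'-2+|T'|+\tfrac32\deg_{B'}(x_0)$, which gives no upper bound on $\deg_{B'}(x_0)$ at all: the radical grows faster than the quantity you are trying to bound. No amount of bookkeeping will produce the constants $4$ and $3/d$ from this inequality.

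The missing idea is to adjoin the square roots $u_i:=\sqrt{x_0-e_i}$ and work in $L:=k(B)(e_i,u_i)$ (still unramified outside $T$, by the same local argument that shows the $2$-descent map is well defined, and still tame when $\car(k)=0$ or $\car(k)>d$). The point is that $(u_i-u_j)(u_i+u_j)=e_j-e_i$ is a $T$-unit while each factor is a $T$-integer, so every $u_i\pm u_j$ is a genuine $T$-unit; the identity $(u_1\pm u_3)\pm(u_2\pm u_3)=u_1\pm u_2$ is then a bona fide unit equation whose radical is just $|T'|$, and \emph{abc} gives $\deg\bigl(\tfrac{u_1\pm u_3}{u_1\pm u_2}\bigr)\le 2g'-2+|T'|$. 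Writing $2u_1=(u_1+u_3)+(u_1-u_3)$ and then $\tfrac{x_0-e_1}{e_2-e_1}=\tfrac{u_1^2}{u_1^2-u_2^2}$ yields the factor $4$, and the two smallest-degree roots give the $3h_B(f)/d$. Your explanation of $\rho$ in positive characteristic is also off: the splitting field is separable (as $f$ is), and $\rho$ does not measure inseparability of that extension but rather the inseparable degree of the cross-ratios $(e_k-e_1)/(e_2-e_1)$ (equivalently, how far $\mathcal{C}$ is from being isotrivial); it enters because \emph{abc} in characteristic $p$ only controls the separable degree of the units $\tfrac{u_1\pm u_3}{u_1\pm u_2}$, and one must rule out these units being $p$-th powers via the relation $\tfrac{e_3-e_1}{e_2-e_1}=\tfrac{(u_1-u_3)(u_1+u_3)}{(u_1-u_2)(u_1+u_2)}$, which is where the constant degrades from $4$ to $6\rho$.
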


Before we define $\rho$, let us recall basic facts about inseparable degrees. If $p=\car(k)>0$, then given $z\in k(B)\setminus k$ its inseparable degree is the largest power $p^s$ of $p$ such that $z$ belongs to $k(B)^{p^s}$; we denote it by $\degi(z)$. The separable degree is then defined by the formula
$$
\deg(z) = \degs(z)\degi(z).
$$
Alternatively, the separable (resp.\ inseparable) degree of $z$ is the separable (resp.\ inseparable) degree of the field extension $k(B)/k(z)$. When $z$ is a constant function, we do not define $\degs(z)$ and $\degi(z)$. We note that $\degi(z)$ does not change if one computes it over a separable extension $K/k(B)$, since
$$
\degi(K/k(z))= \degi(K/k(B))\cdot \degi(k(B)/k(z)) = \degi(k(B)/k(z)).
$$
The main property of $\degi$ we shall use in the proof is the following elementary one: if $z_1z_2$ is non-constant, then
\begin{equation}
\label{eq:insepdegprod}
\degi(z_1z_2) \geq \min \{\degi(z_1), \degi(z_2)\}.
\end{equation}

\begin{definition}
\label{def:insepdeg}
Assume $\car(k)>d$ and $\mathcal{C}$ is not isotrivial.
Given a finite separable extension $K/k(B)$ over which $f$ has all its roots $e_1,\dots, e_d$, we let
$$
\rho :=\inf\left\{\degi_K\left(\frac{e_k - e_1}{e_2 - e_1}\right) ~|~ k=3,\dots,d ~\text{and}~ \frac{e_k - e_1}{e_2 - e_1} \notin k \right\}.
$$
\end{definition}

Note that $\rho$ is well-defined, and strictly positive: if the set above were empty, then over $K$ the curve $y^2=(e_2-e_1)f(x)$ would be defined by an equation with coefficients in $k$, \emph{i.e.} $\mathcal{C}$ would be constant over a quadratic extension of $K$, hence isotrivial over $k(B)$. Moreover, as long as $K/k(B)$ is separable, this quantity does not depend on the choice of $K$.
Alternatively, one can define $\rho$ by
$$
\rho := \degi\left(K/k\left(\frac{e_k - e_1}{e_2 - e_1}; k=3,\dots,d\right)\right)
$$
from which one can check that $\rho$ does not depend on the choice of an ordering of $e_1,\dots,e_d$.

\begin{remark}
In the case when $\mathcal{C}$ is an elliptic curve, we have
$$
\rho = \degi\left(\frac{e_3 - e_1}{e_2 - e_1}\right) = \degi j(\mathcal{C})
$$
where $j(\mathcal{C})$ is the modular invariant. More generally, $\rho$ is the largest power of $p$ such that $\mathcal{C}$ is defined over $k(B)^\rho$.
\end{remark}

\begin{proof}
Let us assume first that $\car(k)=0$.
Let $P=(x_0,y_0)$ be an $S$-integral point. We shall work with the set $T:=S\cup \Sigma$, which is the smallest set containing $S$ and such that $\Delta_f$ is a $T$-unit. Let $e_1,\dots, e_d$ be the roots of $f$ in some algebraic closure of $k(B)$, ordered by increasing degree, let $u_i:=\sqrt{x_0-e_i}$, and let
$$
L:= k(B)(e_i,u_i,i=1,\dots,d).
$$
Since $\Delta_f$ is a $T$-unit, the extension $k(B)(e_1,\dots,e_d)/k(B)$, which is the splitting field of $f$, is unramified outside $T$. Moreover, the extension $L/k(B)(e_1,\dots,e_d)$ is unramified outside $T$, exactly by the same argument which proves that the descent map is well-defined (see Lemma~\ref{lem:descentmap}). Therefore, the extension $L/k(B)$ is unramified outside $T$.

Let $B'\to B$ be the finite cover of curves corresponding to the extension $L/k(B)$, and let $T'$ be the set of places of $B'$ lying over $T$. Since $B'\to B$ is unramified outside $T$, and tamely ramified above $T$, the Riemann-Hurwitz formula yields
\begin{equation}
\label{eq:RiemannHurwitz}
2g'-2+|T'| = [L:k(B)](2g-2+|T|)
\end{equation}
where $g'$ denotes the genus of $B'$.
It follows that, in the formula we want to prove, all quantities are multiplied by $[L:k(B)]$ when computed over $L$. So we may, and do, assume that $L=k(B)$.

We note that the $e_i$ are $T$-integers, because they are roots of a monic polynomial whose coefficients are $T$-integers. Likewise, since $x_0$ and the $e_i$ are $T$-integers, so are the $u_i$. Finally, since $\Delta_f$ is a $T$-unit, the $e_j-e_i$ are also $T$-units, for all $i\neq j$.

For appropriate choices of signs, we have the following relation between $T$-units:
\begin{equation}
\label{eq:relationu_i}
(u_1 \pm u_3) \pm (u_2 \pm u_3) = u_1 \pm u_2.
\end{equation}
Hence the \emph{abc}-theorem over function fields \cite{silverman84} implies that, for all choices of signs,
$$
\deg\left(\frac{u_1 \pm u_3}{u_1 \pm u_2}\right) \leq 2g-2+|T|.
$$
It follows that
$$
\deg\left(\frac{u_1}{u_1 \pm u_2}\right)=\deg\left(\frac{2u_1}{u_1 \pm u_2}\right) \leq \deg\left(\frac{u_1 + u_3}{u_1 \pm u_2}\right) + \deg\left(\frac{u_1 - u_3}{u_1 \pm u_2}\right) \leq 2(2g-2+|T|)
$$
(in passing, we used the fact that $2\neq 0$ in $k$). Therefore,
\begin{align*}
\deg\left(\frac{x_0-e_1}{e_2-e_1}\right) = \deg\left(\frac{u_1^2}{u_1^2-u_2^2}\right) &\leq \deg\left(\frac{u_1}{u_1 + u_2}\right) + \deg\left(\frac{u_1}{u_1 - u_2}\right) \\
                                &\leq 4(2g-2+|T|).
\end{align*}
On the other hand, by the properties of the degree we have
\begin{align*}
\deg(x_0)&=\deg\left(\frac{x_0-e_1}{e_2-e_1}(e_2-e_1)+e_1\right) \leq \deg\left(\frac{x_0-e_1}{e_2-e_1}\right) + 2\deg(e_1)+ \deg(e_2) \\
&\leq \deg\left(\frac{x_0-e_1}{e_2-e_1}\right) + \frac{3h_B(f)}{d},
\end{align*}
where the last inequality follows from the elementary observations (recalling the chosen ordering on the $e_i$):
$$
\deg(e_1)\leq \frac{h_B(f)}{d} \qquad \deg(e_1)+ \deg(e_2) \leq \frac{2h_B(f)}{d},
$$
hence the result.

Let us now consider the case when $\car(k)>d$. The first step of the proof (base-changing to $L$) is the same, observing that the extension $L/k(B)$ is separable (because $f$ is), and tamely ramified (since $\car(k)>d$), hence the Riemann-Hurwitz formula \eqref{eq:RiemannHurwitz} holds without change. Note that as previously all quantities in the formula are multiplied by the degree $[L:k(B)]$ and that $\rho$ remains unchanged.

We let as previously $e_1$ and $e_2$ be the roots of $f$ with smallest degree, then we choose a third root $e_3$ in such a way that
$$
\rho = \degi\left(\frac{e_3 - e_1}{e_2 - e_1}\right),
$$
where $\rho$ is the inseparable degree (Definition~\ref{def:insepdeg}).

Then the relation between $T$-units
$$
\frac{e_3 - e_1}{e_2 - e_1} + \frac{e_2 - e_3}{e_2 - e_1} =1
$$
implies, via the \emph{abc}-theorem, that
$$
\deg\left(\frac{e_3 - e_1}{e_2 - e_1}\right) = \degi\left(\frac{e_3 - e_1}{e_2 - e_1}\right)\degs\left(\frac{e_3 - e_1}{e_2 - e_1}\right) \leq \rho(2g-2+|T|).
$$

On the other hand, we have
\begin{equation}
\label{eq:unitvariation}
\frac{e_3 - e_1}{e_2 - e_1} = \left(\frac{u_1 - u_3}{u_1 - u_2}\right)\left(\frac{u_1 + u_3}{u_1 + u_2}\right).
\end{equation}
This quantity being non-constant, it follows from \eqref{eq:insepdegprod} that $\rho$ satisfies
$$
\min\left\{\degi\left(\frac{u_1 - u_3}{u_1 - u_2}\right),\degi\left(\frac{u_1 + u_3}{u_1 + u_2}\right)\right\} \leq \rho.
$$
Now, applying the \emph{abc}-theorem to the relation \eqref{eq:relationu_i} we have
$$
\degs\left(\frac{u_1 \pm u_3}{u_1 \pm u_2}\right) \leq 2g-2+|T|.
$$
Combining the two previous inequalities yields
$$
\min\left\{\deg\left(\frac{u_1 - u_3}{u_1 - u_2}\right),\deg\left(\frac{u_1 + u_3}{u_1 + u_2}\right)\right\} \leq \rho(2g-2+|T|).
$$
Assume that the first quantity is the minimum. Then, according to \eqref{eq:unitvariation} we have
$$
\deg\left(\frac{u_1 + u_3}{u_1 + u_2}\right) \leq \deg\left(\frac{e_3 - e_1}{e_2 - e_1}\right) + \deg\left(\frac{u_1 - u_3}{u_1 - u_2}\right) \leq 2\rho(2g-2+|T|).
$$

Finally, the same reasoning holds when switching signs between numerators in the right-hand side of \eqref{eq:unitvariation}. By the same method as in the characteristic $0$ case, we deduce that
$$
\deg\left(\frac{x_0-e_1}{e_2-e_1}\right) \leq 6\rho(2g-2+|T|),
$$
and the result follows by the same argument as in the characteristic zero case.
\end{proof}


\subsection{Proof of Theorem~\ref{thm:intro2}}
\label{sec:intro2proof}

\begin{proof}[Proof of Theorem~\ref{thm:intro2}]
With the notation of Theorem~\ref{thm:naiveheightbound} we have $|S\cup \Sigma|\leq |S|+\deg \Delta_f$, hence it follows from Theorem~\ref{thm:naiveheightbound} that the quantity $c_{\max}$ is an upper bound on the naive height of $S$-integral points on $\mathcal{C}$. It suffices to apply Theorem~\ref{thm:intro1} in order to deduce an upper bound on the number of $S$-integral points.

In order to prove Theorem~\ref{thm:intro2}, it remains to check that, in both brackets, the maximum is achieved by the first quantity, namely
\begin{equation}
\label{eq:firstmax}
\max\left\{d(c_{\max}+g)+h_B(f)-g_f, \frac{1}{2}\left(d(c_{\max}+g)+h_B(f)\right)\right\}
\end{equation}
when $C_f$ is irreducible, and
\begin{equation}
\label{eq:secondmax}
\max\left\{c_{\max}, \frac{1}{2}(c_{\max}+g)\right\}.
\end{equation}

By definition, $c_{\max}$ satisfies
\begin{equation}
\label{eq:cmaxLB}
c_{\max} \geq 4(2g - 2 + |T|) + \frac{3h_B(f)}{d},
\end{equation}
where $T := S\cup\Sigma$ as previously. If $g=0$ then $|T| \geq 2$ (since a non-constant fibration of the projective line has at least $2$ bad fibers), and in any case $|S|\geq 1$, hence we deduce from \eqref{eq:cmaxLB} that $c_{\max} \geq 4g$ in all cases. This proves the first quantity in \eqref{eq:secondmax} is the maximum.

In order to prove that the first quantity in \eqref{eq:firstmax} is the maximum, it suffices to prove that
$$
d(c_{\max}+g)+h_B(f) \geq 2g_f.
$$
But we have, according to the Riemann-Hurwitz formula,
$$
2g_f-2+|T_f| = d(2g-2+|T|),
$$
where $T_f$ denotes the set of points of $C_f$ lying above $T$. So, in order to prove the result, it suffices to prove that
$$
c_{\max}+g+\frac{h_B(f)}{d} \geq 2g-2+|T| + \frac{2}{d}.
$$
But this follows from \eqref{eq:cmaxLB}, observing that $h_B(f)\geq 1$ (if $h_B(f)=0$ then $f$ would have constant coefficients, which contradicts the assumption that $\mathcal{C}$ is non-constant).
\end{proof}


\subsection{Integral points of small height on elliptic curves over $\PP^1$}

In this section we work over $k(B)=k(\mathbb{P}^1)=k(t)$, although the arguments may admit extensions to the general case. Let $\mathcal{C}=\mathcal{E}$ be an elliptic curve defined over $k(t)=k(\PP^1)$ by an affine equation $y^2=f(x)$, where $\deg f=3$ and $f$ has coefficients in $k[t]$. In addition to the running assumption that $f$ is a monic separable cubic polynomial, we assume additionally that $f$ is irreducible (or equivalently, $C_f$ is irreducible). The key idea behind the results in this section is that when the divisor $E_0$ in the proof of Theorem~\ref{thm:nbratpoint} is a special divisor, then instead of applying Clifford's theorem we may use a more refined analysis based on classical results of Maroni on the Brill-Noether theory of trigonal curves. The resulting improvements will be important in the applications in the next section.

We begin by collecting some classical facts about trigonal curves. Let $C$ be a trigonal curve of genus $g>4$. This implies that the $g_3^1$ is unique, and we let $\mathfrak{g}_3^1$ be its image in $\Pic^3(C)$. We next recall the Maroni invariant $m$ of $C$, which we can take to be defined by \cite[Eq. (1.2)]{MS}
\begin{align*}
m=\min\{n\in\mathbb{N}\mid h^0(n\mathfrak{g}_3^1)>n+1\}-2.
\end{align*}
It is known \cite[Eq. (1.1)]{MS} that 
\begin{align*}
0<\frac{g-4}{3}\leq m\leq \frac{g-2}{2}.
\end{align*}
Let $W^r_n=W^r_n(C)=\{[D]\in \Pic^n(C)\mid \deg D=n, h^0(D)\geq r+1\}$, and let $W_n=W^0_n$. Let $\kappa=[K]\in \Pic^{2g-2}(C)$ be the canonical class of $C$.
We define
\begin{align*}
U_n^r=\begin{cases}
r\mathfrak{g}_3^1+W_{n-3r} &\text{if }n\geq 3r\\
\emptyset &\text{otherwise},
\end{cases}
\end{align*}
and
\begin{align*}
V_n^r&=\begin{cases}
\kappa-((g-n+r-1)\mathfrak{g}_3^1+W_{2(n-1)-g-3(r-1)}) &\text{if }2(n-1)-g-3(r-1)\geq 0\\
\emptyset &\text{otherwise}.
\end{cases}
\end{align*}

Then a classical result of Maroni \cite[Prop. 1]{MS} states:

\begin{theorem}
\label{BNtrigonal}
Let $C$ be a trigonal curve of genus $g>4$. For $n<g$ and $r\geq 1$, we have
\begin{enumerate}
\item  $W_n^r=U_n^r\cup V_n^r$\label{BNtrigonal1}
\item If $U_n^r\neq \emptyset$ then $U_n^r$ is an irreducible component of $W_n^r$.
\item Let $V_n^r\neq \emptyset$. Then $U_n^r\neq\emptyset$ and $V_n^r$ is an irreducible component of $W_n^r$ different from $U_n^r$ if and only if $g-n+r-1\leq m$.\label{BNtrigonal3}
\end{enumerate}
\end{theorem}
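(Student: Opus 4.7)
The plan is to prove this via the geometry of the unique trigonal pencil $\mathfrak{g}_3^1$ (whose uniqueness for $g > 4$ is Castelnuovo–Severi). I would organize everything around the pushforward under the trigonal map $\pi: C \to \mathbb{P}^1$. For any line bundle $L = \mathcal{O}_C(D)$ of degree $n$, Grothendieck's theorem gives a splitting
$$
\pi_* L \;\simeq\; \mathcal{O}_{\mathbb{P}^1}(e_1) \oplus \mathcal{O}_{\mathbb{P}^1}(e_2) \oplus \mathcal{O}_{\mathbb{P}^1}(e_3), \qquad e_1 \leq e_2 \leq e_3,
$$
and Riemann–Roch on both curves forces $e_1 + e_2 + e_3 = n - g - 2$. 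The projection formula yields the workhorse identity $h^0(D - k\mathfrak{g}_3^1) = \sum_{i=1}^3 \max(0, e_i + 1 - k)$ for every integer $k$. The Serre-dual inclusions $U_n^r \subseteq W_n^r$ and $V_n^r \subseteq W_n^r$ are automatic from $h^0(k\mathfrak{g}_3^1) \geq k+1$ combined with Riemann–Roch.

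For part~(1), I would proceed by dichotomy on the value of $e_3$. If $e_3 \geq r$, then the identity above gives $h^0(D - r\mathfrak{g}_3^1) \geq e_3 + 1 - r \geq 1$, so $D - r\mathfrak{g}_3^1$ is linearly equivalent to an effective divisor and $[D] \in U_n^r$. If instead $e_3 < r$, then all of $h^0(D) \geq r+1$ has to be absorbed into the $h^1$ side: applying the same pushforward analysis to the dual side with $K-D$ shows that $K - D - (g - n + r - 1)\mathfrak{g}_3^1$ must be effective, placing $[D] \in V_n^r$. The delicate point is to verify that no intermediate splitting type (e.g.\ $e_3 < r$ but some $e_i$ only barely non-negative) escapes both descriptions; this is where one uses that $g > 4$ and that $\mathfrak{g}_3^1$ is unique, ruling out extra components.

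For parts~(2) and~(3), the sets $U_n^r$ and $V_n^r$ are the set-theoretic images of natural morphisms out of symmetric products, namely $C^{(n-3r)} \to \Pic^n(C)$ sending $E \mapsto r\mathfrak{g}_3^1 + E$ and (via Serre duality) $C^{(2(n-1)-g-3(r-1))} \to \Pic^n(C)$; hence both are irreducible, of dimensions $n - 3r$ and $2n + 1 - g - 3r$ respectively. For (2), irreducibility together with $U_n^r \subseteq W_n^r$ forces $U_n^r$ to be an irreducible component whenever it is non-empty, since it contains an open subset of points with exactly $r+1$ sections. For (3), when both $U_n^r$ and $V_n^r$ are non-empty, they are distinct components precisely when a generic element of $V_n^r$ is not of the form $r\mathfrak{g}_3^1 + (\text{effective})$. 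Translating through Serre duality, this amounts to a dimension comparison controlled by $h^0((g - n + r - 1)\mathfrak{g}_3^1)$: by the very definition of the Maroni invariant, this equals the generic value $g - n + r$ exactly when $g - n + r - 1 \leq m + 1$; a short boundary check then sharpens the threshold to $g - n + r - 1 \leq m$.

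The main obstacle is the scrollar analysis behind the dichotomy in (1): one must systematically exclude splitting types where $e_3 < r$ yet the dual divisor is not cleanly of the form $(g - n + r - 1)\mathfrak{g}_3^1 + (\text{effective})$. Maroni's original approach (and the one I would follow) resolves this by embedding $C$ into the Hirzebruch surface $\mathbb{F}_{b-a}$ of scrollar type $(a,b)$ with $a = m$ and $a + b = g-2$, whose hyperplane sections cut out the multiples of $\mathfrak{g}_3^1$; the bound $m \leq (g-2)/2$ then guarantees enough room for the scrollar case analysis to close.
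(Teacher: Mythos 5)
The paper does not prove this statement: it is quoted verbatim as ``a classical result of Maroni \cite[Prop. 1]{MS}'', so there is no in-paper argument to compare against. Judged on its own terms, your reconstruction via the Grothendieck splitting $\pi_*L\simeq\bigoplus_{i=1}^3\mathcal{O}_{\mathbb{P}^1}(e_i)$ is the standard modern route, and part (1) of your sketch is essentially right --- in fact cleaner than you suggest. Since $n<g$ and $h^0(D)\geq r+1\geq 2$, one cannot have $e_1\geq 0$ (else $h^0(D)=n-g+1\leq 0$), so $e_1\leq -1$; then in the case $e_3\leq r-1$ the inequality $\max(0,e_2+1)+\max(0,e_3+1)\geq r+1$ forces $e_2+e_3\geq r-1$, hence $e_1\leq n-g-r-1$, which by relative duality ($\pi_*(K_C-D)\simeq\bigoplus\mathcal{O}(-e_i-2)$) is exactly the statement $h^0\bigl(K-D-(g-n+r-1)\mathfrak{g}_3^1\bigr)\geq 1$, i.e. $[D]\in V_n^r$. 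There are no ``intermediate splitting types'' to exclude, and uniqueness of the $g^1_3$ is needed only so that $U_n^r,V_n^r$ are well defined.

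There are two real defects. First, your justification of (2) --- that $U_n^r$ contains an open set of points with exactly $r+1$ sections --- shows only $U_n^r\not\subseteq W_n^{r+1}$, which is not what is needed; since (1) gives $W_n^r=U_n^r\cup V_n^r$ with both sets irreducible, being a component is equivalent to $U_n^r\not\subsetneq V_n^r$, and the correct (easy) argument is the dimension count $\dim U_n^r=n-3r\geq \dim V_n^r=2n-g-3r+1$, which is exactly $n\leq g-1$. Second, and more seriously, the entire content of the ``if and only if'' in (3) --- the point where the Maroni invariant actually enters, and the only place the threshold $g-n+r-1\leq m$ rather than $\leq m+1$ matters --- is deferred to ``a short boundary check'' that you never perform, together with an unargued appeal to the embedding of $C$ in a Hirzebruch surface. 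Deciding whether the generic element $K-(g-n+r-1)\mathfrak{g}_3^1-E$ of $V_n^r$ lies in $U_n^r$ requires controlling $h^0$ of twists of multiples of $\mathfrak{g}_3^1$ by generic effective divisors, i.e. a genuine scrollar computation; as written this step is a gap, not a routine verification. Since this is precisely the delicate part of Maroni's proposition, the proposal should either carry it out or, like the paper, simply cite \cite{MS}.
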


We reformulate this result in a form more convenient for our purposes.

\begin{corollary}
\label{h0bound2}
Let $C$ be a trigonal curve of genus $g>4$ and Maroni invariant $m$, and let $D$ be an effective divisor on $C$ with $\deg D<g$. Let $t\in k(C)$ be a rational function yielding the trigonal morphism. Then either $L(D)\subset k(t)$ or $h^0(D)\leq \max\left\{\min\left\{m+\deg D+2-g,\frac{2(\deg D-1)-g}{3}+2\right\},1\right\}$.
\end{corollary}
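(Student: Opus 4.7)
The strategy is to apply Maroni's description (Theorem~\ref{BNtrigonal}) of the Brill--Noether loci on $C$ to the class $[D]$, writing $n := \deg D < g$ and $r := h^0(D) - 1$. The case $r = 0$ is absorbed by the $\max\{\,\cdot\,,1\}$ in the bound, so I would immediately reduce to $r \geq 1$; then $[D] \in W_n^r$ and Theorem~\ref{BNtrigonal}(1) gives the dichotomy $[D] \in U_n^r$ or $[D] \in V_n^r \setminus U_n^r$, which will correspond respectively to the two alternatives of the conclusion.

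Suppose first that $[D] \in U_n^r$, so $D$ is linearly equivalent to some effective divisor of the form $r\mathfrak{g}_3^1 + E$ with $E$ effective of degree $n - 3r$. The inclusion $L(r\mathfrak{g}_3^1) \hookrightarrow L(r\mathfrak{g}_3^1 + E)$ gives $h^0(r\mathfrak{g}_3^1) \leq h^0(D) = r+1$, while the functions $1, t, \ldots, t^r$ lie in $L(r\mathfrak{g}_3^1)$, so $h^0(r\mathfrak{g}_3^1) \geq r+1$ and equality holds throughout. By the very definition of the Maroni invariant $m$, the equality $h^0(r\mathfrak{g}_3^1) = r+1$ forces $r \leq m+1$ and also forces the above inclusion to be an equality $L(r\mathfrak{g}_3^1 + E) = L(r\mathfrak{g}_3^1) = \langle 1, t, \ldots, t^r\rangle \subset k(t)$. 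Taking $r\mathfrak{g}_3^1 + E$ as the effective representative of the class $[D]$ yields the literal inclusion $L(D) \subset k(t)$, which is the first alternative of the conclusion.

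In the complementary case $[D] \in V_n^r \setminus U_n^r$, the locus $V_n^r$ is nonempty and is not contained in $U_n^r$, so it must be an irreducible component of $W_n^r$ distinct from $U_n^r$. Part~(3) of Theorem~\ref{BNtrigonal} then gives $g - n + r - 1 \leq m$, that is, $h^0(D) \leq m + n + 2 - g$. Independently, the definition of $V_n^r$ requires the shift index $2(n-1) - g - 3(r-1)$ to be non-negative for $V_n^r$ to be nonempty, which rearranges to $h^0(D) \leq \frac{2(n-1) - g}{3} + 2$. Taking the minimum delivers the second alternative.

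The most delicate point, in my view, is making the statement $L(D) \subset k(t)$ in Case~A precise: $L(D)$ as a subspace of $k(C)$ literally depends on the chosen divisor $D$ and not merely on its class, so one must pass to the effective representative $r\mathfrak{g}_3^1 + E$ supplied by $[D] \in U_n^r$ before invoking the identification with $\langle 1, t, \ldots, t^r\rangle$. Everything else is a direct unpacking of the definitions of $U_n^r$, $V_n^r$, and of the Maroni invariant, together with the irreducible-component description from Theorem~\ref{BNtrigonal}.
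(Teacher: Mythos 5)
Your Case B and your handling of $h^0(D)=1$ coincide with the paper's argument, and deriving $L(r\mathfrak{g}_3^1+E)=L(r\mathfrak{g}_3^1)=\langle 1,t,\dots,t^r\rangle$ by a pure dimension count is a fine variant of what the paper does. But in Case A there is a genuine gap, and you have correctly located it without actually closing it. What your argument proves is that $L(D')\subset k(t)$ for the particular effective representative $D'=rD_t+E$ of the class $[D]$, where $D_t=\pi^*(\infty)$ is the polar divisor of $t$. The corollary asserts $L(D)\subset k(t)$ for the divisor $D$ you were given, and, as you yourself observe, $L(D)$ depends on $D$ and not only on $[D]$: writing $\divisor(\psi)=D'-D$, one has $L(D)=\psi\cdot L(D')$, and this lies in $k(t)$ only if $\psi\in k(t)$, which you have not established. ``Taking $r\mathfrak{g}_3^1+E$ as the effective representative'' is not a move available to you; it renames the problem rather than solving it. The precise form matters downstream: Theorem~\ref{generalMaronibound} applies the dichotomy to a specific divisor $E_0$ and runs a separate argument exactly in the case $L(E_0)\subset k(t)$, so a conclusion about some linearly equivalent divisor would not suffice there.

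The gap is repairable in two ways. The paper first reduces to the case where $|D|$ is basepoint free (subtracting the base locus changes neither $L(D)$ nor $h^0(D)$ and only decreases $\deg D$); then the $r$-dimensional family $rg_3^1+E$ fills up the $r$-dimensional $|D|$, forcing $E=0$, so that $D$ itself lies in $rg_3^1$, is a sum of $r$ fibers of $t$, and $L(D)\subset k(t)$ follows directly. Alternatively, staying in your setup: since $L(rD_t+E)=\{p(t)\mid p\in k[t],\ \deg p\leq r\}$, every effective divisor in $|rD_t+E|$ has the form $F+E$ with $F\in rg_3^1$ a sum of $r$ fibers; in particular $D=F+E$ for such an $F$, and then $L(F)\subset L(D)$ together with $h^0(F)\geq r+1=h^0(D)$ gives $L(D)=L(F)$, which consists of pullbacks of rational functions on $\PP^1$ and hence lies in $k(t)$. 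Either supplement closes the gap; everything else in your proposal matches the paper's proof.
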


\begin{proof}
If $h^0(D)=1$ then the conclusion of the corollary is trivially satisfied. Suppose now that $h^0(D)=r+1\geq 2$ and let $n=\deg D$. We easily reduce to the case $|D|$ is basepoint free. By Theorem~\ref{BNtrigonal}\eqref{BNtrigonal1}, either $[D]\in U_n^r$ or $[D]\in V_n^r$. In the first case, we will show that $L(D)\subset k(t)$. In the second case, we combine two inequalities: one coming from the condition $V_n^r\neq\emptyset$, and one coming from requiring (as we may) $V_n^r\not\subset U_n^r$.

Suppose first that $[D]\in U_n^r$. Note that consistent with the fact $U_n^r\subset W_n^r$, since $n<g$ and $U_n^r\neq \emptyset$, we have $r\leq \frac{n}{3}\leq \frac{g-1}{3}\leq m+1$, and from our definition of the Maroni invariant, $rg_3^1$ is a complete linear system (and $\dim rg_3^1=r$). Then in this case, since $|D|$ is basepoint free, $[D]=r\mathfrak{g}_3^1$ and $|D|=rg_3^1$. Let $\phi\in L(D)$. Then it follows that we can write $\divisor(\phi)=E-F$, where $E,F\in rg_3^1$. But any such function $\phi$ can be written as a rational function in $t$ (of degree $\leq r$), and in particular, $L(D)\subset k(t)$.

Suppose now that $[D]\in V_n^r$, but $[D]\not\in U_n^r$. Then $V_n^r$ must be different from $U_n^r$, which implies that $g-n+r-1\leq m$ by Theorem~\ref{BNtrigonal}\eqref{BNtrigonal3}. Since $V_n^r\neq\emptyset$, we have $2(n-1)-g-3(r-1)\geq 0$. Combining these two inequalities gives
\begin{align*}
r+1=h^0(D)\leq \min\left\{m+n+2-g,\frac{2(n-1)-g}{3}+2\right\},
\end{align*}
completing the proof.
\end{proof}

Using Riemann-Roch and Serre duality when $\deg D\geq g$, Theorem~\ref{BNtrigonal} gives (see \cite[Remark~4.5(b)]{LN}:

\begin{theorem}
\label{h0trigonalbound}
Let $C$ be a trigonal curve of genus $g>4$ and let $D$ be a divisor. Then
\begin{align*}
h^0(D)\leq 
\begin{cases}
\frac{2}{3}(\deg D+2)-\frac{1}{3}g & \text{ if } g\leq \deg D\leq 2g-2,\\
\deg D+1-g & \text{ if } \deg D>2g-2.
\end{cases}
\end{align*} 
\end{theorem}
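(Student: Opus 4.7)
The plan is to reduce both cases to Serre duality combined with Riemann-Roch, invoking Maroni's Theorem~\ref{BNtrigonal} as the only nontrivial input, and only in the intermediate range. For $\deg D > 2g-2$ the bound is immediate: since $\deg(K - D) < 0$, Serre duality gives $h^1(D) = h^0(K - D) = 0$, and Riemann-Roch then yields $h^0(D) = \deg D + 1 - g$ exactly.

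For the main range $g \leq \deg D \leq 2g - 2$, Riemann-Roch together with Serre duality gives
$$h^0(D) = \deg D + 1 - g + h^0(K - D).$$
Setting $D' := K - D$, one has $n := \deg D' = 2g - 2 - \deg D \in [0, g-2]$, and a routine arithmetic manipulation shows that the claimed inequality $h^0(D) \leq \tfrac{2}{3}(\deg D + 2) - \tfrac{g}{3}$ is equivalent to the uniform bound
$$h^0(D') \leq \frac{n + 3}{3} \quad \text{whenever } 0 \leq n < g.$$

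To establish this last bound we may assume $h^0(D') = r + 1 \geq 2$, so that $[D'] \in W_n^r$. By Theorem~\ref{BNtrigonal}(1), either $[D'] \in U_n^r$ or $[D'] \in V_n^r$, and in each case the component must be non-empty. Non-emptiness of $U_n^r$ forces $r \leq n/3$ directly from the definition $U_n^r = r\mathfrak{g}_3^1 + W_{n - 3r}$. Non-emptiness of $V_n^r$ forces $2(n-1) - g - 3(r-1) \geq 0$, hence $r \leq (2n - g + 1)/3$, and under the hypothesis $n \leq g - 1$ this is dominated by $n/3$. In either case $h^0(D') = r + 1 \leq (n + 3)/3$, which gives the required estimate.

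The main (and only) obstacle is the brief verification that the $V_n^r$-bound is absorbed by the $U_n^r$-bound when $n < g$; everything else is routine bookkeeping with Riemann-Roch and Serre duality. No use of Corollary~\ref{h0bound2} as stated is needed, but its proof isolates exactly the same dichotomy from Theorem~\ref{BNtrigonal} that drives the argument.
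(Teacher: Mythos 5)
Your argument is correct and is exactly the route the paper intends: it states that the bound follows from Riemann--Roch and Serre duality combined with Theorem~\ref{BNtrigonal} (citing \cite[Remark~4.5(b)]{LN}), and your reduction to $h^0(K-D)\leq \frac{n+3}{3}$ with $n=\deg(K-D)<g$, settled via the dichotomy $U_n^r\cup V_n^r$, simply fills in those details. The arithmetic checks out (in particular $(2n+1-g)/3\leq n/3$ precisely when $n\leq g-1$), so there is nothing to add.
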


We now use these results to study integral points of small height on elliptic curves over $k(t)$.

\begin{theorem}
\label{generalMaronibound}
Let $\mathcal{E}$ be a curve over $k(t)=k(\PP^1)$ defined by an affine equation $y^2=f(x)=x^3+A(t)x+B(t)$, $A,B\in k[t]$. Let $C$ be the curve $C:x^3+A(t)x+B(t)=0$ and suppose that $C$ is irreducible and $g_C>4$. Let $m$ be the Maroni invariant of $C$. Let $c$ be an even integer satisfying
\begin{align*}
c\geq \frac{1}{2}\max\{\deg A,\deg B\}=\frac{1}{2}h(f).
\end{align*}
Let
\begin{align*}
\mu=
\begin{cases}
\min\left\{m+\frac{3c}{2}+2-g_C,c+\frac{4-g_C}{3}\right\} &\text{ if }c<\frac{2}{3}g_C\\
c+\frac{4-g_C}{3} &\text{ if }\frac{2}{3}g_C\leq c\leq \frac{4}{3}(g_C-1)\\
\frac{3}{2}c+1-g_C &\text{ if }c>\frac{4}{3}(g_C-1).
\end{cases}
\end{align*}
Then there are at most 
\begin{align*}
2^{\max\{\mu,1\}}
\end{align*}
points in the set $\mathcal{E}(k[t])_{\leq c}$ mapping (under $\delta$) to the same class in $H^1(C_f\setminus \pi^{-1}(\Sigma_2),\mu_2)$.
\end{theorem}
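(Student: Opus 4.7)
The strategy is to adapt the counting argument of Proposition~\ref{prop:P1nbintpoint} by replacing Clifford's theorem with the trigonal Brill--Noether refinements provided by Corollary~\ref{h0bound2} and Theorem~\ref{h0trigonalbound}. The relevant trigonal structure on $C_f$ for Maroni's theory is exactly the natural map $\pi = t \colon C_f \to \mathbb{P}^1_t$, so these tools apply directly.

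The first step is to run the construction of Theorem~\ref{thm:nbratpoint} with $P_0 = \infty$ and $D_0 = \pi^*(\infty)$ in the integral setup, and to show that the hypothesis $c \geq h_B(f)/2$ (with $c$ even) already yields the improved bound $\deg E_0 \leq 3c/2$ on the effective divisor $E_0$ produced by the proof. The key technical input is a Newton polygon analysis of $f = x^3 + A(t)x + B(t)$ at $t = \infty$: the slopes are either $\deg B/3$ (single-edge case, when $\deg A \leq 2\deg B/3$), or the pair $\deg A/2$ and $\deg B - \deg A$ (two-edge case), and a case analysis shows each is bounded by $\max\{\deg A, \deg B\}/2 = h_B(f)/2$. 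Since the slope at a point $P$ above $\infty$ equals $a_P/e_P$ (pole order of $x$ over ramification index), this gives $(x)_\infty \leq c\, \pi^*(\infty)$; combined with $(x_0)_\infty \leq c\,\pi^*(\infty)$ for integral $x_0 \in k[t]_{\leq c}$, we get $\divisor(x_0 - x) \geq -c\,\pi^*(\infty)$, and for $c$ even, the identity $\divisor(\psi_0) = 2E_0 + \sum P_i - c\,\pi^*(\infty)$ yields $\deg E_0 \leq 3c/2$.

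In the second step, we bound $h^0(E_0)$ on the trigonal curve $C_f$ (of genus $g_C > 4$ and Maroni invariant $m$), in three regimes corresponding exactly to the three cases of $\mu$. When $c < 2g_C/3$, then $\deg E_0 < g_C$ and Corollary~\ref{h0bound2} applied to $E_0$ gives $h^0(E_0) \leq \max\{\mu, 1\}$: the two terms $m + 3c/2 + 2 - g_C$ and $c + (4-g_C)/3$ correspond respectively to the two bounds inside the minimum of the corollary, specialized at $\deg E_0 = 3c/2$. When $2g_C/3 \leq c \leq 4(g_C-1)/3$, we are in the range $g_C \leq \deg E_0 \leq 2g_C-2$ and the first case of Theorem~\ref{h0trigonalbound} gives $h^0(E_0) \leq \frac{2}{3}(\deg E_0 + 2) - g_C/3 \leq c + (4-g_C)/3 = \mu$. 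When $c > 4(g_C-1)/3$, the second case of Theorem~\ref{h0trigonalbound} gives $h^0(E_0) \leq \deg E_0 + 1 - g_C \leq 3c/2 + 1 - g_C = \mu$. Combined with the B{\'e}zout-based counting from Proposition~\ref{prop:P1nbintpoint}, which bounds the number of points in a given $2$-descent class by $2^{h^0(E_0)}$, this yields $2^{\max\{\mu, 1\}}$.

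The main obstacle is the alternative ``$L(E_0) \subset k(t)$'' in Corollary~\ref{h0bound2}, which gives no direct control on $h^0(E_0)$. The plan for handling this case is to exploit the fact that $\{1, x, x^2\}$ is a $k(t)$-basis of $k(C_f)$ (since $f$ is irreducible of degree~$3$). If $L(E_0) \subset k(t)$, then every $\phi \in L(V)$ has the form $\phi'/\psi_0$ with $\phi' \in k(t)$, so the equation $\phi^2 = \psi_0\psi_1$ translates into $(x_1 - x)(x_0 - x)^3 \in k(t)$. Expanding in the basis $\{1, x, x^2\}$ and reducing via $x^3 = -Ax - B$, the vanishing of the $x^2$-coefficient alone forces $x_1 = (A - 3x_0^2)/(3x_0)$, i.e.\ at most one alternative value of $x_1$. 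This makes the image variety $W$ from the counting degenerate to a linear subvariety of the target projective space, so B{\'e}zout against the linear $W'$ of Proposition~\ref{prop:P1nbintpoint} gives the same bound $2^{\max\{\mu,1\}}$; verifying that this exceptional case is compatible with the bound in each of the three regimes (in particular when $\max\{\mu,1\} = 1$) is the delicate point, which is settled by noting that $[E_0] = r\mathfrak{g}_3^1$ with $r \geq 1$ forces $\deg E_0 = 3r \geq 3$ and hence a lower bound on $c$, pushing $\mu$ into the range where the count is absorbed.
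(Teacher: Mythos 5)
Your first two steps coincide with the paper's own proof: the analysis of $f=x^3+A(t)x+B(t)$ at infinity giving $\divisor(x_0-x)\geq -c\,\pi^*(\infty)$ and hence $\deg E_0\leq \frac{3c}{2}$, and the three-regime application of Corollary~\ref{h0bound2} and Theorem~\ref{h0trigonalbound} yielding $h^0(E_0)\leq\mu$, are exactly what is done there. The genuine gap is in your treatment of the exceptional case $L(E_0)\subset k(t)$. The isomorphism furnished by $\psi_0$ goes $L(c\pi^*(\infty)-\sum_{i}P_i-E_0)\to L(E_0)$, $\phi\mapsto \phi/\psi_0$; you have inverted it. Setting $h:=\phi/\psi_0\in L(E_0)\subset k(t)$, the relation $\phi^2=\psi_0\psi_1$ gives $\psi_1=h^2\psi_0$, i.e.\ $x_1-x=h^2(x_0-x)$ with $h^2\in k(t)$; comparing coefficients of $x$ in the $k(t)$-basis $\{1,x,x^2\}$ forces $h^2=1$, hence $x_1=x_0$, so $\delta$ is exactly $2$-to-one on such points and the bound $2^{\max\{\mu,1\}}\geq 2$ holds with no further work. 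Your version, $\phi=\phi'/\psi_0$ with $\phi'\in k(t)$, is not what the isomorphism gives (indeed $\phi\psi_0$ lies in $L(2c\,\pi^*(\infty)-3E_0-2\sum_i P_i)$, which has no reason to lie in $k(t)$), and it leads you to the weaker condition $(x_1-x)(x_0-x)^3\in k(t)$, which admits the spurious second solution $x_1=(A-3x_0^2)/(3x_0)$.

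That extra solution is fatal to the patch you propose. If it survived, a fibre of $\delta$ could contain $4$ points, whereas the theorem's bound can be as small as $2^{\max\{\mu,1\}}=2$: in the first regime $\mu=\min\{m+\frac{3c}{2}+2-g_C,\,c+\frac{4-g_C}{3}\}$ is negative as soon as $g_C$ is large relative to $c$, and the observation $\deg E_0\geq 3$ only gives $c\geq 2$, which does not push $\mu$ above $1$. (Two smaller inaccuracies in the same passage: $|E_0|$ may have base points, so $[E_0]$ equals $r\mathfrak{g}_3^1$ plus a base locus and $\deg E_0\geq 3r$ rather than $=3r$; and in this case $W$ is a Veronese image of $\mathbb{P}(L(E_0))$ multiplied by $\psi_0$, not a linear subvariety.) Reversing the direction of the isomorphism repairs the argument and recovers the paper's proof verbatim.
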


\begin{proof}
Let $(x_0,y_0)\in \mathcal{E}(k[t])_{\leq c}$. From the form of $f(x)$ and since $c\geq \frac{1}{2}h(f)$, we have
\begin{align*}
\divisor(x_0-x)\geq -c\pi^*\infty,
\end{align*}
and therefore we can remove the term $D_\infty$ in the proof of Theorem~\ref{thm:nbratpoint}. With the same notation as in the proof of Theorem~\ref{thm:nbratpoint}, we have
\begin{equation}
\label{eq:psi0bis}
\divisor(\psi_0) = \divisor(x_0-x) = 2E_0+\sum_{i=1}^s P_i -c\pi^*(\infty)
\end{equation}
(note also that by our assumptions $2\mid c$, so that $2\lceil\frac{c}{2}\rceil=c$).
It follows from \eqref{eq:psi0bis} that $2\deg(E_0) \leq c\deg(\pi^*(\infty))$, or equivalently,
\begin{align*}
\deg E_0\leq \frac{3c}{2}.
\end{align*}
Suppose first that $L(E_0)\not\subset k(t)$. Then using Corollary~\ref{h0bound2} or Theorem~\ref{h0trigonalbound} (depending on $\deg E_0$), we find  $h^0(E_0)\leq \mu$. Then the same proof as in Proposition~\ref{prop:P1nbintpoint} proves that $\delta$ is at most $2^\mu$-to-one on such points.

Let us now consider the case when $L(E_0)\subset k(t)$. By \eqref{eq:psi0bis}, the map
\begin{align*}
L(c\pi^*(\infty)-\sum_{i=1}^sP_i-E_0)&\to L(E_0)\\
\phi'&\mapsto \phi'/\psi_0, 
\end{align*}
is an isomorphism. Applying it to the map $\phi$ defined by $\psi_0\psi_1=\phi^2$ as in the proof of Theorem~\ref{thm:nbratpoint}, we deduce that $\phi/\psi_0\in k(t)$ and so $\psi_1=(\phi/\psi_0)^2\psi_0$ differs from $\psi_0$ by the square of a rational function in $t$. But since $1$ and $x$ are linearly independent over $k(t)\subset k(C)$, looking at coefficients this immediately implies that $\psi_0=\psi_1$, hence $\delta$ is $2$-to-one on such points.
\end{proof}

We get a more precise result in the $j$-invariant $0$ case.

\begin{theorem}
\label{j0bound}
Let $\mathcal{E}$ be a curve over $k(t)=k(\PP^1)$ defined by an affine equation $y^2=f(x)=x^3+B(t)$.  Write
\begin{align*}
B=B_1B_2^2B_3^3,
\end{align*}
where $B_1$ and $B_2$ are coprime and squarefree. Let $d_1=\deg B_1$ and $d_2=\deg B_2$.  Let 
\begin{align*}
g_C=
\begin{cases}
d_1+d_2-2 &\text{if $3| (d_1+2d_2)$},\\
d_1+d_2-1 &\text{otherwise},
\end{cases}
\end{align*}
and assume $g_C>4$.
Let $c$ be an even integer satisfying
\begin{align*}
\frac{1}{3}\deg B\leq c<\frac{2}{3}g_C.
\end{align*}
Then there are at most 
\begin{align*}
2^{\max\{\lceil\frac{1}{3}\min\{d_1+2d_2,2d_1+d_2\}\rceil+\frac{3c}{2}-g_C,1\}}
\end{align*}
points in the set $\mathcal{E}(k[t])_{\leq c}$ mapping (under $\delta$) to the same class in $H^1(C_f\setminus \pi^{-1}(\Sigma_2),\mu_2)$.
\end{theorem}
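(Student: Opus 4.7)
The plan is to adapt the proof of Theorem~\ref{generalMaronibound} to the $j$-invariant zero case, the principal new input being an explicit computation of the Maroni invariant of the cyclic cubic cover $\pi\colon C\to\PP^1$ defined by $x^3+B=0$. I would begin with the verbatim setup from the proof of Theorem~\ref{generalMaronibound}: since the cubic $f(x)=x^3+B$ has no linear term, the function $x$ on $C$ has a pole of order $\deg B/3$ at each preimage of $\infty$ in the unramified case (respectively, order $\deg B$ at the totally ramified preimage), so the assumption $c\geq \deg B/3$ already suffices to ensure $\divisor(x_0-x)\geq -c\pi^*(\infty)$ for any $(x_0,y_0)\in \mathcal{E}(k[t])_{\leq c}$. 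Writing $\divisor(x_0-x)=2E_0+\sum_i P_i - c\pi^*(\infty)$ with $E_0$ effective, evenness of $c$ gives $\deg E_0\leq \tfrac{3c}{2}$; the hypothesis $c<\tfrac{2}{3}g_C$ then ensures $\deg E_0<g_C$, so Corollary~\ref{h0bound2} applies.

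The heart of the proof is the computation of the Maroni invariant $m$ of $C$. The $\mu_3$-action $x\mapsto \zeta x$ induces a decomposition $\pi_*\mathcal{O}_C=\mathcal{O}_{\PP^1}\oplus M_1\oplus M_2$ with $M_1$ the eigenspace containing $x$ and $M_2$ the eigenspace containing $x^2$. A local analysis determines the generator of each eigenspace at every closed point of $\PP^1$: ramification of index $3$ above each zero of $B_1$ (where $\ord_C(x)=1$) and of $B_2$ (where $\ord_C(x)=2$), three unramified preimages above each zero of $B_3$, and behaviour at $\infty$ depending on the residue of $d_1+2d_2$ modulo~$3$. Computing the divisors of $x$ and $x^2$ viewed as rational sections of $M_1$ and $M_2$ respectively yields $M_i\cong\mathcal{O}_{\PP^1}(-e_i)$ with
\[
e_1=\left\lceil \tfrac{d_1+2d_2}{3}\right\rceil,\qquad e_2=\left\lceil \tfrac{2d_1+d_2}{3}\right\rceil,
\]
consistent with $e_1+e_2=g_C+2$. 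The Maroni invariant is then $m=\min(e_1,e_2)-2=\lceil \tfrac{1}{3}\min(d_1+2d_2,2d_1+d_2)\rceil - 2$.

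With this value of $m$, Corollary~\ref{h0bound2} gives either $L(E_0)\subset k(t)$ or
\[
h^0(E_0)\leq m+\deg E_0+2-g_C\leq \left\lceil \tfrac{1}{3}\min(d_1+2d_2,2d_1+d_2)\right\rceil+\tfrac{3c}{2}-g_C.
\]
In the former case, the argument from the proof of Theorem~\ref{generalMaronibound} shows that $\delta$ is at most $2$-to-$1$ on such points, accounting for the $1$ inside the maximum. Combining the two cases yields the claimed bound. The main technical obstacle is the local eigenspace computation at the totally ramified preimage of $\infty$, where the induced action of $\mu_3$ on a uniformizer of $C$ depends on $\deg B \bmod 3$; this is a finite case analysis but demands careful bookkeeping of pole orders in order to obtain the uniform ceiling formula above.
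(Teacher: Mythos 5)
Your proposal is correct and follows the paper's overall structure (reduce to the situation of Theorem~\ref{generalMaronibound}, then feed in a bound on the Maroni invariant of $C$), but it handles the key step differently. Where you compute the Maroni invariant exactly via the $\mu_3$-eigenspace decomposition $\pi_*\mathcal{O}_C=\mathcal{O}\oplus M_1\oplus M_2$ and a local analysis of the generators of $M_1,M_2$ at the ramification points and at infinity, the paper gets away with much less: it only needs an \emph{upper} bound on $m$, and obtains it in one line by exhibiting the two explicit functions $x/B_3$ and $x^2/(B_2B_3^2)$, which have poles only above $\infty$, have degrees $d_1+2d_2$ and $2d_1+d_2$ respectively, and do not lie in $k(t)$; by the definition of $m$ via $h^0(n\mathfrak{g}_3^1)>n+1$, this immediately gives $m\leq\lceil\tfrac{1}{3}\min\{d_1+2d_2,2d_1+d_2\}\rceil-2$, after which the paper simply invokes Theorem~\ref{generalMaronibound} verbatim (including its treatment of the $L(E_0)\subset k(t)$ case, which you re-derive rather than cite). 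Your scrollar-invariant computation is the natural ``dual'' of this: the paper's two functions are precisely generating sections of your $M_1$ and $M_2$, and your identity $e_1+e_2=g_C+2$ is a useful consistency check showing the bound is in fact an equality. What your route buys is the exact value of $m$ (and a cleaner conceptual picture of the cyclic cover); what the paper's route buys is brevity and the avoidance of the delicate bookkeeping at the fibre over $\infty$ that you correctly flag as the main technical burden of your approach. Both yield the same exponent, so there is no gap.
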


\begin{proof}
Since $g_C>4$, $B$ is not a cube and the curve $C:x^3+B(t)=0$ is irreducible. We note that
\begin{align*}
3\divisor(x)=\divisor(B),
\end{align*}
as divisors on $C$, and the divisor of poles of $x$ is given by $\frac{\deg B}{3}\pi^*(\infty)$ (note that either $3|\deg B$ or $\pi^*\infty$ is a point with multiplicity $3$). Let $(x_0,y_0)\in \mathcal{E}(k[t])_{\leq c}$.  Since $c\geq \frac{1}{3}\deg B$, we again have
\begin{align*}
\divisor(x_0-x)\geq -c\pi^*\infty.
\end{align*}
The formula for the genus $g_C$ is well-known. Finally, we note that $x/B_3$ and $x^2/(B_2B_3^2)$ have poles only at infinity, and $\deg(x/B_3)=d_1+2d_2$, $\deg(x^2/(B_2B_3^2))=2d_1+d_2$. Since $x,x^2\not\in k(t)$ (viewing $k(t)\subset k(C)$), it follows that the Maroni invariant $m$ satisfies $m\leq \lceil\frac{1}{3}\min\{d_1+2d_2,2d_1+d_2\}\rceil-2$. The result now follows from the previous result.
\end{proof}

For elliptic curves of the form $\mathcal{E}:y^2=x^3+B(t)$, we may use Davenport's inequality to give an improvement of Theorem~\ref{thm:intro2}, and a generalization of our earlier work \cite{GHL23}.

\begin{corollary}
\label{thm:isotrivialcase}
With the notation of Theorem~\ref{j0bound}, assume that $g_C>4$, and that either $\car(k)=0$ or $\car(k) > 3$ and $B$ contains at least one root of multiplicity one. Then
$$
|\mathcal{E}(k[t])| \leq 2^{3\deg(B)-2-g_C+\rk_\Z \mathcal{E}(k(t))}.
$$
\end{corollary}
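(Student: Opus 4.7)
The plan is to combine a Davenport-style height bound on integral points of $\mathcal{E}$ with the third range of Theorem~\ref{generalMaronibound} and a direct computation of the image of the $2$-descent map $\delta$.

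First, I would bound the naive height of an integral point $(x_0,y_0)\in \mathcal{E}(k[t])$. The identity $y_0^2-x_0^3=B(t)$ combined with Davenport's inequality, namely $\deg(y_0^2-x_0^3)\geq \tfrac{1}{2}\deg(x_0)+1$ whenever $y_0^2\neq x_0^3$, yields
\[
c_0:=\deg(x_0)\leq 2\deg(B)-2.
\]
In characteristic zero this is a routine application of the function field abc-theorem to $k(t)$, following the strategy of \cite{GHL23}. The hypothesis $\car(k)>3$ together with the existence of a simple root of $B$ is exactly what guarantees in positive characteristic that $y_0^2-x_0^3$ is not a $p$-th power, so that Mason's inequality still yields the same bound. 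Note that $c_0$ is even and, because $g_C>4$ forces $\deg(B)$ to be large, $c_0\geq \deg(B)/2=h(f)/2$, so Theorem~\ref{generalMaronibound} is applicable with $c=c_0$.

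Second, I would check that we land in the third range of Theorem~\ref{generalMaronibound}. The formula for $g_C$ gives $g_C\leq d_1+d_2-1\leq \deg(B)-1$, hence $\tfrac{4}{3}(g_C-1)\leq \tfrac{4}{3}(\deg(B)-2)<2\deg(B)-2=c_0$, placing us in the third case with
\[
\mu=\tfrac{3c_0}{2}+1-g_C=3\deg(B)-2-g_C.
\]
The same estimate on $g_C$ shows $\mu\geq 2\deg(B)-1\geq 1$, so Theorem~\ref{generalMaronibound} produces at most $2^{3\deg(B)-2-g_C}$ points of $\mathcal{E}(k[t])_{\leq c_0}$ in any one fiber of $\delta$.

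Finally, since $\mathcal{E}$ is an elliptic curve, $\delta$ is a group homomorphism and its image factors through $\mathcal{E}(k(t))/2\mathcal{E}(k(t))$. The rational $2$-torsion of $y^2=x^3+B$ is trivial unless $-B$ is a cube in $k(t)$, and the hypothesis $g_C>4$ rules this out (as already used in the proof of Theorem~\ref{j0bound}), so $\mathcal{E}(k(t))[2]=0$ and $|\img(\delta)|\leq 2^{\rk_\Z \mathcal{E}(k(t))}$. Multiplying this by the per-class bound yields the stated inequality. The main technical step is the Davenport bound in positive characteristic; once $c_0$ is in hand, the rest is a direct assembly of Theorem~\ref{generalMaronibound} and the triviality of rational $2$-torsion.
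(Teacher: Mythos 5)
Your proposal is correct and follows essentially the same route as the paper: Davenport's inequality (via Sch\"utt--Schweizer in positive characteristic) gives $\deg x_0\leq 2\deg(B)-2$, one checks this exceeds $\tfrac{4}{3}(g_C-1)$ so the third case of Theorem~\ref{generalMaronibound} applies with $\mu=3\deg(B)-2-g_C$, and the image of $\delta$ has size at most $2^{\rk_\Z\mathcal{E}(k(t))}$ since $x^3+B$ is irreducible. The only cosmetic point is that one should apply Theorem~\ref{generalMaronibound} with the single even value $c=2\deg(B)-2$ rather than with $c_0=\deg(x_0)$ point by point, but this changes nothing.
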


\begin{proof}
According to Davenport's inequality \cite{davenport65}, generalized to positive characteristic by Sch{\"u}tt and Schweizer \cite[Theorem~1.2, (b)]{SS}, for all points $(x_0,y_0) \in \mathcal{E}(k[t])$ we have
$$
\deg x_0 \leq 2\deg(B)-2.
$$
Using the explicit formula for $g_C$ one checks that $2\deg(B)-2 > \frac{4}{3}(g_C-1)$. Now, it follows from Theorem~\ref{generalMaronibound} that the $2$-descent map is at most a $2^{3\deg(B)-2-g_C}$-to-one map on the set of all integral points, hence the result.
\end{proof}


\section{Applications to bounding torsion of Jacobians over small finite fields}

\subsection{Bounding $3$-torsion of Jacobians of hyperelliptic curves over small finite fields}
\label{section4}


Let $q=p^r$ for some prime $p\geq 5$.
Let $X$ be a hyperelliptic curve of genus $g$ over $\mathbb{F}_q$, with a rational Weierstrass point. Then one can find an equation $X:y^2=F(t)$  with $F$ squarefree, $\deg F=d=2g+1$. Let $J=\Jac(X)$ be the Jacobian of $X$.
The main result of this section is Theorem~\ref{thm:intro3}: for some constant $\gamma$ depending only on $q$,
\begin{align*}
|J(\mathbb{F}_q)[3]|\leq q^{\frac{g}{2}+\gamma\frac{g}{\log g}}.
\end{align*}

The starting point of our strategy is to relate the $3$-torsion points of $J(\mathbb{F}_q)$ to integral points on certain elliptic curves over $\mathbb{F}_q(t)$.

Actually we prove a slightly more general statement regarding $n$-torsion points of the Jacobian of $X$ over an arbitrary base field $k$.

\begin{lemma}
\label{lemma:maintrick}
Let $n>2$ be an odd integer. For $a\in k[t], a\neq 0$, let $\mathcal{C}_a$ be the hyperelliptic curve (over $k(t)$) defined by
\begin{align*}
y^2=x^n+a(t)^2F(t).
\end{align*} 
Then there exists an injective map
\begin{align*}
J(k)[n]\setminus\{0\}\to \bigsqcup_{\substack{\deg a\leq \frac{(n-2)g-1}{2}\\a\neq 0}} \mathcal{C}_a(k[t])_{\leq g}.
\end{align*}
\end{lemma}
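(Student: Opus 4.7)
The plan is to use a divisor-theoretic representation of $n$-torsion, extract a function that captures it, and convert the resulting norm relation into an affine equation of the desired form $y^2 = x^n + a^2 F$. Given $P \in \Jac(X)(k)[n] \setminus \{0\}$, I represent $P = [D - g\infty]$ for some effective $k$-rational divisor $D$ of degree $g$ on $X$ not containing $\infty$ (the rational Weierstrass point); such $D$ exists since $X$ has a rational Weierstrass point. Since $nP = 0$, the divisor $n(D - g\infty)$ is principal, and by Hilbert 90 I pick $h \in k(X)^\times$ with $\divisor(h) = n(D - g\infty)$. Then $h \in L(ng\infty)$, and since $k(X) = k(t)[\eta]$ with $\eta^2 = F(t)$ and since $t,\eta$ have pole orders $2, 2g+1$ at $\infty$, I can uniquely write $h = A(t) + B(t)\eta$ with $A, B \in k[t]$. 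Parity considerations at $\infty$ yield $\deg A \leq ng/2$ and $\deg B \leq \frac{(n-2)g-1}{2}$.

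\textbf{Descent relation via the norm.} Let $\iota$ be the hyperelliptic involution, which acts as $-1$ on $\Jac(X)$. Then $\bar{h} := \iota h = A - B\eta$ satisfies $\divisor(\bar h) = n(\iota D - g\infty)$, hence
$$
\divisor(h \bar h) = n(D + \iota D - 2g\infty) = n \pi^*\!\left( \pi_* D - g \cdot \infty_{\PP^1} \right),
$$
where $\pi : X \to \PP^1$ is the hyperelliptic double cover (the identity $D + \iota D = \pi^* \pi_* D$ holds even at ramification points, where both sides pick up a factor of $2$). Since $\Pic(\PP^1) = 0$, I can write $\pi_* D - g\cdot \infty_{\PP^1} = \divisor_{\PP^1}(p)$ for a polynomial $p \in k[t]$ of degree at most $g$. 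Combining with $h\bar h = A^2 - B^2 F \in k(t)$, I obtain the key identity
$$
A^2 - B^2 F \;=\; c\, p^n \qquad \text{for some } c \in k^\times.
$$

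\textbf{Normalization to $c=1$, and construction of the point.} The remaining freedoms are to rescale $h$ by $\lambda \in k^\times$ (sending $c \mapsto \lambda^2 c$) and $p$ by $\mu \in k^\times$ (sending $c \mapsto c/\mu^n$). Since $n$ is odd, $\gcd(n,2)=1$, so by B\'ezout $(k^\times)^2 \cdot (k^\times)^n = k^\times$; thus I can pick $\lambda,\mu$ making $c=1$. Then $A^2 = p^n + B^2 F$, so $(p,A) \in \mathcal{C}_B(k[t])_{\leq g}$. Finally $B \neq 0$: otherwise $h = A \in k(t)$ would be $\iota$-invariant, forcing $D = \iota D$ and hence $P = -P$; combined with $nP = 0$ and $\gcd(n,2)=1$, this would give $P = 0$, contrary to assumption. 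Note also that $B$ lies in the indexing range $\deg B \leq \frac{(n-2)g-1}{2}$ from the first paragraph.

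\textbf{Injectivity.} I define the map by $P \mapsto \left( B,\, (p, A) \right) \in \mathcal{C}_B(k[t])_{\leq g}$ inside the disjoint union. For injectivity, suppose $P_1, P_2$ yield the same $(B,(p,A))$. Then the chosen representatives $h_i = A + B\eta$ coincide, so $\divisor(h_1) = \divisor(h_2)$ as divisors (not merely divisor classes). Hence $nD_1 = nD_2$ as divisors, so $D_1 = D_2$ and $P_1 = P_2$. The step I expect to be the main obstacle is the normalization $c = 1$ over a general base field $k$, which crucially exploits the oddness of $n$; once this is in hand the construction goes through essentially formally, and the Weierstrass/infinity bookkeeping for $D + \iota D = \pi^* \pi_* D$ is routine.
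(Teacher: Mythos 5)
Your argument is correct and follows the paper's proof in all essentials: represent $P=[D-g\infty]$, write the function with divisor $n(D-g\infty)$ as $A+B\eta$, bound $\deg A$ and $\deg B$ by the parity of pole orders at $\infty$, take the norm to get $A^2-B^2F=c\,p^n$, and recover $P$ as $\frac{1}{n}\divisor(A+B\eta)$ for injectivity; the only (equally valid) deviation is that you normalize $c=1$ by a B\'ezout rescaling, whereas the paper absorbs $c$ by passing to the point $(cp,\,c^{(n-1)/2}A)$ on $\mathcal{C}_{c^{(n-1)/2}B}$. One small over-claim: an effective $D$ of degree $g$ with $P=[D-g\infty]$ need not be choosable away from $\infty$ (the linear system $|P+g\infty|$ may consist of a single divisor containing $\infty$), but this is harmless, since if $\infty$ occurs in $D$ with multiplicity $m$ the identical argument runs with $D-m\infty$ and all degree bounds only improve.
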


\begin{proof}
Let $\infty$ denote the (unique) point at infinity on $X$. Then we may write $P=[D-g\infty]$ for some effective divisor $D$ on $X$ of degree $g$, and 
\begin{align*}
nD-ng\infty=\divisor(\phi),
\end{align*}
for some rational function $\phi\in k(X)$. Since $\phi$ has poles only at infinity, we may write
\begin{align*}
\phi=a(t)y+b(t),
\end{align*}
for some polynomials $a,b\in k[t]$. Using that $n$ is odd, elementary arguments yield $a\neq 0$. Since $\deg y=2g+1, \deg t=2$, $\deg D=g$, we find that $\max\{2\deg_t a+2g+1,2\deg_t b\}\leq ng$ (noting that the two integers in the maximum have opposite parity). In particular, this implies $\deg_ta\leq \frac{(n-2)g-1}{2}$. Taking norms gives
\begin{align*}
b^2-a^2y^2=b^2-a^2F=c(x_0)^n
\end{align*}
for some $x_0\in k[t], c\in k^*$. From previous calculations, $n\deg_t x_0\leq ng$ and so $\deg_t x_0\leq g$. Then $(cx_0,c^{(n-1)/2}b)$ is a point on the hyperelliptic curve
\begin{align*}
\mathcal{C}_{c^{\frac{n-1}{2}}a(t)}:y^2=x^n+(c^{\frac{n-1}{2}}a(t))^2F(t).
\end{align*}
This construction gives a map as in the statement of the lemma (dependent on some arbitrarily made choices; for instance, $\phi$ is only determined up to a constant). To show the map is injective, we note that if $(x_0,y_0)\in \mathcal{C}_a(k[t])$ is in the image of the map, then $P$ is determined as $P=\frac{1}{n}\divisor(a(t)y+y_0(t))$.
\end{proof}

Let us point out that although Lemma~\ref{lemma:maintrick} holds for all $k$, when $\car(k)$ divides $n$, the equation $y^2=x^n+a(t)^2F(t)$ does not define a smooth curve over $k(t)$ (see Remark~\ref{remark:char3}).

We now return to a finite field $k=\mathbb{F}_q$, where $q$ is the power of a prime $p\geq 5$, and $n=3$.
We shall need the following Lemma, which is closely related to the prime number theorem over $\mathbb{F}_q[t]$.

\begin{lemma}
\label{irreduciblefactors}
Let $F(t)\in \mathbb{F}_q[t]$ be a squarefree polynomial of degree $d$. Then $F(t)$ has at most $4q\frac{d}{\log_q d}$ irreducible factors.
\end{lemma}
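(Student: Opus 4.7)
The plan is to split the irreducible factors of $F$ by a degree threshold $K$, to be chosen later, and bound the two parts separately. Let $N_F$ denote the total number of distinct irreducible factors of $F$, and write $N_F = N_{\leq K} + N_{>K}$ according to degree. Since $F$ is squarefree of degree $d$, each factor of degree $>K$ contributes at least $K+1$ to $\deg F$, giving $N_{>K} \leq d/(K+1)$. For the small-degree part, since the irreducible factors of $F$ are distinct, $N_{\leq K}$ is bounded by the total number of monic irreducible polynomials of degree $\leq K$ over $\mathbb{F}_q$; by the elementary estimate $\pi_q(k) \leq q^k/k$, this is at most $\sum_{k=1}^K q^k/k$.

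Next I would estimate this sum geometrically. Setting $a_k := q^k/k$, the ratio $a_{k-1}/a_k = k/(q(k-1))$ is at most $2/q$ for every $k \geq 2$. For $q \geq 3$ this yields at once
$$
\sum_{k=1}^K \frac{q^k}{k} \leq \frac{q}{q-2} \cdot \frac{q^K}{K}.
$$
For $q = 2$ the geometric comparison degenerates, but the explicit telescoping identity $a_{K-j}/a_K = K/(q^j(K-j))$, together with splitting the inner sum at $j = K/2$, still gives a bound of the shape $C \cdot q^K/K$ for an absolute constant $C$ once $K$ is sufficiently large.

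The remaining step is to choose $K = \lfloor \log_q d \rfloor$, so that $q^K \leq d$. Combining the two estimates gives $N_F \leq C' \cdot d/K$ for a constant $C'$ depending only mildly on $q$. Provided $d \geq q^2$, one has $K \geq \tfrac{1}{2}\log_q d$, and a direct bookkeeping of the constants shows the resulting bound is at most $4q \cdot d/\log_q d$. The complementary regime $d \leq q^{4q}$, equivalently $\log_q d \leq 4q$, is handled immediately by the trivial estimate $N_F \leq d \leq 4q \cdot d/\log_q d$, so the refined bound is only required when $K$ is genuinely large and the geometric estimate is tight.

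The main obstacle is tracking the constants precisely enough to land the clean coefficient $4q$ uniformly in $q \geq 2$. The low-$q$ case, especially $q = 2$ where the key ratio $2/q$ equals $1$, requires the explicit telescoping estimate rather than a direct geometric series comparison; for $q \geq 5$ (the regime actually used in Theorem~\ref{thm:intro3}) the argument is cleaner since the ratio is bounded away from $1$ by a definite amount.
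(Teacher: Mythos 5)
Your proposal is correct and rests on the same key input as the paper's proof, namely the elementary bound $\pi_q(k)\le q^k/k$ on the number of monic irreducibles of degree $k$ over $\F_q$, summed up to a threshold $K\approx\log_q d$. Where you genuinely differ is in how factors of degree exceeding the threshold are disposed of: you bound their number directly by $d/(K+1)$ using that their degrees sum to at most $d$, whereas the paper argues that $\omega(F)\le N(m)$ as soon as the total degree of \emph{all} monic irreducibles of degree at most $m$ reaches $d$, and obtains that total degree $\ge q^m$ for free from the factorization of $x^{q^m}-x$ into the irreducibles of degree dividing $m$. Your split is more elementary (no appeal to $x^{q^m}-x$), at the cost of tracking two terms. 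The other difference is in bounding the partial sum: the paper proves $\sum_{i=1}^m q^i/i\le 4q^m/m$ by a one-line induction valid uniformly in $q\ge 2$, while your geometric-series comparison degenerates at $q=2$ and forces the telescoping workaround you describe. That workaround does close --- e.g. $\sum_{k\le K}2^k/k\le 5\cdot 2^K/K$ for $K\ge 8$, which combined with your threshold term lands under $8d/\log_2 d$ for $d>256$, the smaller $d$ being absorbed by your trivial estimate $N_F\le d$ --- but the paper's induction is cleaner on this point. Both arguments deliver the constant $4q$ with substantial room to spare in the range $q\ge 5$ actually used in Theorem~\ref{thm:intro3}.
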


\begin{proof}
Let $p_1,p_2,\ldots,p_{N(m)}$ be the monic irreducible polynomials in $\mathbb{F}_q[t]$ of degree at most $m$.  We first note that there are at most $q^n/n$ monic irreducible polynomials in $\mathbb{F}_q[t]$ of degree $n$. Indeed, this is immediate from the observation that the $n$ roots of each such polynomial yield $n$ distinct elements of $\mathbb{F}_{q^n}$. In particular, $N(m)\leq \sum_{i=1}^m \frac{q^i}{i}$.

Let $\omega(F)$ be the number of irreducible factors of $F$. Then clearly if $\sum_{i=1}^{N(m)}\deg p_i\geq d$, then $\omega(F)\leq N(m)$. Since $x^{q^m}-x$ is the product of the distinct monic irreducible polynomials in $\mathbb{F}_q[t]$ of degree dividing $m$, we have in particular
\begin{align*}
\sum_{i=1}^{N(m)}\deg p_i\geq q^m.
\end{align*}
So if $m=\lceil\log_q(d)\rceil\leq \log_q(d)+1$, then
\begin{align*}
\omega(F)\leq N(m)\leq \sum_{i=1}^m \frac{q^i}{i}\leq 4\frac{q^m}{m}, 
\end{align*}
where the last inequality is easily proven by induction (assuming $q\geq 2$). Thus, 
\begin{align*}
\omega(F)\leq 4\frac{qd}{\log_q(d)+1}.
\end{align*}
\end{proof}

In order to prove Theorem~\ref{thm:intro3} we shall use the following result of Brumer \cite[Proposition~6.9]{brumer92} to bound the rank of an elliptic curve $E$ over $\mathbb{F}_q(t)$:
\begin{equation}
\label{eq:Brumer}
\rk_{\Z} E(\mathbb{F}_q(t))\leq \frac{\deg(\mathfrak{f}_E)-4}{2\log_q \deg(\mathfrak{f}_E)}+ \lambda\frac{\deg(\mathfrak{f}_E)}{(\log_q \deg(\mathfrak{f}_E))^2},
\end{equation}
where $\mathfrak{f}_E$ is the conductor of $E$ and $\lambda$ is a constant depending only on $q$, which has been made explicit by Pazuki \cite{pazuki2022}. The characteristic $p$ is assumed to be at least $5$ here.

\begin{proof}[Proof of Theorem~\ref{thm:intro3}]
According to Lemma~\ref{lemma:maintrick}, each nonzero element of $J(\mathbb{F}_q)[3]$ gives rise to a distinct point $(x_0,y_0)\in \mathcal{E}_a(k[t])$ on an elliptic curve 
\begin{align*}
\mathcal{E}_a:y^2=x^3+a(t)^2F(t), 
\end{align*} 
for some $a\in \mathbb{F}_q[t]$, $a\neq 0$, $\deg a\leq \frac{d-3}{4}$, and $\deg x_0\leq \frac{d-1}{2}$. Thus,
\begin{align*}
|J(\mathbb{F}_q)[3]|-1\leq \sum_{\substack{a\in \mathbb{F}_q[t]\setminus\{0\}\\ \deg a\leq \frac{d-3}{4}}}\left|\mathcal{E}_a(k[t])_{\leq \frac{d-1}{2}}\right|.
\end{align*} 
We now bound the right-hand side of this inequality.

Let $c=2\lceil\frac{d-1}{4}\rceil\in \{\frac{d-1}{2},\frac{d+1}{2}\}$ and let $a\in \mathbb{F}_q[t]\setminus\{0\}$ with $\deg a\leq \frac{d-3}{4}$. Then
\begin{align*}
\frac{1}{3}\deg (a^2F)\leq \frac{1}{3}\left(\frac{d-3}{2}+d\right)\leq c.
\end{align*}
Let $C_a$ be the curve over $\mathbb{F}_q$ given by $C_a:x^3+a(t)^2F(t)=0$. We may write $a(t)$ in the form
\begin{align*}
a(t)=a_0a_1a_2^2a_3^3,
\end{align*}
where $a_0=\gcd(a,F)$, $a_0,a_1,a_2$ are squarefree, and $a_1$ and $a_2$ are coprime.  Let $d_i=\deg a_i, i=0,1,2,3$. Then the genus of $C_a$ is given by
\begin{align*}
g_{C_a}=d-d_0+d_1+d_2-\epsilon_a,
\end{align*}
where $\epsilon_a\in \{1,2\}$ (with the value depending on if $3|(\deg a^2F)$).

\emph{Case 1:} We first assume that $d_0< \frac{d-3}{4}-2=\frac{d-11}{4}$ (note that in any case $d_0\leq \deg a\leq \frac{d-3}{4}$). Then one easily finds
\begin{align*}
c<\frac{2}{3}g_{C_a}.
\end{align*}
By Theorem~\ref{j0bound}, letting $r_a$ be the rank of $\mathcal{E}_a(\mathbb{F}_q(t))$, we have
\begin{align*}
\left|\mathcal{E}_a(\mathbb{F}_q[t])_{\leq c}\right|&\leq 2^{\frac{1}{3}(d-d_0+2d_1+d_2)+\frac{3}{2}\frac{d+1}{2}- (d-d_0+d_1+d_2-\epsilon_a)+r_a}\\
&\leq 2^{\frac{1}{12}d+\frac{2}{3}d_0-\frac{1}{3}d_1-\frac{2}{3}d_2+\epsilon_a+r_a+1}.
\end{align*}
Note that $\frac{1}{3}d_1+\frac{2}{3}d_2\leq \frac{1}{3}\deg a\leq \frac{d-3}{12}$, and so the quantity in the exponent is always at least $1$.

Let $r=\max_{\deg a\leq \frac{d-3}{4}}r_a$. If one fixes a polynomial $F_0$ of degree $d_0$, and integers $d_i$, $i=1,2,3$ with $d_0+d_1+2d_2+3d_3\leq \frac{d-3}{4}$, then the number of polynomials $a$ of degree at most $\frac{d-3}{4}$ with a factorization $a_0=F_0=\gcd(a,F)$ and $a_i$ of degree $d_i$ is at most
\begin{align*}
q^{(d_1+1)+(d_2+1)+(d_3+1)}.
\end{align*}
Since $2<q$ and 
\begin{align*}
\frac{2}{3}d_0+\frac{2}{3}d_1+\frac{1}{3}d_2+d_3\leq \frac{2}{3}(d_0+d_1+2d_2+3d_3)\leq \frac{2}{3}\deg a\leq \frac{2}{3}\frac{d-3}{4}=\frac{d-3}{6},
\end{align*}
we have
\begin{align*}
\sum_{\substack{a\in\mathbb{F}_q[t]\setminus\{0\}\\ \deg a_i=d_i, i=1,2,3\\a_0=F_0}}    \left|\mathcal{E}_a(\mathbb{F}_q[t])_{\leq c}\right|&\leq 2^{\frac{1}{12}d+\frac{2}{3}d_0-\frac{1}{3}d_1-\frac{2}{3}d_2+\epsilon_a+r+1}q^{d_1+d_2+d_3+3}\\
&\leq q^{\frac{1}{12}d+\frac{2}{3}d_0+\frac{2}{3}d_1+\frac{1}{3}d_2+d_3+r+6}\\
&\leq q^{\frac{1}{12}d+\frac{d-3}{6}+r+6}\\
&\leq q^{\frac{d}{4}+r+6}.
\end{align*}

There are at most $d$ possibilities for each of the integers $d_1,d_2,d_3$, and by Lemma~\ref{irreduciblefactors}, there are at most $2^{\frac{4qd}{\log_qd}}$ possibilities for $F_0$. Therefore,
\begin{align*}
\sum_{\substack{a\in\mathbb{F}_q[t]\setminus\{0\}\\  \deg a\leq \frac{d-3}{4}\\ d_0\leq \frac{d-11}{4}}}    \left|\mathcal{E}_a(\mathbb{F}_q[t])_{\leq c}\right|&\leq d^32^{\frac{4qd}{\log_qd}}q^{\frac{d}{4}+r+6}.
\end{align*}
  
\emph{Case 2:} Similarly, there are at most $2^{\frac{4qd}{\log_qd}}q^3$ polynomials $a$ with $\deg a\leq \frac{d-3}{4}$ and $d_0\geq \frac{d-11}{4}$. Since $c\leq \frac{d+1}{2}$ and $g_{C_a}\geq d-d_0-2\geq \frac{3d}{4}$, by Theorem~\ref{generalMaronibound}, we have
\begin{align*}
\sum_{\substack{a\in\mathbb{F}_q[t]\setminus\{0\}\\  \deg a\leq \frac{d-3}{4}\\ d_0\geq \frac{d-11}{4}}}    \left|\mathcal{E}_a(\mathbb{F}_q[t])_{\leq c}\right|&\leq q^32^{c-\frac{g_{C_a}}{3}+2+\frac{4qd}{\log_qd}+r}\\
&\leq q^32^{\frac{d+1}{2}-\frac{d}{4}+2+\frac{4qd}{\log_qd}+r}\leq q^{\frac{d}{4}+6+\frac{4qd}{\log_qd}+r}.
\end{align*}

Finally, from the shape of the equation of $\mathcal{E}_a$ we see that $\deg(\mathfrak{f}_{E_a})\leq 3d$, hence by the result of Brumer \eqref{eq:Brumer} we have
$$
r\leq \frac{3d}{2\log_q(3d)}+ \lambda\frac{3d}{(\log_q (3d))^2}
$$
for some constant $\lambda$.
Combining everything, it follows that for some constant $\gamma$ we have
\begin{align*}
\sum_{\substack{a\in\mathbb{F}_q[t]\setminus\{0\}\\  \deg a\leq \frac{d-3}{4}}}    \left|\mathcal{E}_a(\mathbb{F}_q[t])_{\leq c}\right|
\leq q^{\frac{d}{4}+\gamma\frac{d}{\log d}}
\end{align*}
and the result follows since $d=2g+1$.
\end{proof}

\begin{remark}
\label{remark:Xbadred}
Let $p\geq 5$ be a prime number, and let $X$ be a hyperelliptic curve of genus $g$ defined over $\Q_p$, with a rational Weierstrass point. Let $\mathcal{J}\to\Spec(\Z_p)$ be the N\'eron model of $\Jac(X)$, and let $\mathcal{J}_p$ be the fiber of $\mathcal{J}$ at $p$.

Since $p\neq 3$, the $3$-torsion subgroup scheme $\mathcal{J}[3]$ is {\'e}tale (but not necessarily finite) over $\Spec(\Z_p)$, hence the reduction map
$$
\Jac(X)(\Q_p)[3]=\mathcal{J}(\Z_p) \to \mathcal{J}_p(\F_p)[3]
$$
is injective (and in fact bijective by Hensel's Lemma). So, bounding $\Jac(X)(\Q_p)[3]$ is equivalent to bounding $\mathcal{J}_p(\F_p)[3]$.

According to \cite{D2M2}, the special fiber $\mathcal{X}_p$ of the minimal regular model $\mathcal{X}\to \Spec(\Z_p)$ of $X$ can be explicitly described from the so-called ``cluster picture'' attached to the roots of $F$ in an equation $X:y^2=F(x)$. If $X$ has semi-stable reduction, the special fiber of $\mathcal{X}$ is reduced, and consists of hyperelliptic curves $X_1,\dots,X_r$ (possibly singular) linked by chains of $\PP^1s$. The connected component $\mathcal{J}_p^0$ of $\mathcal{J}_p$ is in this case an extension of an abelian variety $B$ by a torus $T$, whose rank we denote by $t$. The well-known relation \cite{raynaud70} between $\mathcal{J}$ and the relative Picard functor of $\mathcal{X}$ implies that the abelian variety $B$ is the product of Jacobians of the $\tilde{X_i}$ (the desingularized $X_i$), and in particular we have $g=g(\tilde{X_i})+\dots +g(\tilde{X_i})+t$. By virtue of Theorem~\ref{thm:intro3}, and since $T[3]$ is an {\'e}tale group scheme of rank $3^t$ over $\F_p$, we deduce that
$$
|\mathcal{J}_p^0(\F_p)[3]| \leq  3^t \times p^{\frac{g-t}{2}+\gamma\frac{g-t}{\log g_{\min}}}
$$
where $\gamma$ is an absolute constant, and $g_{\min}:=\min_i g(\tilde{X_i})$.

The remaining contribution comes from the group of components $\Phi:=\mathcal{J}_p/\mathcal{J}_p^0$. Since the reduction is semi-stable, it is known at least since Grothendieck \cite[expos\'e~IX, 11.9, 11.11]{SGA7-1} that the {\'e}tale group scheme $\Phi$ is generated by at most $t$ elements. Therefore, $\Phi[3](\_p)$ has cardinality at most $3^t$. Putting everything together we deduce that, in the semi-stable case,
$$
|\mathcal{J}_p(\F_p)[3]| \leq 3^{2t}\times p^{\frac{g-t}{2}+\gamma\frac{g-t}{\log g_{\min}}}.
$$
We leave other cases to the interested reader.
\end{remark}

Previous authors have given bounds on the torsion subgroup of an abelian variety by taking advantage of bad reduction, an approach which seems orthogonal to ours. For example, Clark and Xarles \cite{CX2008} give torsion bounds for an abelian variety with purely additive reduction over a $p$-adic field. In \cite{Lorenzini2011}, Lorenzini studies the ratio between the product of the Tamagawa numbers and the torsion subgroup of an abelian surface.

\begin{remark}
\label{remark:char3}
Assume that $\car(k)=3$. Then Lemma~\ref{lemma:maintrick} still holds: a $3$-torsion point gives rise to a $k[t]$-integral point on some curve $\mathcal{E}_a:y^2=x^3+a(t)^2F(t)$. Now, since the characteristic of the base field is $3$, this curve $\mathcal{E}_a$ has arithmetic genus $1$ but geometric genus $0$; more precisely, it becomes rational over the field $k(\sqrt[3]{a^2F})$, hence has infinitely many integral points over this field. In short, $\mathcal{E}_a$ is not an elliptic curve and the strategy of our proof is irrelevant in this case.
\end{remark}

\subsection{Bounding $2$-torsion of Jacobians of trigonal curves over small finite fields}
\label{section5}

In this section we interchange the roles of $2$ and $3$, and prove analogues of results of the previous section for $2$-torsion of Jacobians of trigonal curves.

Let $q=p^r$ for some prime $p\geq 5$.
Let $X$ be a trigonal curve of genus $g$ over $\mathbb{F}_q$ with trigonal morphism $\pi:X\to\mathbb{P}^1$. We assume that there exists a totally ramified rational point $P_\infty\in X(\mathbb{F}_q)$ of $\pi$, in which case after an automorphism of $\mathbb{P}^1$ we may assume that $\pi^*(\infty)=3P_\infty$. Let $J=\Jac(X)$ be the Jacobian of $X$.
The main result of this section is Theorem~\ref{thm:intro4}: for some constant $\gamma$ depending only on $q$,
\begin{align*}
|J(\mathbb{F}_q)[2]|\leq (2q)^{\frac{g}{3}+\gamma\frac{g}{\log g}}.
\end{align*}

As in the previous section, we first relate $2$-torsion points of $J(\mathbb{F}_q)$ to integral points on certain elliptic curves over $\mathbb{F}_q(t)$.
\begin{lemma}
\label{lemma:maintrick2}
There exists a set $\mathbf{E}$ of elliptic curves over $\mathbb{F}_q(t)$ (depending on the trigonal curve $X$) with the following properties:
\begin{enumerate}
\item $|\mathbf{E}|=q^{\lceil \frac{g}{3}\rceil}-1$
\item Each elliptic curve $\E\in \mathbf{E}$ may be defined by a Weierstrass equation 
\begin{align*}
y^2=x^3+a_2(t)x^2+a_1(t)x+a_0(t), 
\end{align*}
where $a_i\in \mathbb{F}_q[t]$, $\deg a_i\leq \frac{2g}{3}(3-i)$, and $x^3+a_2(t)x^2+a_1(t)x+a_0(t)$
 is irreducible in $\mathbb{F}_q[t,x]$ and defines a nonsingular projective curve isomorphic to $X$ over $\mathbb{F}_q$ (and in particular of genus $g$).
\item There is a map
\begin{align*}
J(\mathbb{F}_q)[2]\setminus\{0\}\to \bigsqcup_{\E\in \mathbf{E}} \E(\mathbb{F}_q[t])_{\leq \frac{2g}{3}},
\end{align*}
which is at most $3$-to-$1$.
\label{item:injectivemap}
\end{enumerate}
\end{lemma}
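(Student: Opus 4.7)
My plan parallels that of Lemma~\ref{lemma:maintrick}: I will encode each non-zero $P \in J(\mathbb{F}_q)[2]$ as a function $\phi$ on $X$ with $\divisor(\phi) = 2D - 2g P_\infty$ for some effective $\mathbb{F}_q$-rational divisor $D$ of degree $g$, and then convert $\phi$ into an integral point of bounded degree on one of a finite family of elliptic curves built from shifted Weierstrass-type models of $X$.

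First, I will exhibit a Weierstrass-type form for $X$. Using that $p \geq 5$, that $\pi^{\ast}(\infty) = 3P_\infty$ is totally ramified, and a Riemann-Roch / Weierstrass-semigroup analysis at $P_\infty$, I will produce a function $\xi \in A := H^0(X\setminus\{P_\infty\}, \mathcal{O}_X)$ such that $\{1,\xi,\xi^2\}$ is an $\mathbb{F}_q[t]$-basis of $A$ and $\xi$ satisfies an irreducible monic cubic $\xi^3 + \alpha_2(t)\xi^2 + \alpha_1(t)\xi + \alpha_0(t) = 0$ with $\deg \alpha_i \leq \tfrac{2g}{3}(3-i)$; these bounds are forced by balancing pole orders at $P_\infty$. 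For each nonzero polynomial $s \in \mathbb{F}_q[t]$ with $\deg s < \lceil g/3 \rceil$, set $x := \xi - s$; direct expansion shows $x$ satisfies a cubic $x^3 + a_2(t) x^2 + a_1(t)x + a_0(t) = 0$ of the same shape, which yields an elliptic curve $\E_s : y^2 = x^3 + a_2 x^2 + a_1 x + a_0$. The collection $\mathbf{E} = \{\E_s\}$ has cardinality $q^{\lceil g/3 \rceil} - 1$.

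Given $P$, construct $\phi \in L(2g P_\infty)$ as above, unique up to $\mathbb{F}_q^{\times}$. The crucial step will be to show that, for a suitable choice of shift $s$ (there are $q^{\lceil g/3\rceil} \approx q^{g/3}$ of them, matching the dimension of the ambiguity in writing $\phi$ modulo squares), the class of $\phi$ in $\mathbb{F}_q(X)^{\times}/(\mathbb{F}_q(X)^{\times})^2$ is represented by $x_0 - x$ for some $x_0 \in \mathbb{F}_q[t]$ with $\deg x_0 \leq 2g/3$. Since $\Norm_{\mathbb{F}_q(X)/\mathbb{F}_q(t)}(\phi)$ is a square in $\mathbb{F}_q(t)^{\times}$ (because $\pi_{\ast}\divisor(\phi) = 2(\pi_{\ast} D - g\infty)$ is twice a principal divisor on $\PP^1$), one can then extract $y_0 \in \mathbb{F}_q[t]$ with $y_0^2 = \Norm(x_0 - x) = x_0^3 + a_2 x_0^2 + a_1 x_0 + a_0$, producing the required integral point $(x_0,y_0) \in \E_s(\mathbb{F}_q[t])_{\leq 2g/3}$. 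Finally, the map $P \mapsto (\E_s, x_0, y_0)$ will be at most $3$-to-one because the triple $(\E_s, x_0, y_0)$ recovers $\phi$ only up to the Galois action of the Galois closure of $\mathbb{F}_q(X)/\mathbb{F}_q(t)$ permuting the three sheets of $\pi$, which contributes a factor of at most $3$ at the level of classes modulo squares.

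I expect the main obstacle to be the middle step: arranging the shift $s$ so that $\phi$ admits the representation $x_0 - x$ modulo squares with $x_0 \in \mathbb{F}_q[t]$ of degree at most $2g/3$. This amounts to an explicit linear-algebra and Riemann-Roch manipulation inside the rank-$3$ $\mathbb{F}_q[t]$-module $A$, analogous in spirit to the polynomial arithmetic in Lemma~\ref{lemma:maintrick} but more delicate since the square-root extraction is not as immediate as in the hyperelliptic case.
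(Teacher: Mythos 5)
There is a genuine gap, and it sits exactly where you flag it: the ``crucial step'' of representing the class of $\phi$ by $x_0-x$ on one of your curves. The structural problem is that your family $\mathbf{E}=\{\E_s\}$ collapses: since $x=\xi-s$ with $s\in\mathbb{F}_q[t]$, the functions $x_0 - x = (x_0+s)-\xi$ that your construction can produce form the single affine subspace $\{w-\xi \mid w\in\mathbb{F}_q[t],\ \deg w\leq \lfloor 2g/3\rfloor\}$ of $L(2gP_\infty)$, of dimension $\lfloor 2g/3\rfloor+1$; the shift $s$ contributes nothing beyond what $x_0$ already provides, and all your $\E_s$ are translates of one Weierstrass model. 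Meanwhile $\phi$ ranges over the full $(g+1)$-dimensional space $L(2gP_\infty)$, so most $\phi$ are not of the form $w-\xi$, and there is no reason for $[\phi]=[w-\xi]$ to hold modulo squares either (that would require $(w-\xi)\phi$ to be a square in $\mathbb{F}_q(X)$ for some $w$, which fails generically). A secondary issue is that your very first step already asks for more than is available: a function $\xi$ with $\{1,\xi,\xi^2\}$ an $\mathbb{F}_q[t]$-basis of $H^0(X\setminus\{P_\infty\},\mathcal{O}_X)$ forces the Weierstrass semigroup at $P_\infty$ to be $\langle 3,g+1\rangle$, i.e.\ monogenicity of the affine ring over $\mathbb{F}_q[t]$, which is a nontrivial restriction on $X$.

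The paper's fix is to make the cubic generator itself the parameter rather than a polynomial shift of a fixed generator. One completes $1,t,\dots,t^{\lfloor 2g/3\rfloor}$ to a basis of $L(2gP_\infty)$ by functions $\psi_1,\dots,\psi_{\lceil g/3\rceil}$ spanning a complement $V$, and takes $\mathbf{E}=\{\E_\psi: y^2=F_\psi(x)\}_{\psi\in V\setminus\{0\}}$, where $F_\psi$ is the minimal cubic of $\psi$ over $\mathbb{F}_q(t)$ (irreducible because $[\mathbb{F}_q(X):\mathbb{F}_q(t)]=3$ is prime and $\psi\notin\mathbb{F}_q(t)$). Then \emph{every} $\phi\in L(2gP_\infty)$ decomposes exactly, not merely modulo squares, as $\phi=x_0(t)-\psi$ with $\deg x_0\leq\lfloor 2g/3\rfloor$ and $\psi\in V$ (and $\psi=0$ forces $P=0$), after which the norm computation $\Norm_{X/\PP^1}(x_0-\psi)=F_\psi(x_0)=cy_0^2$ and the $3$-to-$1$ count via the three roots of $F_\psi$ go through as you anticipated. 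If you want to salvage your write-up, replace the shift parameter $s$ by a parameter running over such a complement $V$; the rest of your outline (norm is a square because $\pi_*\divisor(\phi)$ is twice a principal divisor on $\PP^1$, recovery of $P$ up to the three sheets) is sound.
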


\begin{proof}

We first construct an appropriate set of elliptic curves $\mathbf{E}$ over $\mathbb{F}_q(t)$. By assumption, $\pi^*(\infty)=3P_\infty$, where $P_\infty\in X(\mathbb{F}_q)$.  We note that by Riemann-Roch, $h^0(2gP_\infty)=2g+1-g=g+1$. The map $\pi$ induces an inclusion of function fields $\mathbb{F}_q(\mathbb{P}^1)=\mathbb{F}_q(t)\subset \mathbb{F}_q(X)$, and viewed as a function on $X$ we have $\deg t=3$ and $1,t,\ldots, t^{\lfloor 2g/3\rfloor}\in L(2gP_\infty)$. Note that 
\begin{align*}
h^0(2gP_\infty)-(\lfloor 2g/3\rfloor+1)=g-\lfloor 2g/3\rfloor=\lceil g/3\rceil. 
\end{align*}
We complete $1,t,\ldots, t^{\lfloor 2g/3\rfloor}$ to a basis $1,t,\ldots, t^{\lfloor 2g/3\rfloor}, \psi_1,\ldots, \psi_{\lceil g/3\rceil}$ of $L(2gP_\infty)$ over $\mathbb{F}_q$. Let $V\subset \mathbb{F}_q(X)$ be the $\mathbb{F}_q$-vector space spanned by $\psi_1,\ldots, \psi_{\lceil g/3\rceil}$. Let $\psi\in V\setminus\{0\}$, and let $F_\psi\in \mathbb{F}_q(t)[x]$ denote the minimal polynomial of $\psi$ over $\mathbb{F}_q(t)$. By construction, $\psi\not\in \mathbb{F}_q(t)$, and since $\mathbb{F}_q(t)\subset \mathbb{F}_q(t,\psi)\subset \mathbb{F}_q(X)$ and $[\mathbb{F}_q(X):\mathbb{F}_q(t)]=3$ is prime, we must have $\mathbb{F}_q(t,\psi)=\mathbb{F}_q(X)$ and $\deg F_\psi=3$. Let
\begin{align*}
F_\psi=x^3+a_{2,\psi}x^2+a_{1,\psi}x+a_{0,\psi},
\end{align*}
where each coefficient $a_{i,\psi}$ is an appropriate symmetric polynomial in the conjugates of $\psi$ over $k(t)$. Since $\psi\in L(2gP_\infty)$ has poles only at $P_\infty$, it follows that $a_{i,\psi}\in \mathbb{F}_q[t]$, and looking at the order of poles we immediately find $\deg_t a_{i,\psi}\leq \frac{2g}{3}(3-i)$.  This can also be proved by considering the Newton polygon of $F_\psi$, which has a unique edge since $P_\infty$ is totally ramified. Note that $F_\psi$ is irreducible in $\mathbb{F}_q[t,x]$, and the (nonsingular projective) curve over $\mathbb{F}_q$ defined by $F_\psi=0$ is isomorphic to $X$. 

We let $\E_\psi$ be the elliptic curve over $k(t)$ given by the Weierstrass equation
\begin{align*}
y^2=F_\psi=x^3+a_{2,\psi}(t)x^2+a_{1,\psi}(t)x+a_{0,\psi}(t),
\end{align*}
and we let $\mathbf{E}=\{\E_\psi\mid \psi \in V\setminus \{0\}\}$. Since $\dim V=\lceil g/3\rceil$, we have $|\mathbf{E}|=q^{\lceil g/3\rceil}-1$. It remains only to prove \eqref{item:injectivemap}.

Let $P\in J(\mathbb{F}_q)[2]\setminus\{0\}$. We may write $P=[D-gP_\infty]$ for some effective divisor $D$ on $X$ of degree $g$, and then
\begin{align*}
2D-2gP_\infty=\divisor(\phi),
\end{align*}
for some rational function $\phi\in L(2gP_\infty)\subset k(X)$. Then from the definitions, we may write $\phi=x_0(t)-\psi$ for some  $x_0\in \mathbb{F}_q[t]$, $\deg x_0\leq \lfloor 2g/3\rfloor$, and $\psi\in V$. Moreover, $\psi=0$ easily implies that $x_0$ is a constant multiple of a square in $\mathbb{F}_q[t]$ and $P=[0]$. Therefore, we must have $\psi\in V\setminus\{0\}$. Taking norms, we find by the same argument as in the proof of Lemma~\ref{x1lem} that
\begin{align*}
\Norm_{X/\PP^1}(\phi)=\Norm_{X/\PP^1}(x_0(t)-\psi)=F_\psi(x_0(t)),
\end{align*}
and, on the other hand, $\Norm_{X/\PP^1}(\phi)= cy_0(t)^2$ 
for some $y_0\in \mathbb{F}_q[t]$ and $c\in \mathbb{F}_q^*$. Replacing $x_0$ by $cx_0$, $\psi$ by $c\psi$, and $y_0$ by $c^2y_0$, we may assume $c=1$. Then we obtain a point $(x_0,y_0)\in \E_\psi(k[t])_{\leq \frac{2g}{3}}$, where $\E_\psi\in \mathbf{E}$. Conversely, for a point $(x_0,y_0)\in \E_\psi(k[t])$ in the image of the map in \eqref{item:injectivemap}, note that we can recover the divisor class in $J(\mathbb{F}_q)[2]\setminus\{0\}$ as $\frac{1}{2}\divisor(x_0-\psi')$ for one of the three possible roots $\psi'$ of $F_\psi$ (and hence the map is at worst $3$-to-$1$).
\end{proof}

We now prove Theorem~\ref{thm:intro4}.

\begin{proof}[Proof of Theorem~\ref{thm:intro4}]
By Lemma~\ref{lemma:maintrick2},

\begin{align*}
|J(\mathbb{F}_q)[2]|-1\leq 3\sum_{\E\in \mathbf{E}}\left|\E(\mathbb{F}_q[t])_{\leq \frac{2g}{3}}\right|.
\end{align*} 

Taking $c=\lfloor\frac{2g}{3}\rfloor$ and $g_C=g$ in Theorem~\ref{generalMaronibound} (note the difference with the proof of Theorem~\ref{thm:intro3} in which the genus of the curve $C_a$ depends on the value of $a$), we find
\begin{align*}
\left|\E(\mathbb{F}_q[t])_{\leq \frac{2g}{3}}\right|\leq 2^{\frac{g+4}{3}+r_\E},
\end{align*} 
where $r_\E$ is the rank of $\E$ over $\mathbb{F}_q(t)$. Using Brumer's rank bound \eqref{eq:Brumer} and $|\mathbf{E}|\leq q^{g/3}-1$, we have that for some constant $\gamma$ depending only on $q$, 
\begin{align*}
|J(\mathbb{F}_q)[2]|\leq (2q)^{\frac{g}{3}+\gamma \frac{g}{\log g}}.
\end{align*} 
\end{proof}



\bibliographystyle{amsalpha}
\bibliography{biblioRP.bib}

\newcommand{\etalchar}[1]{$^{#1}$}
\providecommand{\bysame}{\leavevmode\hbox to3em{\hrulefill}\thinspace}
\providecommand{\MR}{\relax\ifhmode\unskip\space\fi MR }
\providecommand{\MRhref}[2]{%
  \href{http://www.ams.org/mathscinet-getitem?mr=#1}{#2}
}
\providecommand{\href}[2]{#2}
\begin{thebibliography}{DDMM23}

\bibitem[Bru92]{brumer92}
Armand Brumer, \emph{The average rank of elliptic curves. {I}. ({With} an
  appendix by {Ois{\'{\i}}n} {McGuinness}: {The} explicit formula for elliptic
  curves over function fields)}, Invent. Math. \textbf{109} (1992), no.~3,
  445--472 (English).

\bibitem[BST{\etalchar{+}}20]{bhargava17}
M.~Bhargava, A.~Shankar, T.~Taniguchi, F.~Thorne, J.~Tsimerman, and Y.~Zhao,
  \emph{Bounds on 2-torsion in class groups of number fields and integral
  points on elliptic curves}, J. Am. Math. Soc. \textbf{33} (2020), no.~4,
  1087--1099 (English).

\bibitem[CLT04]{CLT04}
W.-C. Chi, K.~F. Lai, and K.-S. Tan, \emph{Integral points on elliptic curves
  over function fields}, J. Aust. Math. Soc. \textbf{77} (2004), no.~2,
  197--208 (English).

\bibitem[Con06]{conrad06}
Brian Conrad, \emph{Chow's {{\(K/k\)}}-image and {{\(K/k\)}}-trace, and the
  {Lang}-{N{\'e}ron} theorem}, Enseign. Math. (2) \textbf{52} (2006), no.~1-2,
  37--108 (English).

\bibitem[CX08]{CX2008}
Pete~L. Clark and Xavier Xarles, \emph{Local bounds for torsion points on
  abelian varieties}, Can. J. Math. \textbf{60} (2008), no.~3, 532--555
  (English).

\bibitem[Dav65]{davenport65}
Harold Davenport, \emph{On {{\(f^3(t)-g^2(t)\)}}}, Norske Vid. Selsk. Forhdl.
  \textbf{38} (1965), 86--87 (English).

\bibitem[DDMM23]{D2M2}
Tim Dokchitser, Vladimir Dokchitser, C{\'e}line Maistret, and Adam Morgan,
  \emph{Arithmetic of hyperelliptic curves over local fields}, Math. Ann.
  \textbf{385} (2023), no.~3-4, 1213--1322 (English).

\bibitem[EV07]{EV}
Jordan~S. Ellenberg and Akshay Venkatesh, \emph{Reflection principles and
  bounds for class group torsion}, Int. Math. Res. Not. IMRN (2007), no.~1,
  Art. ID rnm002, 18. \MR{2331900}

\bibitem[Ful98]{Fulton98}
William Fulton, \emph{Intersection theory}, second ed., Ergebnisse der
  Mathematik und ihrer Grenzgebiete. 3. Folge. A Series of Modern Surveys in
  Mathematics [Results in Mathematics and Related Areas. 3rd Series. A Series
  of Modern Surveys in Mathematics], vol.~2, Springer-Verlag, Berlin, 1998.
  \MR{1644323}

\bibitem[GHL23]{GHL23}
Jean Gillibert, Emmanuel Hallouin, and Aaron Levin, \emph{Integral points on
  elliptic curves with $j$-invariant $0$ over $k(t)$}, Preprint,
  {arXiv}:2306.11353 [math.{AG}], 2023.

\bibitem[GL22]{gl18a}
Jean Gillibert and Aaron Levin, \emph{Descent on elliptic surfaces and
  arithmetic bounds for the {M}ordell-{W}eil rank}, Algebra Number Theory
  \textbf{16} (2022), no.~2, 311--333. \MR{4412575}

\bibitem[Gra65]{grauert65}
Hans Grauert, \emph{Mordells {Vermutung} {\"u}ber rationale {Punkte} auf
  algebraischen {Kurven} und {Funktionenk{\"o}rper}}, Publ. Math., Inst. Hautes
  {\'E}tud. Sci. \textbf{25} (1965), 363--381 (German).

\bibitem[GRR72]{SGA7-1}
Alexander Grothendieck, M.~Raynaud, and D.~S. Rim, \emph{S{\'e}minaire de
  {G{\'e}om{\'e}trie} {Alg{\'e}brique} {Du} {Bois}-{Marie} 1967--1969.
  {Groupes} de monodromie en g{\'e}om{\'e}trie alg{\'e}brique ({SGA} 7 {I}).
  {Dirig{\'e}} par {A}. {Grothendieck} avec la collaboration de {M}. {Raynaud}
  et {D}. {S}. {Rim}. {Expos{\'e}s} {I}, {II}, {VI}, {VII}, {VIII}, {IX}},
  Lect. Notes Math., vol. 288, Springer, Cham, 1972 (French).

\bibitem[HS88]{HS}
Marc Hindry and Joseph~H. Silverman, \emph{The canonical height and integral
  points on elliptic curves}, Invent. Math. \textbf{93} (1988), no.~2,
  419--450. \MR{948108}

\bibitem[HV06]{HV}
H.~A. Helfgott and A.~Venkatesh, \emph{Integral points on elliptic curves and
  3-torsion in class groups}, J. Amer. Math. Soc. \textbf{19} (2006), no.~3,
  527--550. \MR{2220098}

\bibitem[Kra77]{kramer77}
Kenneth Kramer, \emph{Two-descent for elliptic curves in characteristic two},
  Trans. Am. Math. Soc. \textbf{232} (1977), 279--295 (English).

\bibitem[Lan60]{lang1960}
Serge Lang, \emph{Integral points on curves}, Inst. Hautes \'{E}tudes Sci.
  Publ. Math. (1960), no.~6, 27--43. \MR{130219}

\bibitem[Lan78]{Lang78}
\bysame, \emph{Elliptic curves: {D}iophantine analysis}, Grundlehren der
  Mathematischen Wissenschaften, vol. 231, Springer-Verlag, Berlin-New York,
  1978. \MR{518817}

\bibitem[Liu02]{liu}
Qing Liu, \emph{Algebraic geometry and arithmetic curves}, Oxford Graduate
  Texts in Mathematics, vol.~6, Oxford University Press, Oxford, 2002,
  Translated from the French by Reinie Ern{\'e}, Oxford Science Publications.

\bibitem[LN10]{LN}
Herbert Lange and Peter~E. Newstead, \emph{Clifford indices for vector bundles
  on curves}, Affine flag manifolds and principal bundles, Trends Math.,
  Birkh\"auser/Springer Basel AG, Basel, 2010, pp.~165--202. \MR{3013031}

\bibitem[Lor11]{Lorenzini2011}
Dino Lorenzini, \emph{Torsion and {Tamagawa} numbers}, Ann. Inst. Fourier
  \textbf{61} (2011), no.~5, 1995--2037 (English).

\bibitem[Man63]{manin63}
Yu.~I. Manin, \emph{Rational points of algebraic curves over function fields},
  Izv. Akad. Nauk SSSR, Ser. Mat. \textbf{27} (1963), 1395--1440 (Russian).

\bibitem[Mas84]{mason84}
R.~C. Mason, \emph{Diophantine equations over function fields}, London
  Mathematical Society Lecture Note Series, vol.~96, Cambridge University
  Press, Cambridge, 1984. \MR{754559}

\bibitem[MS86]{MS}
G.~Martens and F.-O. Schreyer, \emph{Line bundles and syzygies of trigonal
  curves}, Abh. Math. Sem. Univ. Hamburg \textbf{56} (1986), 169--189.
  \MR{882414}

\bibitem[Pac98]{pacheco98}
Am{\'{\i}}lcar Pacheco, \emph{Integral points on elliptic curves over function
  fields of positive characteristic}, Bull. Aust. Math. Soc. \textbf{58}
  (1998), no.~3, 353--357 (English).

\bibitem[Paz22]{pazuki2022}
Fabien Pazuki, \emph{The regulator dominates the rank}, Arithmetic, geometry,
  cryptography, and coding theory, AGC2T. 18th international conference, Centre
  International de Rencontres Math\'ematiques, Marseille, France, May 31 --
  June 4, 2021, Providence, RI: American Mathematical Society (AMS), 2022,
  pp.~159--165 (English).

\bibitem[Pie05]{Pierce05}
L.~B. Pierce, \emph{The 3-part of class numbers of quadratic fields}, J. London
  Math. Soc. (2) \textbf{71} (2005), no.~3, 579--598. \MR{2132372}

\bibitem[Pie06]{Pierce06}
Lillian~B. Pierce, \emph{A bound for the 3-part of class numbers of quadratic
  fields by means of the square sieve}, Forum Math. \textbf{18} (2006), no.~4,
  677--698. \MR{2254390}

\bibitem[Ray70]{raynaud70}
M.~Raynaud, \emph{Sp{\'e}cialisation du foncteur de {Picard}}, Publ. Math.,
  Inst. Hautes {\'E}tud. Sci. \textbf{38} (1970), 27--76 (French).

\bibitem[Ray95]{raynaud95}
Michel Raynaud, \emph{Caract\'eristique d'{E}uler-{P}oincar\'e d'un faisceau et
  cohomologie des vari\'et\'es ab\'eliennes}, S\'eminaire {B}ourbaki, {V}ol.\
  9, Soc. Math. France, Paris, 1995, pp.~Exp.\ No.\ 286, 129--147. \MR{1608794}

\bibitem[Sam66]{samuel66}
P.~Samuel, \emph{Compl{\'e}ments {\`a} un article de {Hans} {Grauert} sur la
  conjecture de {Mordell}}, Publ. Math., Inst. Hautes {\'E}tud. Sci.
  \textbf{29} (1966), 311--318 (French).

\bibitem[Sch95]{schaefer95}
Edward~F. Schaefer, \emph{2-descent on the {Jacobians} of hyperelliptic
  curves}, J. Number Theory \textbf{51} (1995), no.~2, 219--232 (English).

\bibitem[Sil84]{silverman84}
Joseph~H. Silverman, \emph{The {S}-unit equation over function fields}, Math.
  Proc. Camb. Philos. Soc. \textbf{95} (1984), 3--4 (English).

\bibitem[Sil90]{Sil90}
\bysame, \emph{The difference between the {W}eil height and the canonical
  height on elliptic curves}, Math. Comp. \textbf{55} (1990), no.~192,
  723--743. \MR{1035944}

\bibitem[Sil94]{silvermanII}
\bysame, \emph{Advanced topics in the arithmetic of elliptic curves}, Graduate
  Texts in Mathematics, vol. 151, Springer-Verlag, New York, 1994. \MR{1312368}

\bibitem[Spe24]{spencer2025}
Harry Spencer, \emph{Wild conductor exponents of trigonal and tetragonal
  curves}, Preprint, {arXiv}:2409.12688v1 [math.{NT}], 2024.

\bibitem[SS08]{SS}
Matthias Sch\"utt and Andreas Schweizer, \emph{On {D}avenport-{S}tothers
  inequalities and elliptic surfaces in positive characteristic}, Q. J. Math.
  \textbf{59} (2008), no.~4, 499--522. \MR{2461271}

\bibitem[Vol90]{voloch90}
J.~F. Voloch, \emph{Explicit p-descent for elliptic curves in characteristic
  p}, Compos. Math. \textbf{74} (1990), no.~3, 247--258 (English).

\bibitem[Wei48]{Weil}
Andr\'e Weil, \emph{Sur les courbes alg\'ebriques et les vari\'et\'es qui s'en
  d\'eduisent}, Publications de l'Institut de Math\'ematiques de
  l'Universit\'e{} de Strasbourg [Publications of the Mathematical Institute of
  the University of Strasbourg], vol. 7 (1945), Hermann \& Cie, Paris, 1948,
  Actualit\'es Scientifiques et Industrielles, No. 1041. [Current Scientific
  and Industrial Topics]. \MR{27151}

\bibitem[Yud08]{Yudovina}
Elena Yudovina, \emph{Diophantine equations and congruences over function
  fields}, Proc. Amer. Math. Soc. \textbf{136} (2008), no.~11, 3839--3850.
  \MR{2425723}

\end{thebibliography}



\bigskip

\textsc{Jean Gillibert}, Universit\'e de Toulouse, Institut de Math{\'e}matiques de Toulouse, CNRS UMR 5219, 118 route de Narbonne, 31062 Toulouse Cedex 9, France.

\emph{E-mail address:} \texttt{jean.gillibert@math.univ-toulouse.fr}
\medskip

\textsc{Emmanuel Hallouin}, Universit\'e de Toulouse, Institut de Math{\'e}matiques de Toulouse, CNRS UMR 5219, 118 route de Narbonne, 31062 Toulouse Cedex 9, France.

\emph{E-mail address:} \texttt{hallouin@univ-tlse2.fr}
\medskip

\textsc{Aaron Levin}, Department of Mathematics, Michigan State University, 619 Red Cedar Road, East Lansing, MI 48824.

\emph{E-mail address:} \texttt{adlevin@math.msu.edu}


\end{document}